\numberwithin{equation}{section}
\providecommand{\U}[1]{\protect\rule{.1in}{.1in}}
\providecommand{\U}[1]{\protect \rule{.1in}{.1in}}
\newtheorem{theorem}{Theorem}[section]
\newtheorem{corollary}[theorem]{Corollary}
\newtheorem{definition}[theorem]{Definition}
\newtheorem{lemma}[theorem]{Lemma}
\newtheorem{proposition}[theorem]{Proposition}
\newtheorem{assumption}[theorem]{Assumption}
\newtheorem{remark}[theorem]{Remark}
\newenvironment{proof}[1][Proof]{\noindent \textbf{#1.} }{\  \rule{0.5em}{0.5em}}
\def\mR{{\mathbb R}}
\def\Gr{{\operatorname{Gr}}}
\begin{document}
	\title{Solutions and stochastic averaging for delay-path-dependent stochastic variational inequalities in infinite dimensions}
	\author{Ning Ning\thanks{Department of Statistics,
			Texas A\&M University, College Station, Texas, USA. Email: patning@tamu.edu}
	\and Jing Wu \thanks{School of Mathematics, Sun Yat-sen University,
		Guangzhou, China. Email: wujing38@mail.sysu.edu.cn}
		\and Xiaoyan Xu \thanks{School of Mathematics and Statistics, Jiangxi Normal University, Nanchang, China. Email: xuxy@jxnu.edu.cn}}
	\date{}
	\maketitle
	\begin{abstract}
In this paper, we study a very general stochastic variational inequality (SVI) having jumps, random coefficients, delay, and path dependence, in infinite dimensions. Well-posedness in terms of the existence and uniqueness of a  solution is established, and a stochastic averaging principle on strong convergence of a time-explosion SVI to an averaged equation is obtained, both under non-Lipschitz conditions. We illustrate our results on general but concrete examples of finite dimension and infinite dimension respectively, which cover large classes of particle systems with electro-static repulsion, nonlinear stochastic partial differential equations with jumps, semilinear stochastic partial differential equations (especially stochastic reaction–diffusion equations) with delays, and others.
	\end{abstract}
	
	\textbf{Key words}: Averaging principle, Path dependence, Poisson point process, Stochastic functional differential equation, Sub-differential operator, Well-posedness
	
	\textbf{MSC-classification}: 60H15, 35R60, 49J53, 60G20
	
	
	\section{Introduction}
\label{sec:Introduction}
In this section, we give the background and motivations in Section \ref{sec:Background},  summarize our contributions in Section \ref{sec:contributions}, 
illustrate our modeling strength with examples in Section \ref{sec:Modeling_strength_illustration}, 
followed with the organization of the paper in Section \ref{sec:Organization}.

\subsection{Background and motivations}
\label{sec:Background}
The stochastic variational inequality (SVI) is
a powerful modeling tool for many applied problems in which dynamics,
inequalities, and discontinuities are present, such as
constrained time-dependent physical systems with unilateral constraints,
differential Nash games, and hybrid engineering systems with variable structures, hence 
widely used in economics, finance, optimization, game theory, and others \citep{robinson1979generalized, robinson1982generalized}. 
They are stochastic differential equations (SDEs) with subdifferential operators. They generalize SDEs with reflections at the boundaries of convex domains. 
Wide interest in SVI has been attracted which includes, but is not limited to, the following: \cite{kree1982diffusion} proved the existence and uniqueness of a multivalued differential equation governing deterministic oscillations with bilinear hysteresis; the well-posedness of SVIs was established in \cite{bensoussan1994parabolic} and \cite{bensoussan1997stochastic} for  finite-dimensional spaces and infinite-dimensional spaces, respectively;
a splitting up method was developed in \cite{asiminoaei1997approximation} to investigate SVIs;
the multivalued approach developed in
\cite{cepa1998problame}
clarifies the connection between SVIs and the deterministic Skorohod problem;
\cite{zualinescu2002second} studied a
class of Hamilton-Jacobi-Bellman inequalities by considering an optimal stochastic
control problem with an SVI as the state equation; \cite{ren2012regularity} analyzed the regularity of invariant measures of multivalued
SDEs; \cite{ren2010exponential} investigated the ergodicity property for the transition semigroup of solutions to SVIs; \cite{ren2016approximate} explored approximating continuity and the support theory of general reflected SDEs. 

Stochastic modeling with dependence on history is appropriate and desired in many scientific areas, such as chemistry \citep{brett2013stochastic}, biology \citep{miekisz2011stochastic}, ecology \citep{mao2005stochastic}, and finance \citep{arriojas2007delayed}. To model the  after-effect, time-lag, or time-delay phenomena, stochastic functional differential equations (SFDEs) were introduced and have been playing a significant role in modeling evolutions of dynamical systems where uncertainty affects the current situation not only through the current state but also through the history
(see, e.g. \cite{ruess1996existence,wu1996theory,taniguchi2002existence,caraballo2004attractors, jahanipur2010stochastic, barbu2014existence,bao2016asymptotic}). Given that SDEs in restricted regions have wide applications in finance (see, e.g. \cite{han2016optimal}) and \cite{ning2021well} proved the first well-posedness of path-dependent multi-dimensional SVIs. 

Another factor common in stochastic modeling is jumps, which are widely used in finance, economics, science,   engineering and others, anywhere the underlying randomness containing abrupt changes (see, e.g., \cite{corcuera2005completion}). For example in finance, decisions of the Federal
Reserve and other central banks can cause the stock price to
make a sudden shift, and hence one would like to represent the stock price
by a diffusion process that has jumps (see, e.g. \cite{Bass2004Stochastic} and the references therein). We refer interested readers to \cite{menaldi1985reflected, ren2011multi, wu2012wiener, ren2013optimal, briand2020mean, wang2021reflected, popovic2022large, qian2022reflected} and the references therein for analysis of SVIs and reflected SDEs with jumps.

The study of random media has had a rapid development during at least the last thirty years, whose original motivation came from fields such as condensed matter physics, physical chemistry, geology, and others. It has pervaded almost all fields of probability theory and random models of different kinds, and its typical research approach consists of the inclusion of a random variable in the coefficients of the SDE (see, e.g. \cite{bayraktar2019controlled}). In a pioneer work \citep{kohatsu1997stochastic}, the SDE with the random coefficient of drift and the nonrandom coefficient of diffusion was considered, and it was shown that the coefficient of drift should be linearly bounded with probability one and should satisfy the Lipschitz condition with a random constant. Later, theoretical results of extended models were established, such as 
\cite{alos1999stochastic, hausenblas2007spdes, zubchenko2011properties, kulinich2014strong, makhno2018stochastic}.

In this paper, we aim to introduce jumps and random coefficients to path-dependent SVIs, extending the state space from finite-dimensional spaces to separable Hilbert spaces of finite-dimension or infinite-dimension, and generalizing the history dependence to a new level by incorporating both delay and path-dependence. For such a general SVI, the most crucial question is whether it can be properly defined in terms of the existence and the uniqueness of its solutions. 
Upon establishing the well-posedness, 
the second subject we are concerned with is the averaging principle, which is very efficient when one studies dynamical systems with highly oscillating components. That is, the highly oscillating components can be averaged
to produce an averaged equation, which, compared with the original one, is much easier to analyze over long time scales. The method of averaging principle is handy in capturing the main
behaviors of the dynamical systems while bypassing the challenges caused by the fast-oscillating components of the systems, and thus has been applied successfully in various models (see, e.g., \cite{xu2014averaging, mao2016averaging,dong2018averaging,hu2019strong, mao2019averaging,rockner2021averaging}).

\subsection{Our contributions}
\label{sec:contributions}
Consider a Gelfand triple
$$
\mathbb{V}\subset\mathbb{H}\cong\mathbb{H}^{*}\subset\mathbb{V}^{*},
$$
i.e., $(\mathbb{H}, \langle\cdot,\cdot\rangle_{\mathbb{H}} )$ is a separable Hilbert space, identified with its dual space by the Riesz isomorphism, and $(\mathbb{V},\|\cdot\|_{\mathbb{V}})$ is a Banach space continuously and densely embedded into $\mathbb{H}$. 
Let $\mathbb{K}$ be another separable Hilbert space and $L(\mathbb{K},\mathbb{H})$ denote the space of all bounded linear operators from $\mathbb{K}$ to $\mathbb{H}$. For $Q\in L(\mathbb{K},\mathbb{K})$ being any nonnegative self-adjoint operator, denote $L_{2}^{0}(\mathbb{K},\mathbb{H})$ as the space of all $\Lambda\in L(\mathbb{K},\mathbb{H})$ such that $\Lambda\sqrt{Q}$ is a Hilbert-Schmidt operator and the trace $\text{tr}(\Lambda Q\Lambda^{*})<\infty$, with the norm given by
$\|\Lambda\|_{L_{2}^{0}}^{2}:=\text{tr}(\Lambda Q\Lambda^{*}).
$
Let $\mathbb{U}$ be a separable Hilbert space and $(\mathbb{U},\mathcal{B}(\mathbb{U}))$ be a measurable space with the norm $\|\hspace{0.1mm}\cdot\hspace{0.1mm}\|_{\mathbb{U}}$.
We consider $(\Omega,\{\mathcal{F}_t\}_{t\geq 0},\mathcal{F},\mathbb{P})$ as a complete probability space and  $P(t)$ as a stationary $\{\mathcal{F}_t\}$-measurable Poisson point process valued in $\mathbb{U}$ with L\'evy  measure $\nu$, and $N$ as the Poisson random  measure  induced by $P(t)$, and $\widetilde{N}$ as the compensated Poisson random measure. We consider $W$ as a cylindrical Wiener process with a covariance operator $Q$ on a separable Hilbert space $\mathbb{K}$; see page $341$ of \cite{taniguchi2010existence} for details.

We consider the following delay-path-dependent SVI with jumps and random coefficients:
\begin{align}\label{see}
	\begin{cases}
		dX(t)\in 
		\sigma(t,X_{t})dW(t)+b(t,X_{t})dt+\int_{U}f(t,X_{t},u)\widetilde{N}(dt,du) \\
		\hspace*{1.5cm}+A(t,X(t))dt-\partial\varphi(X(t))dt, \hspace*{2.6cm}t\in [0,T],\\
		X(t)=\phi(t), \hspace*{6.5cm}t\in[-h,0].
	\end{cases}
\end{align}
Here, $\phi: [-h,0]\times\Omega\to \mathbb{V}$ satisfying that $\phi(t)\in\mathcal{F}_0$ for every $t\in[-h,0]$ and  $\phi(\cdot,\omega)$ is c\`{a}dl\`{a}g for every $\omega$, and the path-dependence is modeled through
$$X_{t}(r):=\begin{cases}
	X(t-a(t)),\quad &\mbox{if} ~~r\in[-h, t-a(t)],\\
	X(r),\quad &\mbox{if} ~~r\in[t-a(t), t],\\ 
	X(t),\quad &\mbox{if} ~~r\in[t, T],
\end{cases}$$
where $a$ is a continuous function  mapping  from  $[0,T]$  to $[0,h]$ satisfying $a(0)=h$. If $a(t)\equiv \gamma_0$, then our delayed-path-dependent modeling is a typical delay modeling (see, e.g., \cite{zhou2012reflected, coulibaly2020backward}). If $a(t)=\iota t$ for $0\leq \iota \leq 1$, then our delayed-path-dependent modeling is a path-dependent modeling. In particular, when $a(t)=t$, $X_{t}=X({t\wedge\cdot})$ the path of $X$ up to time $t$, which is a general path-dependent modeling covering various financial mathematics models
(see, e.g., \cite{ning2021well}). The case $a(t)=0$ would erase path-dependence.
The random drift coefficient
$$b:[0,+\infty)\times\mathcal{D}([-h,T];\mathbb{H})\times\Omega\rightarrow\mathbb{H}$$
and the random diffusion coefficient
$$\sigma:[0,+\infty)\times \mathcal{D}([-h,T];\mathbb{H})\times\Omega\rightarrow L^{0}_{2}(\mathbb{K},\mathbb{H})$$ are
progressively measurable, where $\mathcal{D}([-h,T];\mathbb{H})$ denotes the space of all c\`{a}dl\`{a}g functions from $[-h,T]$ to $\mathbb{H}$.  The random function  in the jump component 
$$f:[0,+\infty)\times\mathcal{D}([-h,T];\mathbb{H})\times(\mathbb{U}\backslash\{0\})\times\Omega\rightarrow \mathbb{H}$$ 
is predictably measurable. We consider $A(t,\cdot): \mathbb{V}\to\mathbb{V}^{*}$ as a linear or nonlinear operator. For a convex and lower-semicontinuous  function  $\varphi:\mathbb{H}\rightarrow(-\infty,+\infty]$, the sub-differential operator $\partial\varphi$ is defined as 
\begin{align}
	\label{eqn:subdifferential_def}
	\partial\varphi(x):=\Big\{y\in\mathbb{H};\;\varphi(x)\le\varphi(x')+\langle x-x',y\rangle_{\mathbb{H}},\ \forall x'\in\mathbb{H}\Big\}.
\end{align}

Our first main result (Theorem \ref{wellposed}) is the well-posedness, in terms of the existence and uniqueness of a solution in the following form (see Definition \ref{Def-1} for details):
\begin{align*}
	X(t)=&\phi({0})+\int_{0}^{t}\sigma(s,X_{s})dW(s)+\int_{0}^{t}b(s,X_{s})ds+\int_{0}^{t}A(s,X(s))ds\\
	&+\int_{0}^{t}\int_{U}f(s,X_{s},u)\widetilde{N}(ds,du)-\eta(t),
\end{align*}
where $\eta$  is a process of finite variation. For this general model in infinite dimension, we provide a new general framework to establish its well-posedness with non-Lipschitzian random coefficients,  under conditions that are even weaker than the existing results of less general models \citep{taniguchi2010existence,zhang2007skorohod}.
Recently,  \cite{ning2023Multi} obtained the well-posedness of path-dependent forward-backward SVIs using approximations generated by the Euler approximation approach. Unfortunately, that approach fails for  Eqn.  \eqref{see}. 
Then we first consider a simplified SVI that removes all the path dependence in Eqn.  \eqref{see_omega0}, and obtain the properties of its unique solution in Theorem \ref{wellposed1}. Next, we resort to successive approximation by designing a family of SVIs $\{X^{n}(t)\}_{n\in \mathbb{N}}$ 
in Eqn. \eqref{see7}. By Theorem \ref{wellposed1}, under Assumption \ref{assumption}, there exists a pathwise unique solution $(X^{n}(t),\eta^{n}(t))$ of  Eqn.  \eqref{see7}. At last, in Theorem \ref{wellposed}, we show that $\{X^{n}(t)\}_{n\in \mathbb{N}}$ is  Cauchy in the sense of uniform convergence in probability in $\mathcal{D}([-h,T];\mathbb{H})\cap L^{2}([-h,T];\mathbb{V})$ and the limit process is the unique solution of Eqn. \eqref{see}. 

Our second main result (Theorem \ref{thm:averaging}) is the averaging principle which says that the following delay-path-dependent SVI: 
\begin{equation}\label{eqn:everage_p}
	\begin{cases}
		dX^{\epsilon}(t)\in
		\sigma\Big(\frac{t}{\epsilon},X^{\epsilon}_{t}\Big)dW(t)+ b\Big(\frac{t}{\epsilon},X^{\epsilon}_{t}\Big)dt+\int_{U}f\Big(\frac{t}{\epsilon},X^{\epsilon}_{t},u\Big)\widetilde{N}(dt,du)\\
		\hspace*{1.5cm}+A\Big(X^{\epsilon}(t)\Big)dt-\partial\varphi(X^{\epsilon}(t))dt,\hspace*{2.6cm}t\in[0,T],\\
		X^{\epsilon}(t)=\phi(t),\hspace*{6.5cm} t\in[-h,0],
	\end{cases}
\end{equation}
will converge to the averaged equation below:
\begin{equation}\label{05}
	\begin{cases}
		d\overline{X}(t)\in
		\overline{\sigma}(\overline{X}_{t})dW(t)+\overline{b}(\overline{X}_{t})dt+\int_{U}\overline{f}(\overline{X}_{t},u)\widetilde{N}(dt,du)\\
		\hspace*{1.5cm}+{A}(\overline{X}(t))dt-\partial\varphi(\overline{X}(t))dt,\hspace*{2.85cm} t\in[0,T],\\
		\overline{X}(t)=\phi(t), \hspace*{6.5cm} t\in[-h,0],
	\end{cases}
\end{equation}
as the parameter $\epsilon\in(0,1)$ goes to zero, where $\overline{b}, ~\overline{\sigma}, ~\overline{f}$ are time-averaged functions of $b, ~\sigma, ~f$ given in Section \ref{sec:main_results_stochasticaveragingprinciple}. This is the first time that the averaging principle is established on path-dependent SVIs. 
Although the averaging principle has attracted intensive attention, there are very limited results on general SVIs. Very recently, \cite{chen2022averaging} proved the averaging principle for general SVIs with non-Lipschitz coefficients. But their model is not path-dependent, does not have jumps and stochastic coefficients, and is of finite dimension. Here, we show that $X^{\epsilon}$ can be approximated in the sense of convergence in probability and in the mean square, under  non-Lipschitz assumptions that generalize but weaker than those enforced in  \cite{mao2019averaging,mao2016averaging,xu2014averaging}. Moreover, a challenge that is evident in existing literature, revolves around the oversight of time dependence in the state variable when applying Lipschitz-like assumptions. To address this issue within our framework, we partition the time interval $[0,T]$ into intervals of equal length $\Delta=\epsilon\theta(\epsilon)$, where $\theta(\epsilon)\to\infty$ and $\Delta\to0$ as $\epsilon\to0$. We meticulously manage the time dependence within each interval $[i\Delta, (i+1)\Delta)$. Through the strong convergence established in Theorem \ref{thm:averaging}, supported by the moment estimates outlined in Theorem \ref{thm:moment_estimates}, we are able to conduct a comprehensive analysis of the averaged equation \eqref{05}. 

\subsection{Modeling strength illustration}
\label{sec:Modeling_strength_illustration}
The state space of our model  \eqref{see} is a  separable Hilbert space which is very general. In Section \ref{sec:examples}, we provide two general but concrete examples of  finite-dimensional and infinite-dimensional spaces, respectively. 

In the first example provided in Section \ref{sec:examples1}, we set $\mathbb{V}=\mathbb{H}=\mathbb{V}^{*}=\mathbb{R}^{d}$ and consider the delay-path-dependent SVI in the form of Eqn. \eqref{eqn:example1}. These two specifications simplify our general well-posedness result to Theorem \ref{thm:wellposed_example1}. Now, we provide an important example to visualize one of the modeling strengths of the subdifferential operator. For $x=(x^1,\cdots,x^d)\in\mR^d$ where $d\geq2$, and for $\lambda>0$, let
\begin{align}
	\label{eqn:special_form}
	\varphi(x)=\begin{cases}
		-\lambda\sum_{1\leq i<j\leq d}\ln(x^j-x^i), \quad& ~~x^1<x^2<...<x^d,\\
		+\infty, & ~~\text{otherwise}.
	\end{cases}
\end{align}
Thus $\varphi$ is a convex, proper, lower semicontinuous function with $$D(\partial\varphi)=\Big\{x\in\mR^d;\, x^1<x^2<\cdots<x^d\Big\}.$$ 
Then, as a specical case of Eqn. \eqref{eqn:example1}, the following finite-dimensional particle system with electro-static repulsion:
\begin{align*}
	dX^i(t)={b}(X_t^i)+{\sigma}(X_t^i)d W(t)+\int_U{f}(X_t^i,u)\widetilde{N}(dt,du)+\lambda\sum_{1\leq i<j\leq d}\frac{dt}{X^j(t)-X^i(t)},
\end{align*}
is the generalization with path-dependence and jumps (whose well-posedness is unknown) of interacting diffusing particle systems that have been extensively studied in many classical works (for instances, \cite{sznitman1991topics}, \cite{rogers1993interacting} and references therein). In particular, \cite{cepa1997diffusing} applied subdifferential operators and SVIs to describe the particle system with possible collisions between particles. Our extensions are necessary for interacting particle systems with electro-static repulsion as empirically verified in \cite{nieh2005spontaneously,stylianopoulos2010diffusion,poater2022path}. 

It worths mentioning that the subdifferential $\partial \varphi:\mathbb{R}\to 2^{\mathbb{R}}$ is not only multivalued, but also maximal monotone, and conversely, every multivalued maximal monotone operator on $\mathbb{R}$ is of the subdifferential form of a lower semicontinuous function on $\mathbb{R}$ (\cite{barbu2016stochastic}). In this respect, the subdifferential operator is closely connected to the porous medium operator in porous media equations. The porous media equation is a versatile mathematical model that captures various physical phenomena, such as the flow of an ideal gas and the diffusion of a compressible fluid through porous media. It also finds applications in studying thermal propagation in plasma and plasma radiation. Additionally, the porous media equation plays a significant role in modeling self-organized criticality processes, commonly known as the ``sand-pile model'' or the ``Bak-Tang-Wiesenfeld model''.
Stochastic perturbations of the porous media equation have been extensively utilized by physicists. Mathematicians have dedicated substantial efforts to rigorously investigate the existence of these equations, leading to valuable mathematical results. We refer to \cite{barbu2016stochastic} for theoretical analysis of stochastic porous media equations (SPMEs), while \cite{al2023effective} offers insights into physical science particularly regarding path-dependent features. \cite{LIU2014158} extended the classical results of SPMEs to multivalued SPMEs with jumps, by introducing a multivalued maximal monotone operator from $\mathbb{V}$ to $\mathbb{V}^*$.  

In the second example provided in Section \ref{sec:examples2}, we consider infinite-dimensional spaces
$$\mathbb{V}=W_{0}^{1,2}(O)\subset\mathbb{H}=L^{2}(O)\subset W^{-1,2}(O)=\mathbb{V}^{*},$$
where $O$ is an open bounded domain in $\mathbb{R}^{d}$, $W^{k,2}(O)$ is the standard Sobolev space on $O$,
and $W_{0}^{k,2}$ is the closure of $C_0^{\infty}(O)$ (the set of all infinitely differentiable real-valued functions on $O$ with compact support) with respect to the Sobolev norm on $O$. We consider a general nonlinear stochastic partial differential equation (SPDE) in Eqn. \eqref{eqn:SPDE}.
This example generalizes the SPDE with jumps in Example 2.3 of \cite{zhen2011stochastic}, and the multivalued SPDE discussed in Section 6 of \cite{zhang2007skorohod}. In addition,  this example includes the classical stochastic reaction-diffusion equations and serves as their generalization to nonlinear SPDEs with path-dependence and jumps. A simplified well-posedness result of Theorem \ref{wellposed} is provided in Theorem \ref{thm:wellposed_example2}, where its proof is dedicated to proving the reduced assumptions.

\subsection{Organization of the paper}
\label{sec:Organization}
The rest of the paper proceeds as follows. In Section \ref{sec:main_results}, we present our main results: the existence and uniqueness of a strong solution of the SVI (\ref{see})  and an averaging principle for the SVI \eqref{eqn:everage_p}. In Section \ref{sec:examples}, we illustrate our results using two general but concrete examples of finite and infinite dimension in Sections \ref{sec:examples1} and \ref{sec:examples2}, respectively. Proofs of Section \ref{sec:main_results_wellposed} are provided in the Appendix. Throughout the paper, the letter $C$, with or without subscripts, will denote a positive constant that may change in different occasions.

\section{Main results}	
\label{sec:main_results}
This section covers the main theoretical results of this paper, starting with some preliminaries in Section \ref{sec:main_results_Preliminaries}, then the well-posedness of a strong solution 
in Section \ref{sec:main_results_wellposed}, 
and ending with the stochastic averaging principle in Section \ref{sec:main_results_stochasticaveragingprinciple}. 	

\subsection{Preliminaries}
\label{sec:main_results_Preliminaries}

By the definition of the sub-differential operator $\partial\varphi$ provided in Eqn. \eqref{eqn:subdifferential_def}, it is a set-valued operator and its domain is defined as 
$$D(\partial\varphi):=\big\{x\in\mathbb{H};\;\partial\varphi(x)\neq\emptyset\big\}.$$
It is characterized by its graph
\begin{align*} \Gr(\partial\varphi):=\big\{(x,y)\in\mathbb{H}\times\mathbb{H};\; x\in D(\partial\varphi),\,y\in\partial\varphi(x)\big\}.
\end{align*}
It is multivalued, monotone and maximal monotone in the sense that for any $(x_{1},y_{1})\in \mathbb{H}\times\mathbb{H}$, one has $(x_{1},y_{1})\in \Gr(\partial\varphi)$ if
\begin{align*}
	\langle x_{1}-x_{2},y_{1}-y_{2}\rangle_{\mathbb{H}}\geq 0,\qquad\forall (x_{2},y_{2})\in \Gr(\partial\varphi).
\end{align*}
It is common to assume that $0\in\operatorname{Int}(D(\partial\varphi))$ and $\varphi(x)\geq\varphi(0)=0$ for any $x\in\mathbb{H}$, which will be the case of this paper. The following lemma from \cite{barbu1976nonlinear} provides important properties of $\partial\varphi$.
\begin{lemma} Denote the interior of $D(\partial\varphi)$ as $\operatorname{Int}(D(\partial\varphi))$ and denote the closure of $D(\partial\varphi)$ as $\overline{D(\partial\varphi)}$. The following properties hold:
	\begin{enumerate}
		\item $\operatorname{Int}(D(\partial\varphi))$ and $\overline{D(\partial\varphi)}$ are both convex subsets of $\mathbb{H}$.
		\item For each $x\in D(\partial\varphi)$, $\partial\varphi(x)$ is a closed and convex subset of $\mathbb{H}$.
		\item If $x_{0}\in\operatorname{Int}(D(\partial\varphi))$, then there exists a neighborhood $V$ of $x_{0}$ such that $\big\{\partial\varphi(x);\; x\in V\cap D(\partial\varphi)\big\}$ is a bounded subset of $\mathbb{H}$.
	\end{enumerate}
\end{lemma}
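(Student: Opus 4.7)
The plan is to handle the three items in the order of increasing difficulty, relying on standard convex analysis of proper lower semicontinuous (l.s.c.) convex functions on a Hilbert space.

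Part (2) is the cleanest and I would dispatch it first. Fix $x \in D(\partial\varphi)$. From \eqref{eqn:subdifferential_def}, $y \in \partial\varphi(x)$ iff $\langle x - x', y\rangle_{\mathbb{H}} \leq \varphi(x') - \varphi(x)$ for every $x' \in \mathbb{H}$. For each $x'$ with $\varphi(x') < \infty$ this carves out a closed affine half-space in the variable $y$; for $x'$ with $\varphi(x') = \infty$ the constraint is vacuous. Hence $\partial\varphi(x)$ is an intersection of closed half-spaces, and is therefore closed and convex.

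For part (1), I would first show directly that $D(\varphi) := \{x \in \mathbb{H} : \varphi(x) < \infty\}$ is convex from the convexity of $\varphi$. The bridge to $D(\partial\varphi)$ is a classical density theorem: $D(\partial\varphi)$ is dense in $D(\varphi)$, proved by showing that the resolvents $J_\lambda := (I + \lambda \partial\varphi)^{-1}$ satisfy $J_\lambda x \to x$ as $\lambda \downarrow 0$ for $x \in \overline{D(\varphi)}$ (Yosida regularization). This yields $\overline{D(\partial\varphi)} = \overline{D(\varphi)}$, which is convex as the closure of a convex set. The same circle of ideas gives $\operatorname{Int}(D(\partial\varphi)) = \operatorname{Int}(D(\varphi))$: the containment $\subseteq$ is immediate, and the reverse uses part (3) applied at interior points of $D(\varphi)$ to exhibit an element of $\partial\varphi(x)$ for each such $x$. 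Hence the interior is convex too.

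Part (3) is the heart of the matter, with three substeps. First, establish that $\varphi$ is bounded above on some neighborhood of $x_0$: since $x_0 \in \operatorname{Int}(D(\varphi))$, for some $r > 0$ the level sets $F_n := \{x \in \overline{B(x_0, r)} : \varphi(x) \leq n\}$ are closed by lower semicontinuity and cover $\overline{B(x_0, r)}$, so by Baire category some $F_{n_0}$ has nonempty interior and $\varphi \leq n_0$ on some ball $B(z, \rho)$. A reflection-convexity argument (apply $\varphi(x_0) \leq \tfrac12 \varphi(w) + \tfrac12 \varphi(2x_0 - w)$ for $w$ in a sub-ball and iterate, shrinking radii) transfers this into a uniform bound $\varphi \leq M$ on a ball $B(x_0, r_0)$ centered at $x_0$. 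Second, a standard convexity estimate forces $\varphi$ to be Lipschitz on $B(x_0, r_0/2)$ with constant $L$ of order $M/r_0$. Third, for any $x \in B(x_0, r_0/2) \cap D(\partial\varphi)$ and $y \in \partial\varphi(x)$, the choice $h = \delta\, y / \|y\|_{\mathbb{H}}$ with $\delta$ small in the subgradient inequality gives $\delta \|y\|_{\mathbb{H}} \leq \varphi(x+h) - \varphi(x) \leq L\delta$, so $\|y\|_{\mathbb{H}} \leq L$, as required. The main obstacle will be the first substep: invoking Baire category in an infinite-dimensional Hilbert space and then symmetrizing the resulting off-center bound into one centered at $x_0$, which requires a careful iterative choice of radii since the reflected point $2x_0 - w$ may initially leave the ball on which one has control.
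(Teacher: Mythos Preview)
The paper does not prove this lemma at all; it simply attributes it to \cite{barbu1976nonlinear} and moves on. Your proposal is a correct and essentially complete reconstruction of the standard convex-analysis proof one finds in Barbu's monograph (or in Br\'ezis, \emph{Op\'erateurs maximaux monotones}). The only places one might tighten the write-up are: (i) in part~(1), the logical dependence on part~(3) should be stated as ``the argument for (3) actually establishes the stronger fact that $\partial\varphi(x)\neq\emptyset$ and is bounded on a neighborhood of any $x_0\in\operatorname{Int}(D(\varphi))$,'' so that the implication $\operatorname{Int}(D(\varphi))\subseteq D(\partial\varphi)$ is not circular; and (ii) in part~(3), the ``reflection/iteration'' step can be replaced by the cleaner observation that a convex l.s.c.\ function bounded above on \emph{some} nonempty open set is automatically continuous on all of $\operatorname{Int}(D(\varphi))$, which immediately gives local boundedness at $x_0$ without tracking radii. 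Neither point is a gap; both are presentation choices.
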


The dualization between $\mathbb{V}$ and its dual space $\mathbb{V}^{*}$ is denoted by ${}_{\mathbb{V}}\langle\cdot,\cdot\rangle_{\mathbb{V}^{*}}$. For 
$(\mathbb{S},\|\hspace{0.1mm}\cdot\hspace{0.1mm}\|_{\mathbb{S}})$ being a normed space and for $p\geq1$, we denote  $\mathcal{M}^{p}([0,T]\times\Omega\,;\,\mathbb{S})$ as the space of progressively measurable functions $h:[0,\infty)\times\Omega\rightarrow\mathbb{S}$ satisfying that 
\begin{align*}
	\mathbb{E}\int_{0}^{T}\|h(s,\omega)\|_{\mathbb{S}}^{p}ds<\infty.
\end{align*}
Let  $\mathcal{M}^{p,\nu}([0,\infty)\times\mathbb{U}\times\Omega\,;\,\mathbb{H})$ denote the space of predictably measurable functions $f:[0,\infty)\times(\mathbb{U}\backslash\{0\})\times\Omega\rightarrow\mathbb{H}$ such that, for $U\in\mathcal{B}(\mathbb{U}\backslash\{0\})$,
\begin{align*}
	\mathbb{E}\int_{0}^{T}\int_{U}\|f(s,u,\omega)\|_{\mathbb{H}}^{p}\nu(du)ds<\infty.
\end{align*}

To estimate the jump component, we will need the Burkh\"older inequality of the following type, with the case that $p\geq 2$ given in \cite{cao2005successive} and the case that $p=1$ presented in  \cite{hausenblas2007spdes}.
\begin{lemma}\label{lem2}
	Let $p=1$ or $2$. Assume that the function $f\in \mathcal{M}^{p,\nu}([0,t]\times\Omega\times\mathbb{U};\mathbb{H})$. Then
	\begin{align*}
		\mathbb{E}\sup_{0\le s\le t}\left\|\int_{0}^{s}\int_{U}f(r,u)\widetilde{N}(dr,du)\right\|_{\mathbb{H}}^{p}\leq C\mathbb{E}\int_{0}^{t}\int_{U}\|f(s,u)\|_{\mathbb{H}}^{p}\nu(du)ds.	
	\end{align*}
\end{lemma}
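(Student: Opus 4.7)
The plan is to treat the two cases $p=2$ and $p=1$ separately, since the natural tools (Itô isometry versus Burkholder--Davis--Gundy) differ. In both cases, the integrand is $\mathbb{H}$-valued, so I would rely on Hilbert-space versions of the classical martingale inequalities rather than computing componentwise, and throughout I would set $M_s := \int_0^s\int_U f(r,u)\,\widetilde{N}(dr,du)$, which under the hypothesis $f\in\mathcal{M}^{p,\nu}$ defines an $\mathbb{H}$-valued c\`adl\`ag martingale.

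For $p=2$, the first step is Doob's maximal inequality in Hilbert space, giving $\mathbb{E}\sup_{0\le s\le t}\|M_s\|_{\mathbb{H}}^2\le 4\,\mathbb{E}\|M_t\|_{\mathbb{H}}^2$. The second step is the It\^o isometry for the stochastic integral against a compensated Poisson random measure,
\begin{equation*}
\mathbb{E}\|M_t\|_{\mathbb{H}}^2=\mathbb{E}\int_0^t\int_U\|f(r,u)\|_{\mathbb{H}}^2\,\nu(du)\,dr,
\end{equation*}
which yields the claim with $C=4$. The isometry is obtained from the definition of the stochastic integral as an $L^2$-limit of simple integrands, so no further work is required.

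For $p=1$, the approach is to invoke the Burkholder--Davis--Gundy inequality for $\mathbb{H}$-valued purely discontinuous martingales, $\mathbb{E}\sup_{0\le s\le t}\|M_s\|_{\mathbb{H}}\le C_1\,\mathbb{E}[M]_t^{1/2}$, where the optional quadratic variation is the pathwise sum of squared jumps,
\begin{equation*}
[M]_t=\sum_{r\le t}\|\Delta M_r\|_{\mathbb{H}}^2=\int_0^t\int_U\|f(r,u)\|_{\mathbb{H}}^2\,N(dr,du).
\end{equation*}
The key elementary observation is the subadditivity bound $\bigl(\sum_i a_i^2\bigr)^{1/2}\le\sum_i a_i$ for $a_i\ge 0$, applied to the jump sizes, which gives
\begin{equation*}
[M]_t^{1/2}\le\int_0^t\int_U\|f(r,u)\|_{\mathbb{H}}\,N(dr,du).
\end{equation*}
Taking expectations and using Campbell's formula (i.e., $\mathbb{E}\int g\,dN=\mathbb{E}\int g\,\nu(du)\,dr$ for non-negative predictable integrands $g$) delivers the right-hand side of the stated bound.

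The main obstacle is really only in the $p=1$ case, and it is a matter of invoking the correct form of BDG for $\mathbb{H}$-valued purely discontinuous martingales (available in the monograph of M\'etivier--Pellaumail, for instance); once that is in hand the derivation collapses to the $\ell^2$-vs-$\ell^1$ comparison and Campbell's formula. The $p=2$ case is essentially automatic once one identifies $M$ as a square-integrable martingale, and both arguments match the bounds cited in \cite{cao2005successive,hausenblas2007spdes}.
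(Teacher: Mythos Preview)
Your proposal is correct. The paper does not prove this lemma at all; it simply records the statement and attributes the case $p\ge 2$ to \cite{cao2005successive} and the case $p=1$ to \cite{hausenblas2007spdes}. Your argument---Doob plus the It\^o isometry for $p=2$, and BDG for $\mathbb{H}$-valued purely discontinuous martingales followed by the $\ell^2$--$\ell^1$ comparison and Campbell's formula for $p=1$---is exactly the standard route and is in line with what those references contain, so there is nothing to compare beyond noting that you have supplied the details the paper omits.
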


We will also need the following extension of the Bihari’s inequality, established in Lemma 2.2 of \cite{cao2007successive}.
\begin{lemma}\label{lem3}
	Fix $T>0$. Let $g, h, \lambda$ be non-negative functions on $[0,T]$, $V$ be a continuous non-decreasing function on $[0,T]$, and $\Phi:\mathbb{R}^{+}\rightarrow\mathbb{R}^{+}$ be a continuous non-decreasing function. Set $J(z)=\int_{z_{0}}^{z}\frac{1}{\Phi(r)}dr$, where $z_{0}>0$. If for any $t\in [0,T]$,
	\begin{align}\label{bilem}
		g(t)\leq h(t)+\int_{t}^{T}\lambda(s)\Phi(g(s))dV(s),
	\end{align}
	then $g(t)\leq J^{-1}\Big(J(h^*(t))+\int_{t}^{T}\lambda(s)dV(s)\Big)$, where $h^*(t)=\sup_{t\leq s\leq T}|h(s)|$.
	
	In particular, if in Eqn. \eqref{bilem} $h\equiv0, ~dV(s)=ds$, and $\Phi(0)=0$, $\int_{0+}\frac{1}{\Phi(x)}dx=+\infty$. Then $g\equiv0$.
\end{lemma}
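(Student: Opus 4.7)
The plan is to view this as a backward-in-time Bihari-type inequality and reduce it to a scalar comparison argument by freezing the $h^*$ term at an arbitrary point and running the standard $J$-transform on the resulting inequality. Fix $t_0\in[0,T]$ and restrict attention to $[t_0,T]$; on this subinterval define the majorant
$$w(t) := h^*(t_0) + \int_t^T \lambda(s)\Phi(g(s))\,dV(s).$$
Since $h(t)\le h^*(t_0)$ for every $t\ge t_0$, hypothesis \eqref{bilem} yields $g(t)\le w(t)$ throughout $[t_0,T]$. The reason for replacing the moving bound $h^*(t)$ by the constant $h^*(t_0)$ is precisely to make $w$ a non-increasing function of $t$ with the clean Stieltjes differential $-dw(t)=\lambda(t)\Phi(g(t))\,dV(t)$, thereby eliminating the awkward variation of $h^*$ from the dynamics.

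The heart of the argument is then a one-line $J$-transform. Using the monotonicity of $\Phi$ together with $g(t)\le w(t)$, I would upgrade the differential identity to the inequality $-dw(t)\le\lambda(t)\Phi(w(t))\,dV(t)$. Dividing by $\Phi(w(t))>0$ and recognising $dJ(w(t))=dw(t)/\Phi(w(t))$, I integrate from $t_0$ to $T$; since $w(T)=h^*(t_0)$, this delivers
$$J(w(t_0))\le J(h^*(t_0))+\int_{t_0}^{T}\lambda(s)\,dV(s).$$
Because $\Phi>0$ makes $J$ strictly increasing, applying $J^{-1}$ and invoking $g(t_0)\le w(t_0)$ gives the claimed bound at $t_0$; arbitrariness of $t_0\in[0,T]$ concludes the main statement. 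The particular case ($h\equiv0$, $dV(s)=ds$, $\Phi(0)=0$, $\int_{0+}1/\Phi=+\infty$) then follows immediately: divergence of the integral forces $J(0^+)=-\infty$, so $J^{-1}(-\infty+\text{(finite)})=0$ and hence $g\equiv0$.

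The principal technical obstacle I anticipate is justifying the chain rule $dJ(w(t))=dw(t)/\Phi(w(t))$ against the merely continuous, non-decreasing measure $dV$, rather than against Lebesgue measure. This is handled by noting that $w$ is absolutely continuous with respect to $dV$ (which is built into its definition as a constant plus a $dV$-integral) and that $J$ is $C^1$ on any compact subinterval of $(0,\infty)$, so the change-of-variables formula for Lebesgue–Stieltjes integrals applies verbatim. A secondary subtlety is the degenerate case $h^*(t_0)=0$, where $J(h^*(t_0))$ must be interpreted as $-\infty$; one passes to the limit $z_0\to 0^+$ in the definition of $J$ and uses the blow-up hypothesis $\int_{0+}1/\Phi=+\infty$ in the particular case to see that the right-hand side collapses to zero.
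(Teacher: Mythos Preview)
The paper does not prove this lemma; it simply cites it as Lemma~2.2 of \cite{cao2007successive}. Your argument is the standard backward-in-time Bihari comparison and is correct: freezing $h^*(t_0)$ to obtain a monotone majorant $w$, bounding $-dw\le\lambda\Phi(w)\,dV$, and integrating through the $J$-transform is exactly how such results are established, and your handling of the Lebesgue--Stieltjes chain rule and the degenerate case $h^*(t_0)=0$ is appropriate.
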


\subsection{Existence and uniqueness}
\label{sec:main_results_wellposed}
In this section, we give our first main result (Theorem \ref{wellposed}), the strong well-posedness of the delay-path-dependent SVI \eqref{see} recalled here as follows:
\begin{align*}
	\begin{cases}
		dX(t)\in 
		\sigma(t,X_{t})dW(t)+b(t,X_{t})dt+\int_{U}f(t,X_{t},u)\widetilde{N}(dt,du) \\
		\hspace*{1.5cm}+A(t,X(t))dt-\partial\varphi(X(t))dt, \hspace*{2.6cm}t\in[0,T],\\
		X(t)=\phi(t), \hspace*{6.5cm}t\in[-h,0],
	\end{cases}
\end{align*}
where, with $a$ being a continuous function from $[0,T]$ to $[0,h]$ satisfying $a(0)=h$, and for $t\in[0,T]$, 
$$X_t=\left\{X\left(\Big(\big(t-a(t)\big)\vee s\Big)\wedge t\right), s\in[-h,T]\right\}=\begin{cases}
	X(t-a(t)),&\mbox{if} ~~s\in[-h, t-a(t)],\\
	X(s),&\mbox{if} ~~s\in[t-a(t), t],\\ 
	X(t), &\mbox{if} ~~s\in[t, T],
\end{cases}$$
$b:\mathbb{R}^+\times\mathcal{D}([-h,T];\mathbb{H})\times\Omega\rightarrow\mathbb{H}$, 
$\sigma:\mathbb{R}^+\times \mathcal{D}([-h,T];\mathbb{H})\times\Omega\rightarrow L^{0}_{2}(\mathbb{K},\mathbb{H})$ are
progressively measurable, and 
$f:\mathbb{R}^+\times\mathcal{D}([-h,T];\mathbb{H})\times(\mathbb{U}\backslash\{0\})\times\Omega\rightarrow \mathbb{H}$ is predictable.


The following assumption will be enforced to obtain the first main result (Theorem \ref{wellposed}), regarding the well-posedness of Eqn. \eqref{see} on a given probability space $(\Omega, \mathcal{F}, \{\mathcal{F}_t; t\geq0\},\mathbb{P})$.
\begin{assumption}
	\label{assumption}
	Suppose the following conditions hold:
	\begin{itemize}
		\item[(H1)] $A(t,\cdot): \mathbb{V}\to\mathbb{V}^*$ is monotone, bounded and hemicontinuous, i.e., for any $u, v\in\mathbb{V}$ and any $t\geq0$, $\mathbb{R}\ni \lambda\to {}_{\mathbb{V}}\big\langle v, A(t,u+\lambda v)\big\rangle_{\mathbb{V}^{*}}$ is continuous. There exists  $m_{0}>0$ and $\beta>0$ such that for any $t\geq 0$ and $v,v'\in \mathbb{V}$,
		\begin{align*}
			m_0\|v-v'\|_{\mathbb{V}}^{2}+2{}_{\mathbb{V}}\big\langle v-v' ,A(t,v)-A(t,v')\big\rangle_{\mathbb{V}^{*}}
			\leq\beta\|v-v'\|_{\mathbb{H}}^{2}.
		\end{align*}
		
		\item[(H2)]  There exists 
		$H(t,z): [0,\infty) \times [0,\infty) \rightarrow \mathbb{R}^{+}$ that is locally integrable in $t$ for any fixed $z$, and  continuous and nondecreasing in $z$ for any $t$, such that for any $\xi \in L^{4}(\Omega \,;\, \mathcal{D}([-h,T]\,;\, \mathbb{H}))$,
		\begin{align*}
			&\mathbb{E}\|\sigma(t, \xi)\|_{L^{0}_{2}}^{4}+\mathbb{E}\|b(t, \xi)\|_{\mathbb{H}}^{4}+\mathbb{E}\left(\int_{U}\|f(t, \xi, u)\|_{\mathbb{H}}^{2} \nu(d u)\right)^{2} +\mathbb{E} \int_{U}\|f(t, \xi, u)\|_{\mathbb{H}}^{4} \nu(d u) \\
			&\hspace{8cm}\leq H\left(t, \mathbb{E}\|\xi\|_{T}^{4}\right),
		\end{align*}
		where $\|\xi\|_{T}:=\sup _{t \in[-h, T]}\|\xi(t)\|_{\mathbb{H}}$.
		For any constant $c>0$, the equation $$\frac{d u}{d t}=c H(t, u)$$ has a solution on $[0,T]$ for any initial value $u(0)$.
		\smallskip
		
		\item[(H3)] For every $t$, $\omega$, and $u$, the functions $\sigma(t,\cdot,\omega)$, $b(t,\cdot,\omega)$ and $f(t,\cdot,u,\omega)$ are continuous. For any $R>0$ and $\xi,\xi'\in L^{2}(\Omega;\mathcal{D}([-h,T];\mathbb{H}))$ satisfying that, for any $t\in[0,T]$ and 
		$$\|\xi\|_{T}\vee\|\xi'\|_{T}\leq R, \qquad a.s.,$$
		the following inequality is satisfied:
		\begin{align*} &\mathbb{E}\|\sigma(t,\xi)-\sigma(t,\xi')\|_{L^{0}_{2}}^{2}+\mathbb{E}\|b(t,\xi)-b(t,\xi')\|_{\mathbb{H}}^{2}\\
			&\hspace{0.9cm}+\mathbb{E}\Bigg[\int_{U}\|f(t,\xi,u)-f(t,\xi',u)\|^{2}_{\mathbb{H}}\nu(du)+
			\Big(\int_U\|f(t,\xi,u)-f(t,\xi',u)\|_{\mathbb{H}}\nu(du)\Big)^{2}\Bigg]\\
			&\hspace{0.9cm}\leq G_{R}(t,\mathbb{E}\|\xi-\xi'\|_{T}^{2}).
		\end{align*}
		Here, $G_{R}(t,z):[0,\infty)\times[0,\infty)\rightarrow\mathbb{R}^{+}$ is locally integrable in $t$ for any fixed $z$ and is continuous and nondecreasing in $z$ for any fixed $t$. Furthermore, $G_{R}(t,0)=0$ for any $t$.
		If $Z(t)$ is a nonnegative function  satisfying that for all $t\in[0,T]$ where $T>0$ and any constant $c>0$,
		\begin{align*}
			Z(t)\leq c\int_{0}^{t}G_{R}(s,Z(s))ds,
		\end{align*}
		then $Z(t)\equiv0$ for all $t\in[0,T]$.
		\smallskip
		
		\item[(H4)] $0\in\operatorname{Int}(D(\partial \varphi))$ and $\varphi(u) \leq C\left(1+\| u \|_{\mathbb{H}}^{l}\right)$ for some $1 \leq l \leq 2$, and $0=\varphi(0) \leq \varphi(u)$ for any $ u \in \mathbb{H}$. Additionally, $\mathbb{E}\sup_{t\in[-h,0]}\|\phi(t)\|_{\mathbb{H}}^4+\mathbb{E}\int_{-h}^0\|\phi(t)\|_{\mathbb{V}}^2dt<\infty$.
		
	\end{itemize}
\end{assumption}

The above assumption generalizes but is weaker than those enforced in \cite{mao2019averaging,mao2016averaging,xu2014averaging}. In the following remark, we provide a concrete example for understanding the above assumption.
\begin{remark}
	Let  $G_{R}(t,z)=\lambda_{0}(t)\sum_{i=1}^{3}\phi_{i}(z)$, where $\lambda_{0}$ is a nonnegative and locally integrable function, and $\phi_{i}$ for $i=1,2,3$ are concave, continuous, non-decreasing functions on $[0,\infty)$ such that $\phi_{i}(0)=0$ and $\int_{0^{+}}\frac{1}{\sum_{i=1}^{k}\phi_{i}(z)}dz=+\infty$ where $0^{+}$ stands for a small neighborhood of $0$ in $\mathbb{R}^{+}$. Then $G_{R}$ satisfies (H3). An example of $\{\phi_{i}\}_{i=1,2,3}$  \citep{cao2007successive,taniguchi2010existence} can be seen below: let $\delta\in(0,1)$ and $\alpha_{0}\in(0,1]$,
	\begin{align*}
		&\phi_{1}(z)=z, \hspace{7.9cm} z\geq 0,\vspace{0.21cm}\\
		&\phi_{2}(z)=\begin{cases}
			0, & z=0,\vspace{0.2cm}\\
			z\Big(\log\frac{1}{z}\Big)^{\alpha_{0}}, & 0<z<\delta,\vspace{0.2cm}\\
			\Big(\log\frac{1}{\delta}\Big)^{\alpha_{0}-1}\Big(\log\frac{1}{\delta}-\alpha_{0}\Big)z+\alpha_{0}\delta\Big(\log\frac{1}{\delta}\Big)^{\alpha_{0}-1}, \hspace{0.8cm}& z\geq\delta,\vspace{0.2cm}\ \ 
		\end{cases}\\
		&\phi_{3}(z)=\begin{cases}
			0, & z=0,\vspace{0.2cm}\\
			z\log\frac{1}{z}\log\log\frac{1}{z}, & 0< z<\delta,\vspace{0.2cm}\\
			\Big(\log\frac{1}{\delta}\log\log\frac{1}{\delta}-\log\log\frac{1}{\delta}-1\Big)z+\delta\log\log\frac{1}{\delta}+\delta, \quad& z\geq\delta.
		\end{cases}
	\end{align*}	
\end{remark}

		Before we proceed to establish the well-posedness of the SVI (\ref{see}), we have to firstly define what a strong solution means here.
		\begin{definition}\label{Def-1}
			A couple of c\`{a}dl\`{a}g processes $(X,\eta)$ is called a solution of  Eqn.   (\ref{see}), if
			\begin{enumerate}
				\item $X\in L^{2}([-h,T]\times\Omega;\mathbb{V})\cap L^{2}\left(\Omega;\mathcal{D}([-h,T];\mathbb{H})\right)$, and for every $t\in[-h,T]$, $X(t)\in\overline{D(\partial\varphi)})$ a.s.;
				\smallskip
				
				\item $\eta(\cdot,\omega)\in\mathcal{D}([0,T];\mathbb{H})$ where $\eta$ is of finite variation and $\eta(0)=0$, a.s.;
				\smallskip
				
				\item $X(t)=\phi(t)$ for $t\in[-h,0]$, and for any $t\in[0,T]$, for any $v\in\mathbb{V}$, the following equality holds a.s.:
				\begin{align*}
					{}_{\mathbb{V}}\langle v,X(t)\rangle_{\mathbb{V}^{*}}=&{}_{\mathbb{V}}\langle v,\;\phi({0})\rangle_{\mathbb{V}^{*}}+{}_{\mathbb{V}}\Big\langle v,\;\int_{0}^{t}\sigma(s,X_{s})dW(s)\Big\rangle_{\mathbb{V}^{*}}\\
					&+{}_{\mathbb{V}}\Big\langle v,\;\int_{0}^{t}b(s,X_{s})ds\rangle_{\mathbb{V}^{*}}
					+{}_{\mathbb{V}}\Big\langle v,\;\int_{0}^{t}A(s,X(s))ds\Big\rangle_{\mathbb{V}^{*}}	\\	&+{}_{\mathbb{V}}\Big\langle v, \;\int_{0}^{t}\int_{U}f(s,X_{s},u)\widetilde{N}(ds,du)\Big\rangle_{\mathbb{V}^{*}}-{}_{\mathbb{V}}\langle v,\eta(t)\rangle_{\mathbb{V}^{*}};
				\end{align*}
				\item For any $\alpha\in\mathcal{D}([0,T];\mathbb{H})$ and any $t\in[0,T]$,
				\begin{align*}
					\langle X(t)-\alpha(t),d\eta(t)\rangle_{\mathbb{H}}\geq (\varphi(X(t))-\varphi(\alpha(t)))dt,\quad a.s..
				\end{align*}
			\end{enumerate}	
		\end{definition}
		
		We first consider the following SVI without path dependence:
		\begin{align}\label{see_omega0}
			\begin{cases}
				dX(t)\in 
				\sigma(t, \omega) d W(t)+b(t, \omega)d t+\int_{U} f(t, u, \omega)\widetilde{N}(ds,du)\\
				\hspace*{1.6cm}+A(t,X(t))dt-\partial\varphi(X(t))dt, \hspace*{2.5cm}t\in [0,T],\\
				X(t)=\phi(t), \hspace*{6.5cm}t\in[-h,0],
			\end{cases}
		\end{align}
		where $\sigma: [0,\infty)\times \Omega \rightarrow L_{2}^{0}(\mathbb{K}, \mathbb{H})$ and $b: [0,\infty) \times \Omega \rightarrow \mathbb{H}$ are progressively measurable,  $f: [0,\infty)\times \Omega \times U \rightarrow \mathbb{H}$ is predictable.
		
		\begin{theorem}\label{wellposed1}
			Assume that Assumption \ref{assumption} holds  and that 
			\begin{align*}
				& \mathbb{E}\int_{0}^{T} \left[\|\sigma(t, \omega)\|_{L_{2}^{0}}^{4}+\|b(t, \omega)\|_{\mathbb{H}}^{4}+\left(\int_{U}\|f(t, u, \omega)\|_{\mathbb{H}}^{2} \nu(d u)\right)^{2}\right]dt\\
				&\hspace{5cm}+\mathbb{E} \int_{0}^{T} \int_{U}\|f(t, u, \omega)\|_{\mathbb{H}}^{4} \nu(du)dt<+\infty.
			\end{align*}
			Then a unique solution $(X(t), \eta(t))$ exists for Eqn. \eqref{see_omega0} satisfying 
			$$
			\mathbb{E}\sup_{t\in[-h,T]}\|X(t)\|_{\mathbb{H}}^4<\infty\quad\text{and}\quad \mathbb{E}\int_{-h}^T\|X(t)\|_{\mathbb{V}}^2dt<\infty.
			$$
			Moreover, the following energy equality holds:
			\begin{align}\label{energy}
				\left\|X(t)\right\|_{\mathbb{H}}^{2}=&\|\phi(0)\|_{\mathbb{H}}^{2}+2 \int_{0}^{t}{}_{\mathbb{V}}\big\langle  X(s), A(s, X(s))\big\rangle_{\mathbb{V}^{*}}d s +2 \int_{0}^{t}\big\langle X(s), b(s, \omega)\big\rangle_{\mathbb{H}} d s\nonumber\\ 
				& +2 \int_{0}^{t}\big\langle X(s), \sigma(s, \omega)d W(s)\big\rangle_{\mathbb{H}} -2 \int_{0}^{t}\big\langle X(s), d \eta(s)\big\rangle_{\mathbb{H}}ds+\int_{0}^{t}\|\sigma(s, \omega)\|_{L_{2}^0}^{2} d s \nonumber\\ 
				& +2 \int_{0}^{t} \int_U\big\langle  X(s), f(s, u, \omega) \big\rangle_{\mathbb{H}} \widetilde{N}(ds,du)+\int_{0}^{t} \int_{U}\|f(s, u, \omega)\|_{\mathbb{H}}^{2} \widetilde{N}(ds,du)\nonumber\\ 
				& +\int_{0}^{t} \int_{U}\|f(s, u, \omega)\|_{\mathbb{H}}^{2} \nu(ds,du).
			\end{align}
		\end{theorem}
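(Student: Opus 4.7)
Since the coefficients $\sigma,b,f$ depend only on $(t,\omega)$ and not on the state, the noise can be decoupled from the multivalued and nonlinear parts. Define the c\`adl\`ag process
\begin{align*}
Z(t) := \phi(0) + \int_0^t \sigma(s,\omega)\,dW(s) + \int_0^t b(s,\omega)\,ds + \int_0^t\!\!\int_U f(s,u,\omega)\,\widetilde{N}(ds,du),
\end{align*}
which by the integrability hypothesis and Lemma \ref{lem2} belongs to $L^4(\Omega;\mathcal{D}([0,T];\mathbb{H}))$. Setting $Y(t):=X(t)-Z(t)$, the SVI \eqref{see_omega0} reduces, for each fixed $\omega$, to the deterministic parabolic variational inclusion
\begin{align*}
\tfrac{d Y(t)}{d t} \in A(t,Y(t)+Z(t)) - \partial\varphi(Y(t)+Z(t)), \qquad Y(0)=0,
\end{align*}
with $A(t,\cdot)$ monotone, bounded, hemicontinuous by (H1) and $\partial\varphi$ maximal monotone. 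The plan is to solve this pathwise and then recover $X=Y+Z$.

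For existence, I would use the Moreau--Yosida approximation: for $\lambda>0$, replace $\partial\varphi$ by its Yosida regularization $(\partial\varphi)_\lambda=\nabla\varphi_\lambda$, which is Lipschitz on $\mathbb{H}$. The regularized problem has a unique solution $Y_\lambda\in L^2([0,T];\mathbb{V})\cap C([0,T];\mathbb{H})$ by the classical Lions--Krylov--Rozovskii variational framework for monotone coercive operators with a Lipschitz perturbation. Applying the chain rule to $\|X_\lambda(t)\|_{\mathbb{H}}^2$ and using (H1), (H4), together with the $L^4$ bounds on $Z$ from (H2) and Lemma \ref{lem2}, I would derive uniform-in-$\lambda$ bounds on $Y_\lambda$ in $L^2([0,T];\mathbb{V})$ and $C([0,T];\mathbb{H})$, on $\varphi_\lambda(X_\lambda)$, and on $\nabla\varphi_\lambda(X_\lambda)$ in $L^2([0,T];\mathbb{H})$. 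Extracting weakly convergent subsequences and applying Minty's lemma (for the $A$ term, via hemicontinuity in (H1)) together with the demiclosedness of the graph of $\partial\varphi$ would identify the limits and exhibit a solution pair $(X,\eta)$ of \eqref{see_omega0}.

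Uniqueness then follows by subtracting two solutions: the noise parts cancel, and applying the chain rule to $\|X_1(t)-X_2(t)\|_{\mathbb{H}}^2$ together with monotonicity of both $A$ and $\partial\varphi$ yields $\|X_1(t)-X_2(t)\|_{\mathbb{H}}^2\leq \beta\int_0^t\|X_1-X_2\|_{\mathbb{H}}^2\,ds$, from which Gronwall concludes. The moment estimates follow by taking expectations in the $\lambda$-uniform bounds and using Lemma \ref{lem2} to control the stochastic pieces of $Z$ in $L^4$. For the energy equality \eqref{energy}, I would apply the It\^o formula of Gy\"ongy--Krylov type for $\mathbb{V}\subset\mathbb{H}\subset\mathbb{V}^*$ semimartingales with jumps to $\|X_\lambda(t)\|_{\mathbb{H}}^2$, whose terms $\int_0^t\int_U\|f\|_{\mathbb{H}}^2\widetilde{N}(ds,du)$ and $\int_0^t\int_U\|f\|_{\mathbb{H}}^2\nu(du)\,ds$ arise from the L\'evy--It\^o decomposition, and then let $\lambda\to 0$ using the $\lambda$-uniform estimates already in hand.

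The principal obstacle is the passage to the limit in the multivalued term, since $\nabla\varphi_\lambda(X_\lambda)$ converges only weakly: to identify its weak limit as an element of $\partial\varphi(X)$ (thus verifying item (4) of Definition \ref{Def-1}), I would need strong convergence $X_\lambda\to X$ in $L^2([0,T];\mathbb{H})$, obtained by an Aubin--Lions-type compactness argument combined with the identity $\nabla\varphi_\lambda(x)\in\partial\varphi(J_\lambda x)$ with $J_\lambda x\to x$. A secondary technical hurdle is justifying the It\^o formula underlying \eqref{energy} in the presence of the finite-variation but not absolutely continuous process $\eta$, together with the $\mathbb{V}^*$-valued drift and the jump noise, which is where the delicate interplay between the Gelfand-triple It\^o formula and the subdifferential machinery must be made rigorous.
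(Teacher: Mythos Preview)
Your overall strategy of Moreau--Yosida approximation, uniform estimates, limit identification via Minty, and uniqueness by monotonicity mirrors the paper's approach in spirit, but the specific route you propose---decoupling the additive noise by setting $Y=X-Z$ and solving a \emph{deterministic} variational inclusion for each fixed $\omega$---has a genuine gap in the Gelfand-triple setting.

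The issue is that $Z(t,\omega)$ is only $\mathbb{H}$-valued: the stochastic and jump integrals land in $\mathbb{H}$, and nothing in the hypotheses gives $Z\in L^2([0,T];\mathbb{V})$. But $A(t,\cdot)$ is only defined on $\mathbb{V}$, so the expression $A(t,Y(t)+Z(t))$ in your random PDE is ill-posed unless $Z(t)\in\mathbb{V}$. The Lions--Krylov--Rozovskii machinery you invoke requires the operator $y\mapsto A(t,y+Z(t))$ to act from $\mathbb{V}$ to $\mathbb{V}^*$, which collapses when $Z(t)\notin\mathbb{V}$. This is not a technicality that Aubin--Lions compactness can repair: the approximating problem itself is not well-defined. (The additive-noise ``random PDE'' trick works for semilinear equations where one can use the stochastic convolution against an analytic semigroup to gain regularity; here $A$ is fully nonlinear and no such smoothing is available.)

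The paper sidesteps this by keeping the problem stochastic throughout. It Yosida-regularises $\partial\varphi$ to $\frac{1}{\epsilon}D\varphi_\epsilon$ and invokes an existing SPDE well-posedness result (Taniguchi 2010) for the approximating equation \eqref{see_omega}, where the variational It\^o formula is already built in and no pathwise splitting is needed. Uniform-in-$\epsilon$ bounds on $\mathbb{E}\sup_t\|X^\epsilon\|_{\mathbb{H}}^4$, $\mathbb{E}(\int_0^T\|X^\epsilon\|_{\mathbb{V}}^2)^2$, and $\mathbb{E}(\int_0^T\frac{1}{\epsilon}\|D\varphi_\epsilon(X^\epsilon)\|_{\mathbb{H}})^2$ follow from the energy equality. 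Crucially, rather than extracting weakly convergent subsequences and using demiclosedness of $\partial\varphi$ as you suggest, the paper shows directly that $\{X^\epsilon\}$ is \emph{Cauchy} in $L^2(\Omega;\mathcal{D}([0,T];\mathbb{H}))\cap L^2([0,T]\times\Omega;\mathbb{V})$ via the algebraic identity in Proposition~\ref{prop:varphi_epsilon}(5), which yields $\mathbb{E}\sup_t\|X^\epsilon-X^\delta\|_{\mathbb{H}}^2\leq C(\epsilon^{1/2}+\delta^{1/2})$. Strong convergence of $X^\epsilon$ is thus obtained without any compactness argument, and the limit of $\eta^\epsilon(t)=\frac{1}{\epsilon}\int_0^t D\varphi_\epsilon(X^\epsilon)\,ds$ is identified by checking the variational inequality in Definition~\ref{Def-1}(4) via lower semicontinuity of $\varphi$ and weak convergence of $\eta^\epsilon$ in $\mathbb{V}^*$. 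Your Minty argument for the $A$-limit and your uniqueness argument are essentially the same as the paper's.
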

		
		\begin{proof}
			See Section \ref{appendix:wellposed1} of the Appendix.
		\end{proof}		
		
		By arguments analogous to that of \cite{zhang2007skorohod}, we have the following estimate.
		\begin{lemma}\label{lem-1}
			Suppose $(X,\eta)$ is a solution of  Eqn.  (\ref{see}). Then there exists some $a_0\in\operatorname{Int}(D(\partial\varphi))$ and constants $m_{1}>0$ and $k_{1}> 0$, such that for all $0\leq s\leq t\leq T$,
			\begin{align*}
				\int_{s}^{t}\langle X(r)-a_0,d\eta(r)\rangle_{\mathbb{H}}\geq m_{1}|\eta|^{t}_{s}-k_{1}\int_{s}^{t}\|X(r)-a_0\|_{\mathbb{H}}dr-k_{1}m_{1}(t-s),
			\end{align*}
			where $|\eta|^{t}_{s}$ is the total variation of $\eta$ on $[s,t]$.
		\end{lemma}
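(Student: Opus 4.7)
The plan is to exploit the hypothesis $0\in\operatorname{Int}(D(\partial\varphi))$ together with the local boundedness property in Lemma~2.1(3): pick $a_0$ close to $0$ and $\rho>0$ such that $\overline{B(a_0,\rho)}\subset\operatorname{Int}(D(\partial\varphi))$ and
$$M:=\sup\{\|z\|_{\mathbb{H}}:z\in\partial\varphi(y),\; y\in\overline{B(a_0,\rho)}\}<\infty.$$
The eventual constants will be $m_1=\rho$ and $k_1=M$. The key idea is to test the subdifferential inequality of Definition~\ref{Def-1}(4) against the probe $\alpha(r)=a_0+\rho v(r)$, where $v(r):=(d\eta/d|\eta|)(r)$ is the Radon--Nikodym derivative of the $\mathbb{H}$-valued finite-variation measure $d\eta$ with respect to its total variation $d|\eta|$. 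By construction $\|v(r)\|_{\mathbb{H}}=1$ for $|\eta|$-a.e.\ $r$, hence $\alpha(r)\in\overline{B(a_0,\rho)}\subset D(\partial\varphi)$, and the point of the choice is that $\langle \rho v(r),d\eta(r)\rangle_{\mathbb{H}}=\rho\,d|\eta|(r)$ -- precisely the positive term $\rho|\eta|_s^t$ that we want.

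Carrying out the argument: using a measurable selection theorem, choose $\zeta(r)\in\partial\varphi(\alpha(r))$ with $\|\zeta(r)\|_{\mathbb{H}}\le M$. By convexity of $\varphi$, $\varphi(X(r))-\varphi(\alpha(r))\ge\langle\zeta(r),X(r)-\alpha(r)\rangle_{\mathbb{H}}$. Substituting this lower bound into Definition~\ref{Def-1}(4) and rearranging gives
$$\int_s^t\langle X(r)-\alpha(r),\,d\eta(r)-\zeta(r)\,dr\rangle_{\mathbb{H}}\ge 0,$$
which after expansion reads
$$\int_s^t\langle X(r)-a_0,d\eta(r)\rangle_{\mathbb{H}}\;\ge\;\rho|\eta|_s^t+\int_s^t\langle X(r)-a_0,\zeta(r)\rangle_{\mathbb{H}}\,dr-\rho\int_s^t\langle v(r),\zeta(r)\rangle_{\mathbb{H}}\,dr.$$
Bounding the last two terms by $-M\int_s^t\|X(r)-a_0\|_{\mathbb{H}}\,dr$ and $-M\rho(t-s)$ via Cauchy--Schwarz together with $\|\zeta(r)\|_{\mathbb{H}}\le M$ and $\|v(r)\|_{\mathbb{H}}=1$ yields exactly the claimed inequality with $m_1=\rho$ and $k_1=M$.

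The main obstacle is that $v(r)=d\eta/d|\eta|$ is merely Borel measurable, not c\`adl\`ag, so the probe $\alpha(r)=a_0+\rho v(r)$ is not directly admissible in Definition~\ref{Def-1}(4), which requires $\alpha\in\mathcal{D}([0,T];\mathbb{H})$. The remedy is an approximation argument: construct piecewise-constant c\`adl\`ag functions $v_n$ with $\|v_n(r)\|_{\mathbb{H}}\le 1$ converging to $v$ in $L^1(d|\eta|+dr;\mathbb{H})$ (such $v_n$ exist because simple c\`adl\`ag functions are dense in this space), apply the display above to $\alpha_n(r)=a_0+\rho v_n(r)$ with a measurable selection $\zeta_n(r)\in\partial\varphi(\alpha_n(r))$ of norm at most $M$, and pass to the limit by dominated convergence; the limit $\int\langle \rho v_n,d\eta\rangle\to\rho|\eta|_s^t$ follows from $v_n\to v$ in $L^1(d|\eta|)$. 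A minor technical point is the measurable selection itself, which is guaranteed by the closed-convex values of $\partial\varphi$ (Lemma~2.1(2)) and measurability of the graph of $\partial\varphi$ restricted to $\overline{B(a_0,\rho)}\times\overline{B(0,M)}$.
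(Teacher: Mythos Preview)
Your proof is correct and follows the standard argument that the paper defers to by citing \cite{zhang2007skorohod} (the paper gives no self-contained proof of this lemma). The key ingredients---choosing $a_0\in\operatorname{Int}(D(\partial\varphi))$, the probe $\alpha(r)=a_0+\rho\,d\eta/d|\eta|(r)$, and the local boundedness of $\partial\varphi$ from Lemma~2.1(3)---are exactly the C\'epa/Zhang mechanism, and your handling of the c\`adl\`ag-admissibility issue via piecewise-constant approximation is the correct fix. One small simplification: you do not actually need to pass to the limit in the $\zeta_n$-terms, since for each $n$ the bounds $\int_s^t\langle X(r)-a_0,\zeta_n(r)\rangle\,dr\ge -M\int_s^t\|X(r)-a_0\|\,dr$ and $-\rho\int_s^t\langle v_n(r),\zeta_n(r)\rangle\,dr\ge -\rho M(t-s)$ already hold uniformly in $n$; only the term $\rho\int_s^t\langle v_n(r),d\eta(r)\rangle\to\rho|\eta|_s^t$ requires the limit, and that follows from $v_n\to v$ in $L^1(d|\eta|;\mathbb{H})$.
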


		We establish the existence of a solution $(X,\eta)$ of  Eqn. (\ref{see}) in the sense of Definition \ref{Def-1} through successive approximation. Set 
		$$X^{0}(t)=\left\{
		\begin{array}{ll}
			\phi(t), \quad& \hbox{$t\in[-h,0]$,} \\
			\phi(0), & \hbox{$t\in[0,T]$,}
		\end{array}
		\right.
		$$ and for $n\geq1$,
		\begin{equation}\label{see7}
			\begin{cases}
				dX^{n}(t)\in \sigma(t,X^{n-1}_{t})dW(t)+b(t,X^{n-1}_{t})dt+\int_{U}f(t,X^{n-1}_{t},u)\widetilde{N}(dt,du)\\
				\hspace*{1.5cm}+A(t,X^{n}(t))dt-\partial\varphi(X^{n}(t))dt,\hspace*{2.46cm} t\in[0,T],\\
				X^{n}(t)=\phi(t), \hspace*{6.55cm} t\in[-h,0].
			\end{cases}
		\end{equation}
		Assuming $X^{n-1}$ is well-defined, supposing $$\mathbb{E}\sup_{t\in[-h,T]}\|X^{n-1}(t)\|^4_{\mathbb{H}}<\infty\quad\text{and}\quad \mathbb{E}\int_{-h}^{T}\|X^{n-1}(t)\|^2_{\mathbb{V}}dt<\infty,
		$$
		then by Theorem \ref{wellposed1}, under Assumption \ref{assumption}, there exists a pathwise unique solution $(X^{n}(t),\eta^{n}(t))$ of  Eqn.  \eqref{see7}. According to Definition \ref{Def-1}, $(X^{n}(t),\eta^{n}(t))$ is a couple of c\`{a}dl\`{a}g processes having the following properties:
		\begin{itemize}
			\item $X^n\in L^{2}([-h,T]\times\Omega;\mathbb{V})\cap L^{2}\left(\Omega;\mathcal{D}([-h,T];\mathbb{H})\right)$, and for every $t\in[-h,T]$, $X^n(t)\in\overline{D(\partial\varphi)}$ a.s.;
			\smallskip\vspace{-0.45cm}
			
			\item $\eta^n(\cdot,\omega)\in\mathcal{D}([0,T];\mathbb{H})$ where $\eta^n$ is of finite variation and $\eta^n(0)=0$, a.s.;
			\vspace{0.2cm}
			
			\item For any $t\in[-h,T]$ and any $v\in\mathbb{V}$, 
			\begin{align*}
				{}_{\mathbb{V}}\langle v,X^n(t)\rangle_{\mathbb{V}^{*}}=&{}_{\mathbb{V}}\langle v,\;\phi({0})\rangle_{\mathbb{V}^{*}}+{}_{\mathbb{V}}\Big\langle v,\;\int_{0}^{t}\sigma(s,X^{n-1}_{s})dW(s)\Big\rangle_{\mathbb{V}^{*}}\\
				&+{}_{\mathbb{V}}\Big\langle v,\;\int_{0}^{t}b(s,X^{n-1}_{s})ds\rangle_{\mathbb{V}^{*}}
				+{}_{\mathbb{V}}\Big\langle v,\;\int_{0}^{t}A(s,X^n(s))ds\Big\rangle_{\mathbb{V}^{*}}	\\	&+{}_{\mathbb{V}}\Big\langle v, \;\int_{0}^{t}\int_{U}f(s,X^{n-1}_{s},u)\widetilde{N}(ds,du)\Big\rangle_{\mathbb{V}^{*}}-{}_{\mathbb{V}}\langle v,\eta^n(t)\rangle_{\mathbb{V}^{*}}, \quad a.s.;
			\end{align*}
			\item For any $\alpha\in\mathcal{D}([0,T];\mathbb{H})$ and any $t\in[0,T]$,
			\begin{align*}
				\langle X^n(t)-\alpha(t),d\eta^n(t)\rangle_{\mathbb{H}}\geq (\varphi(X^n(t))-\varphi(\alpha(t)))dt,\quad a.s..
			\end{align*}
		\end{itemize}
		
		The desired properties established in the following proposition are crucial in proving Theorem \ref{wellposed}. 
		
		\begin{proposition}\label{prop1}
			Under Assumption \ref{assumption}, we have
			\begin{align*}
				\sup_{n}\left(\mathbb{E}\sup_{-h\leq t\leq T}\|X^{n}(t)\|_{\mathbb{H}}^{4}+\mathbb{E}\left(\int_{0}^{T}\|X^{n}(t)\|_{\mathbb{V}}^{2}dt\right)^2+\mathbb{E}\left(|\eta^{n}|_{0}^{T}\right)^2\right)<\infty.
			\end{align*}
		\end{proposition}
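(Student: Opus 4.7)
The plan is to start from the energy equality \eqref{energy} applied to $(X^n,\eta^n)$, extract a coercive $m_0\|X^n(s)\|_{\mathbb{V}}^2$ term using (H1), and bound the subdifferential contribution below via Lemma \ref{lem-1}. Specifically, setting $v'=0$ in (H1) and using boundedness of $A(s,\cdot)$ gives $2{}_{\mathbb{V}}\langle X^n(s),A(s,X^n(s))\rangle_{\mathbb{V}^{*}}\le -\tfrac{m_0}{2}\|X^n(s)\|_{\mathbb{V}}^2+\beta\|X^n(s)\|_{\mathbb{H}}^2+C$, while splitting $\int_0^t\langle X^n,d\eta^n\rangle_{\mathbb{H}}=\int_0^t\langle X^n-a_0,d\eta^n\rangle_{\mathbb{H}}+\langle a_0,\eta^n(t)\rangle_{\mathbb{H}}$ and applying Lemma \ref{lem-1} (with $a_0$ chosen near $0\in\operatorname{Int}(D(\partial\varphi))$ so that $c_1:=2(m_1-\|a_0\|_{\mathbb{H}})>0$) produces a base inequality of the schematic form
\begin{align*}
\|X^n(t)\|_{\mathbb{H}}^2+\tfrac{m_0}{2}\!\!\int_0^t\!\|X^n(s)\|_{\mathbb{V}}^2 ds+c_1|\eta^n|_0^t\le C(1+t)+C\!\!\int_0^t\!\|X^n(s)\|_{\mathbb{H}}^2 ds+\mathcal{N}_n(t)+\mathcal{M}_n(t),
\end{align*}
where $\mathcal{N}_n(t)$ collects the contributions of $b,\sigma,f$ evaluated at $X^{n-1}_s$ and $\mathcal{M}_n(t)$ the Wiener and compensated-Poisson stochastic integrals.

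Next I would square both sides, take $\sup_{s\le t}$ and expectation, and control the martingale terms via Burkholder--Davis--Gundy for the Wiener integral and Lemma \ref{lem2} for the Poisson integral. For example, BDG combined with Cauchy--Schwarz and Young yields
\begin{align*}
\mathbb{E}\sup_{s\le t}\Big|\!\int_0^s\!\langle X^n(r),\sigma(r,X^{n-1}_r)dW(r)\rangle_{\mathbb{H}}\Big|^2\le\tfrac14\mathbb{E}\sup_{s\le t}\|X^n(s)\|_{\mathbb{H}}^4+C\mathbb{E}\!\!\int_0^t\!\|\sigma(r,X^{n-1}_r)\|_{L_2^0}^4 dr,
\end{align*}
with parallel estimates for the Poisson integrals and the predictable-variation terms, the $\sup$-terms being absorbed to the left. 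By (H2), each of the fourth-moment quantities of $\sigma,b,f$ evaluated at $X^{n-1}_s$ is dominated by $H(s,\mathbb{E}\|X^{n-1}_s\|_T^4)$, and direct inspection of the definition of $X^{n-1}_s$ yields $\|X^{n-1}_s\|_T\le\sup_{-h\le r\le s}\|X^{n-1}(r)\|_{\mathbb{H}}$, whence $\mathbb{E}\|X^{n-1}_s\|_T^4\le u_{n-1}(s)$ with the abbreviation $u_n(t):=\mathbb{E}\sup_{-h\le s\le t}\|X^n(s)\|_{\mathbb{H}}^4$.

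Assembling the pieces yields
\begin{align*}
u_n(t)+\tfrac{m_0^2}{4}\mathbb{E}\Big(\!\int_0^t\!\|X^n(s)\|_{\mathbb{V}}^2 ds\Big)^{\!2}+c_1^2\mathbb{E}(|\eta^n|_0^t)^2\le C_0+C_1\!\!\int_0^t\!u_n(s)ds+C_2\!\!\int_0^t\!H(s,u_{n-1}(s))ds,
\end{align*}
where $C_0$ absorbs $\mathbb{E}\sup_{-h\le s\le 0}\|\phi(s)\|_{\mathbb{H}}^4$, finite by (H4). Gronwall then gives $u_n(t)\le e^{C_1T}\bigl(C_0+C_2\!\int_0^t\!H(s,u_{n-1}(s))ds\bigr)$; letting $v$ solve $v'=cH(t,v)$ on $[0,T]$ for $c\ge C_2 e^{C_1T}$ and $v(0)\ge C_0 e^{C_1T}\vee u_0(0)$ (existing by (H2)) and using monotonicity of $H(s,\cdot)$, a straightforward induction on $n$ shows $u_n(t)\le v(t)$ for all $n$, and the same Gronwall bound transfers this uniformity to $\mathbb{E}(\int_0^T\|X^n\|_{\mathbb{V}}^2ds)^2$ and $\mathbb{E}(|\eta^n|_0^T)^2$. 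The main obstacle is coordinating these three quantities simultaneously: squaring the energy equality produces many cross-terms that must be absorbed via Young and BDG without sacrificing the coercive $m_0\|X^n\|_{\mathbb{V}}^2$ on the left, while the path-dependent arguments $X^{n-1}_s$ must be routed through (H2) in a monotone fashion so that the recursive comparison with the ODE in (H2) can close the induction.
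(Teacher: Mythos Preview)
Your proposal is correct and follows essentially the same approach as the paper: energy equality, coercivity from (H1), BDG and Lemma~\ref{lem2} for the stochastic integrals, (H2) for the growth bounds, and the ODE comparison from (H2) closed by induction. The only differences are organizational---the paper uses an exponential weight $e^{-\beta t}$ in place of your explicit Gronwall step, and it first obtains the $\mathbb{H}$- and $\mathbb{V}$-bounds by simply dropping $-\int\langle X^{n+1},d\eta^{n+1}\rangle_{\mathbb{H}}\le 0$ (from $\varphi\ge\varphi(0)=0$), then recovers the $|\eta^{n}|_0^T$ bound in a separate step via the energy equality for $\|X^{n+1}-a_0\|_{\mathbb{H}}^2$ and Lemma~\ref{lem-1}, rather than carrying all three quantities together as you do.
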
			
		\begin{proof}
			See Section \ref{appendix:prop1} of the Appendix.
		\end{proof}

		The following theorem is the main result of this section. 
		\begin{theorem}
			\label{wellposed}
			Under Assumption \ref{assumption}, there exists a unique solution $(X,\eta)$ of  Eqn.   (\ref{see}) in the sense of Definition \ref{Def-1} satisfying 
			\begin{align*}
				\left(\mathbb{E}\sup_{-h\leq t\leq T}\|X(t)\|_{\mathbb{H}}^{4}+\mathbb{E}\left(\int_{0}^{T}\|X(t)\|_{\mathbb{V}}^{2}dt\right)^2+\mathbb{E}\left(|\eta|_{0}^{T}\right)^2\right)<\infty.
			\end{align*}
			And for any $0\leq s<t\leq T$, there exists a constant $C_T>0$ such that 
			\begin{align*}
				\mathbb{E}\|X(t)-X(s)\|_{\mathbb{H}}^{4}\leq C_T(t-s)\bigg[(t-s)+\int_s^t H(r,c_T)dr\bigg],
			\end{align*}
			where $c_T:=\mathbb{E}\sup_{t\in[-h,T]}\|X(t)\|_{\mathbb{H}}^4$.
		\end{theorem}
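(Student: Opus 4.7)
The strategy is to pass to the limit in the successive-approximation scheme \eqref{see7}. By Theorem \ref{wellposed1} and Proposition \ref{prop1}, each $(X^{n},\eta^{n})$ is well-defined and the triple of quantities on the left-hand side of the bound in the theorem is uniformly bounded in $n$. I will first show that $\{X^{n}\}_{n}$ is Cauchy in $L^{2}(\Omega;\mathcal{D}([-h,T];\mathbb{H}))\cap L^{2}([-h,T]\times\Omega;\mathbb{V})$, then extract a limit $(X,\eta)$ and verify it solves \eqref{see} in the sense of Definition \ref{Def-1}, and finally handle uniqueness and the path-regularity estimate.

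For the Cauchy step, I would apply the energy equality \eqref{energy} to $X^{n+1}(t)-X^{n}(t)$. The monotonicity of $\partial\varphi$ discards the cross term $\langle X^{n+1}-X^{n},d\eta^{n+1}-d\eta^{n}\rangle_{\mathbb{H}}$, and condition (H1) yields
\begin{align*}
2{}_{\mathbb{V}}\big\langle X^{n+1}-X^{n},A(s,X^{n+1})-A(s,X^{n})\big\rangle_{\mathbb{V}^{*}}\le\beta\|X^{n+1}-X^{n}\|_{\mathbb{H}}^{2}-m_{0}\|X^{n+1}-X^{n}\|_{\mathbb{V}}^{2}.
\end{align*}
The Brownian, drift, and compensated-Poisson differences are controlled by Lemma \ref{lem2} and assumption (H3), localized in $R$ by stopping at the first time any of $\|X^{n}\|_{T}$ exceeds $R$ (justified by the uniform $L^{4}$-bound of Proposition \ref{prop1}). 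The crucial observation is that the definition of $X_{t}$ forces $\|X^{n}_{s}-X^{n-1}_{s}\|_{T}\le\sup_{r\in[-h,s]}\|X^{n}(r)-X^{n-1}(r)\|_{\mathbb{H}}$, so setting $u_{n}(t):=\mathbb{E}\sup_{-h\le r\le t}\|X^{n+1}(r)-X^{n}(r)\|_{\mathbb{H}}^{2}$ I expect a recursion of the form $u_{n+1}(t)\le C\int_{0}^{t}G_{R}(s,u_{n}(s))ds$. Passing to $\limsup_{n}$ and invoking the final clause of (H3) forces the limit to vanish, giving the Cauchy property in $\mathcal{D}([-h,T];\mathbb{H})$; the retained coercive term then propagates it to $L^{2}([-h,T]\times\Omega;\mathbb{V})$.

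With $X$ identified as the limit, I would build $\eta$ from the identity $\eta^{n}(t)=\phi(0)-X^{n}(t)+(\text{stochastic and deterministic integrals})$, whose right-hand side converges uniformly in probability by the Cauchy estimate just obtained. The uniform total-variation bound of Proposition \ref{prop1} passes to $\eta$, and the subdifferential inequality in Definition \ref{Def-1}(4) is inherited by lower semicontinuity of $\varphi$ together with Lemma \ref{lem-1}. Uniqueness proceeds identically: for two solutions $(X,\eta)$ and $(X',\eta')$, the energy equality applied to $X-X'$, combined with monotonicity and (H3), yields $Z(t)\le C\int_{0}^{t}G_{R}(s,Z(s))ds$ for $Z(t):=\mathbb{E}\sup_{-h\le r\le t}\|X(r)-X'(r)\|_{\mathbb{H}}^{2}$, which by the last clause of (H3) forces $Z\equiv 0$.

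The time-regularity bound is obtained by writing $X(t)-X(s)$ as the sum of the four integrals appearing in the equation, raising to the fourth power, applying Jensen and Lemma \ref{lem2}, and using assumption (H2) with $c_{T}$ to produce $\int_{s}^{t}H(r,c_{T})dr$ on the martingale and drift pieces; the $A$-integral and $d\eta$-increment each yield a further $(t-s)$ factor via Cauchy–Schwarz against the $\mathbb{V}$-integrability of $X$ and via Lemma \ref{lem-1}, respectively. The main obstacle is the Cauchy step: the path-dependence puts the preceding iterate inside $G_{R}$ rather than on the diagonal, so direct Gronwall on $\|X^{n}-X^{m}\|$ is unavailable; the iteration closes only because $\|X_{s}\|_{T}$ collapses to a running supremum on $[-h,s]$ and the Bihari-type vanishing in the last clause of (H3) survives the diagonal limit $n\to\infty$.
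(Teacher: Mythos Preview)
Your overall architecture matches the paper's, but three of your steps contain genuine gaps.

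\textbf{The Cauchy step.} You apply the energy equality to $X^{n+1}-X^{n}$ and obtain the recursion $u_{n+1}(t)\le C\int_{0}^{t}G_{R}(s,u_{n}(s))\,ds$, then take $\limsup_{n}$ to conclude $u_{n}\to 0$. But $\mathbb{E}\sup_{r}\|X^{n+1}(r)-X^{n}(r)\|_{\mathbb{H}}^{2}\to 0$ does \emph{not} make $\{X^{n}\}$ Cauchy: under an Osgood-type modulus $G_{R}$ you have no summability of $u_{n}$, and consecutive closeness is strictly weaker than Cauchy. The paper instead compares $X^{m}$ and $X^{n}$ for arbitrary $m,n$, obtaining $Z_{m,n}(t)\le C\int_{0}^{t}G_{R}(s,Z_{m-1,n-1}(s))\,ds$ with $Z_{m,n}(t):=\mathbb{E}\sup_{r\le t}\|X^{m}(r)-X^{n}(r)\|_{\mathbb{H}}^{2}$; since $\limsup_{m,n}Z_{m-1,n-1}=\limsup_{m,n}Z_{m,n}$, the last clause of (H3) applies directly to the double $\limsup$ and yields the Cauchy property.

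\textbf{Identifying the limit.} When you define $\eta$ as the residual, one of the ``deterministic integrals'' you claim converges is $\int_{0}^{t}A(s,X^{n}(s))\,ds$. But $A$ is only hemicontinuous from $\mathbb{V}$ to $\mathbb{V}^{*}$, so strong $L^{2}(\mathbb{V})$-convergence of $X^{n}$ does not give convergence of $A(\cdot,X^{n})$. The paper extracts a weak $L^{2}([0,T];\mathbb{V}^{*})$-limit $B$ and runs a Minty-type argument using (H1) and hemicontinuity to identify $B=A(\cdot,X)$; this step cannot be skipped.

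\textbf{The time-regularity bound.} Writing $X(t)-X(s)$ as a sum of five increments and raising to the fourth power fails on two of them: $\int_{s}^{t}A(r,X(r))\,dr$ lives only in $\mathbb{V}^{*}$, so its $\mathbb{H}$-norm is not controlled by the $\mathbb{V}$-integrability of $X$; and for $\eta(t)-\eta(s)$ you only have $\mathbb{E}(|\eta|_{0}^{T})^{2}<\infty$, not a fourth moment, so neither Cauchy--Schwarz nor Lemma \ref{lem-1} delivers the needed $\mathbb{E}\|\cdot\|_{\mathbb{H}}^{4}$ bound. The paper avoids both obstacles by applying the energy equality \eqref{energy} to $\|X(\cdot)-X(s)\|_{\mathbb{H}}^{2}$ on $[s,t]$: the $A$-term then appears as ${}_{\mathbb{V}}\langle X(r)-X(s),A(r,X(r))\rangle_{\mathbb{V}^{*}}$ and is handled by (H1), while the $d\eta$-term appears as $-\langle X(r)-X(s),d\eta(r)\rangle_{\mathbb{H}}$ and is bounded via the subdifferential inequality and (H4) by $C(1+\|X\|_{\mathbb{H}}^{l})(t-s)$. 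Squaring the resulting second-moment estimate then gives the stated fourth-moment bound.
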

		\begin{proof}
			See Section \ref{appendix:wellposed} of the Appendix.
		\end{proof}

		\subsection{A stochastic averaging principle}
		\label{sec:main_results_stochasticaveragingprinciple}
		In this section, we give our second main result (Theorem \ref{thm:averaging}) of the averaging principle that $X^{\epsilon}$ evolving by the delay-path-dependent SVI \eqref{eqn:everage_p} recalled here as follows:
		\begin{equation*}
			\begin{cases}
				dX^{\epsilon}(t)\in
				\sigma\Big(\frac{t}{\epsilon},X^{\epsilon}_{t}\Big)dW(t)+ b\Big(\frac{t}{\epsilon},X^{\epsilon}_{t}\Big)dt+\int_{U}f\Big(\frac{t}{\epsilon},X^{\epsilon}_{t},u\Big)\widetilde{N}(dt,du)\\
				\hspace*{1.5cm}+A\Big(X^{\epsilon}(t)\Big)dt-\partial\varphi(X^{\epsilon}(t))dt,\hspace*{2.6cm}t\in[0,T],\\
				X^{\epsilon}(t)=\phi(t),\hspace*{6.5cm} t\in[-h,0],
			\end{cases}
		\end{equation*}
		converges to $\overline{X}$ evolving by the  averaged equation \eqref{05} recalled here as follows:
		\begin{equation*}
			\begin{cases}
				d\overline{X}(t)\in
				\overline{\sigma}(\overline{X}_{t})dW(t)+\overline{b}(\overline{X}_{t})dt+\int_{U}\overline{f}(\overline{X}_{t},u)\widetilde{N}(dt,du)\\
				\hspace*{1.5cm}+{A}(\overline{X}(t))dt-\partial\varphi(\overline{X}(t))dt,\hspace*{2.85cm} t\in[0,T],\\
				\overline{X}(t)=\phi(t), \hspace*{6.5cm} t\in[-h,0].
			\end{cases}
		\end{equation*}
		To this aim, it is necessary to separate time and the state variable, because of which stronger conditions are assumed as outlined below. Specifically, (B1)-(B3) below represent specific instances of (H1)-(H3) in Assumption \ref{assumption}, and we introduce a new condition (B4). 
		\begin{assumption}
			\label{assumption0}
			Suppose (H4) in Assumption \ref{assumption} and the following conditions hold:
			\begin{itemize}
				\item[(B1)]   Assumption (H1) holds with $A(t,\cdot)=A(\cdot)$ independent of $t$. That is,
				$A: \mathbb{V}\to\mathbb{V}^*$ is monotone, bounded and hemicontinuous and there exists  $m_{0}>0$ and $\beta>0$ such that for any $t\geq 0$ and $v,v'\in \mathbb{V}$,
				\begin{align*}
					m_0\|v-v'\|_{\mathbb{V}}^{2}+2{}_{\mathbb{V}}\big\langle v-v' ,A(v)-A(v')\big\rangle_{\mathbb{V}^{*}}
					\leq\beta\|v-v'\|_{\mathbb{H}}^{2}.
				\end{align*}				
				
				\item[(B2)]  Assumption (H2) holds with
				$$H\Big(t,\mathbb{E}\|\xi\|_{T}^{4}\Big)=k(t)\widetilde{H}\Big(\mathbb{E}\|\xi\|_{T}^{4}\Big),$$
				for any $t\geq 0$ and  $\xi\in L^{4}(\Omega;\mathcal{D}([-h,T];\mathbb{H}))$,  
				where $k$ is a nonnegative integrable function on $[0,\infty)$ satisfying $\lim_{t\to\infty}\frac1t\int_0^tk(s)ds\leq l_0$ for some constant $l_0>0$, and $\widetilde{H}$ is continuous and nondecreasing.
				\smallskip
				
				\item[(B3)] Assumption (H3) holds with
				\begin{align*} G_{R}(t,\mathbb{E}\|\xi-\xi'\|^{2}_{T})= k(t)\widetilde{G}_R(\mathbb{E}\|\xi-\xi'\|_{T}^{2}),
				\end{align*}
				for any $t\in[0,T]$, and random variables $\xi,\xi'$ taking values in $\mathcal{D}([-h,T];\mathbb{H})$ satisfying that almost surely
				$$\|\xi\|_{T}\vee\|\xi'\|_{T}\leq R,$$
				where $k$ is given in (B2) and $\widetilde{G}_R:\mathbb{R}^{+}\rightarrow\mathbb{R}^{+}$ is a  continuous,  nondecreasing function such that $\widetilde{G}_R(0)=0$ and 
				$\int_{0^+}\frac{dz}{z+\widetilde{G}_R(z)}=\infty$.
				\smallskip
				
				\item[(B4)] For any $\zeta\in L^{2}(\Omega;\mathcal{D}([-h,T];\mathbb{H}))$, $v\in\mathbb{V}$,  and $T_1>0$, there exists a positive bounded function $\psi$ satisfying that $\lim_{T_1\rightarrow\infty}\psi(T_1)=0$ and
				\begin{align*}
					&\frac{1}{T_1}\mathbb{E}\int_{0}^{T_1}\|\sigma(s,\zeta)-\overline{\sigma}(\zeta)\|
					_{L_{2}^{0}}^{2}ds\leq\psi(T_1)(1+\mathbb{E}\|\zeta\|_{T}^{2}),\\
					&\frac{1}{T_1^{2}}\mathbb{E}\int_{0}^{T_1}\|b(s,\zeta)-\overline{b}(\zeta)\|_{\mathbb{H}}^{2}
					ds\leq\psi(T_1)(1+\mathbb{E}\|\zeta\|_{T}^{2}),\\
					&\frac{1}{T_1}\mathbb{E}\int_{0}^{T_1}\Big(\int_{U}\|f(s,\zeta,u)-\overline{f}(\zeta,u)\|
					_{\mathbb{H}}\nu(du)\Big)^2ds+\frac{1}{T_1}\mathbb{E}\int_{0}^{T_1}\int_{U}\|f(s,\zeta,u)-\overline{f}(\zeta,u)\|
					_{\mathbb{H}}^{2}\nu(du)ds\\
					&\hspace{4.5cm}\leq\psi(T_1)(1+\mathbb{E}\|\zeta\|_{T}^{2}).
				\end{align*}
			\end{itemize}
		\end{assumption}
		
		By the results in Section \ref{sec:main_results_wellposed}, we obtain the following theorem.		
		\begin{theorem}\label{thm:moment_estimates}
			Under Assumption \ref{assumption0}, there exists a unique solution of  Eqn.   (\ref{eqn:everage_p}) such that
			\begin{align}\label{11}
				\sup_{\epsilon}\mathbb{E}\sup_{t\in[-h,T]}\|X^{\epsilon}(t)\|_{\mathbb{H}}^4<\infty,
			\end{align}
			and there exists a unique solution of  Eqn.   (\ref{05}) such that
			\begin{align}\label{12}
				\mathbb{E}\sup_{-h\leq t\leq T}\|\overline{X}(t)\|_{\mathbb{H}}^{4}<\infty.
			\end{align}
		\end{theorem}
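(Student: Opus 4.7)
The plan is to deduce both statements directly from Theorem \ref{wellposed} by viewing Eqn. \eqref{eqn:everage_p} and Eqn. \eqref{05} as instances of Eqn. \eqref{see} with appropriately chosen coefficients, and then tracking how the estimates in Proposition \ref{prop1} / Theorem \ref{wellposed} depend on $\epsilon$. For Eqn. \eqref{eqn:everage_p}, I rename the coefficients as $\sigma^{\epsilon}(t,\cdot):=\sigma(t/\epsilon,\cdot)$, $b^{\epsilon}(t,\cdot):=b(t/\epsilon,\cdot)$, $f^{\epsilon}(t,\cdot,u):=f(t/\epsilon,\cdot,u)$, and $A^{\epsilon}(t,\cdot):=A(\cdot)$, and then verify Assumption \ref{assumption} for these rescaled data. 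Condition (H1) is immediate from (B1) since $A$ is $t$-independent. For (H2), the rescaled envelope becomes $H^{\epsilon}(t,z)=k(t/\epsilon)\widetilde{H}(z)$, which is locally integrable in $t$ because $k$ is, and the ODE $du/dt=cH^{\epsilon}(t,u)$ is solvable since it is equivalent (after $s=t/\epsilon$) to the one governed by $k$ and $\widetilde{H}$; the same reasoning applies to (H3) with $\widetilde{G}_R$ in place of $\widetilde{H}$. Condition (H4) is unchanged. Hence Theorem \ref{wellposed} furnishes a unique solution $X^{\epsilon}$.

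The heart of the matter is to make the bound in Theorem \ref{wellposed} uniform in $\epsilon$. Examining Proposition \ref{prop1}, the $\mathbb{H}$-norm estimate ultimately involves quantities of the form $\int_0^T H^{\epsilon}(s,c)\,ds=\int_0^T k(s/\epsilon)\widetilde{H}(c)\,ds$. Performing the change of variables $r=s/\epsilon$ gives
\begin{equation*}
\int_0^T k(s/\epsilon)\,ds=\epsilon\int_0^{T/\epsilon}k(r)\,dr
=T\cdot\frac{1}{T/\epsilon}\int_0^{T/\epsilon}k(r)\,dr,
\end{equation*}
which, by the Cesàro-type condition $\lim_{t\to\infty}\frac{1}{t}\int_0^t k(s)\,ds\leq l_0$ imposed in (B2), is bounded by a constant $C_T$ depending only on $T$ and $l_0$, uniformly in $\epsilon\in(0,1)$. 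Plugging this uniform bound back into the Gronwall/Bihari step used in the proof of Proposition \ref{prop1} (and hence of Theorem \ref{wellposed}) yields the desired $\epsilon$-uniform estimate \eqref{11}.

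For \eqref{12}, I observe that the averaged coefficients $\overline{\sigma},\overline{b},\overline{f}$ are $t$-independent, so Eqn. \eqref{05} is again an instance of Eqn. \eqref{see} (with $A$ in place of $A(t,\cdot)$). Condition (H1) follows from (B1); conditions (H2) and (H3) for $\overline{\sigma},\overline{b},\overline{f}$ are obtained by combining (B2), (B3) with the approximation bounds in (B4): writing $\overline{\sigma}(\zeta)=\sigma(s,\zeta)-(\sigma(s,\zeta)-\overline{\sigma}(\zeta))$ and averaging in $s$, one controls the fourth-moment and local-continuity norms of the averaged coefficients by their time-averaged analogs plus the $\psi(T_1)$ error, and then lets $T_1\to\infty$. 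Applying Theorem \ref{wellposed} to Eqn. \eqref{05} then produces the unique solution $\overline{X}$ with $\mathbb{E}\sup_{-h\le t\le T}\|\overline{X}(t)\|_{\mathbb{H}}^{4}<\infty$.

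The main obstacle I expect is the $\epsilon$-uniformity in \eqref{11}: one must ensure that every place in the proof of Theorem \ref{wellposed} where a Gronwall- or Bihari-type step is invoked produces a constant that is invariant under the time rescaling $t\mapsto t/\epsilon$. This reduces, as sketched above, to the Cesàro-average bound on $k$, but one has to be careful that the successive-approximation estimates in Proposition \ref{prop1} compose without picking up $\epsilon^{-1}$ factors when the jump and Wiener terms are handled via Lemma \ref{lem2} and Itô's isometry; otherwise the argument is a routine translation of Theorem \ref{wellposed} to the specific coefficients at hand.
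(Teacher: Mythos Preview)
Your proposal is correct and follows essentially the same route as the paper: verify that the rescaled coefficients $\sigma^{\epsilon},b^{\epsilon},f^{\epsilon}$ satisfy Assumption \ref{assumption} via (B1)--(B3), invoke Theorem \ref{wellposed} for existence/uniqueness, and obtain the $\epsilon$-uniform fourth-moment bound by reducing to $\int_0^T k(s/\epsilon)\,ds=T\cdot\frac{\epsilon}{T}\int_0^{T/\epsilon}k(r)\,dr$ and the Ces\`aro condition in (B2), then feed this into the Bihari step; for $\overline{X}$, verify (H2)--(H3) for $\overline{b},\overline{\sigma},\overline{f}$ by averaging in $s$ against (B2)--(B4) and sending $T_1\to\infty$, exactly as the paper does. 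The only cosmetic difference is that the paper derives the a priori bound directly on the solution $X^{\epsilon}$ (via the energy equality, as in Appendix~\ref{appendix:prop1}) rather than by tracking constants through the successive approximations, but the mechanism and the key inequality are identical.
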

		
		\begin{proof}
			For every $\epsilon>0$, by Theorem \ref{wellposed}, a unique solution $(X^{\epsilon},\eta^{\epsilon})$ exists for  Eqn.   (\ref{eqn:everage_p}). Moreover, following the arguments in Section \ref{appendix:prop1} of the Appendix, we obtain that 
			\begin{align*}
				\mathbb{E}\sup_{t\in[-h,T]}\|X^{\epsilon}(t)\|_{\mathbb{H}}^4\leq 
				C\Big(1+\mathbb{E}\sup_{t\in[-h,0]}\|\phi(t)\|_{\mathbb{H}}^4\Big)+C_T\int_0^Tk\Big(\frac{s}{\epsilon}\Big)\widetilde{H}\Big(\mathbb{E}\sup_{t\in[-h,s]}\|X^{\epsilon}(t)\|_{\mathbb{H}}^4\Big)ds.
			\end{align*}
			Lemma \ref{lem3} yields that 
			\begin{align*}
				\mathbb{E}\sup_{t\in[-h,T]}\|X^{\epsilon}(t)\|_{\mathbb{H}}^4&\leq
				J^{-1}\Bigg(J\Big(C+C\mathbb{E}\sup_{t\in[-h,0]}\|\phi(t)\|_{\mathbb{H}}^4\Big)+\frac{C_T}{\epsilon}\int_0^{T/\epsilon}k(s)ds\Bigg).
			\end{align*}
			Note that  $\lim_{\epsilon\to0}\frac T{\epsilon}\int_0^{T/\epsilon}k(s)ds\leq l_0$,
			it follows that
			\begin{align}\label{Xepsm}
				\sup_{\epsilon}\mathbb{E}\sup_{t\in[-h,T]}\|X^{\epsilon}(t)\|_{\mathbb{H}}^4<\infty.
			\end{align}
			
			We now show that  $\overline{b}$, $\overline{\sigma}$ and $\overline{f}$ satisfy (H2) and (H3) in Assumption \ref{assumption}. 	
			For any random variables $\xi$ valued in $\mathcal{D}([-h,T];\mathbb{H})$,
			\begin{align*}
				\|\overline{b}(\xi)\|_{\mathbb{H}}&\leq \frac1{T_1}\int_0^{T_1}\|b(s,\xi)-\overline{b}(\xi)\|_{\mathbb{H}}ds+\left\|\frac1{T_1}\int_0^{T_1}b(s,\xi)ds\right\|_{\mathbb{H}},
			\end{align*}
			and thus\begin{align*}
				\mathbb{E}\|\overline{b}(\xi)\|_{\mathbb{H}}^4&\leq C\psi(T_1)^2(1+\mathbb{E}\|\xi\|_T^4)+\frac1{T_1}\int_0^{T_1}k(s)\widetilde{H}(\mathbb{E}\|\xi\|_{T}^4)ds\leq C\widetilde{H}(\mathbb{E}\|\xi\|_{T}^4),
			\end{align*}
			where in the last inequality we sent $T_1\to\infty$ and applied (B4). Furthermore, for  $\xi,\xi'$ valued in $\mathcal{D}([-h,T];\mathbb{H})$ with $\|\xi\|_T\vee\|\xi'\|_T\leq R ~a.s.$, 
			\begin{align*}
				\mathbb{E}\|\overline{b}(\xi)-\overline{b}(\xi')\|_{\mathbb{H}}^{2}
				&\leq \frac{3}{T_{1}^{2}}\mathbb{E}\left\|\int_{0}^{T_{1}}\big[b(s,\xi)-\overline{b}(\xi)\big]ds\right\|_{\mathbb{H}}^{2}+\frac{3}{T_{1}}\mathbb{E}\int_{0}^{T_{1}}\|b(s,\xi)-b(s,\xi')\|_{\mathbb{H}}^{2}ds\nonumber\\
				&\quad+\frac{3}{T_{1}^{2}}\mathbb{E}\left\|\int_{0}^{T_{1}}\big[b(s,\xi')-\overline{b}(\xi')\big]ds\right\|_{\mathbb{H}}^{2}\nonumber\\
				&\leq C\psi(T_{1})\Big(1+\mathbb{E}\|\xi\|_{T}^{2}+\mathbb{E}\|\xi'\|_{T}^{2}\Big)+\frac3{T_1}\widetilde{G}_R\Big(\mathbb{E}\|\xi-\xi'\|_{T}^{2}\Big)\int_0^{T_1}k(s)ds.
			\end{align*}
			Sending $T_{1}\to\infty$, we obtain
			\begin{align}\label{barb}
				\mathbb{E}\|\overline{b}(\xi)-\overline{b}(\xi')\|_{\mathbb{H}}^{2}\leq C\widetilde{G}_R\Big(\mathbb{E}\|\xi-\xi'\|_{T}^{2}\Big).
			\end{align}
			Similarly, 
			\begin{align*}
				\mathbb{E}\|\overline{\sigma}(\xi)\|_{\mathbb{H}}^4&\leq C\widetilde{H}(\mathbb{E}\|\xi\|_{T}^4),\\
				\mathbb{E}\Big(\int_{U}\|\overline{f}(\xi,u)\|_{\mathbb{H}}^2\nu(du)\Big)^2
				&+\mathbb{E}\int_{U}\|\overline{f}(\xi,u)\|_{\mathbb{H}}^4\nu(du)\leq C\widetilde{H}(\mathbb{E}\|\xi\|_{T}^4).
			\end{align*}
			Additionally,
			\begin{align*}
				\mathbb{E}\|\overline{\sigma}(\xi)-\overline{\sigma}(\xi')\|_{L_{2}^{0}}^{2}\leq C\widetilde{G}_R\Big(\mathbb{E}\|\xi-\xi'\|_{T}^{2}\Big),\\
				\mathbb{E}\int_{U}\|\overline{f}(\xi,u)-\overline{f}(\xi',u)\|_{\mathbb{H}}^2\nu(du)
				\leq C\widetilde{G}_R\Big(\mathbb{E}\|\xi-\xi'\|_{T}^{2}\Big).
			\end{align*}
			Hence, Assumption \ref{assumption} holds and then there exists 
			a unique solution $(\overline{X},\overline{\eta})$ 
			to  Eqn.   (\ref{05}) by Theorem \ref{wellposed}. Similar to Eqn. \eqref{Xepsm}, we have
			\begin{align*}
				\mathbb{E}\sup_{-h\leq t\leq T}\|\overline{X}(t)\|_{\mathbb{H}}^{4}<\infty\quad\text{and}\quad 
				\mathbb{E}\int_{-h}^T\|\overline{X}(t)\|_{\mathbb{V}}^{2}dt<\infty.
			\end{align*}
		\end{proof}

		
		The following theorem is the main result of this section.
		\begin{theorem}\label{thm:averaging}
			Under Assumption \ref{assumption0}, 
			\begin{align}\label{probability}
				\sup_{0\le s\le T}\|X^{\epsilon}(s)-\overline{X}(s)\|_{\mathbb{H}}\xrightarrow{\mathbb{P}} 0,\qquad \text{as}\ \  \epsilon\rightarrow 0.
			\end{align}
		\end{theorem}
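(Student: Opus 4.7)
The plan is to apply a Khasminskii-style time-discretization argument together with Bihari's inequality, preceded by a stopping-time localization to deal with the $R$-dependence in (B3). For each $R>0$ define
\[\tau^\epsilon_R := \inf\bigl\{t \ge 0 : \|X^\epsilon(t)\|_{\mathbb{H}} \vee \|\overline{X}(t)\|_{\mathbb{H}} > R\bigr\} \wedge T.\]
By the uniform moment estimates \eqref{11}--\eqref{12} and Markov's inequality, $\sup_\epsilon \mathbb{P}(\tau_R^\epsilon < T) \to 0$ as $R \to \infty$, so it suffices to prove that for every fixed $R$
\[u_R^\epsilon(T) := \mathbb{E}\sup_{0 \le s \le T \wedge \tau_R^\epsilon}\|X^\epsilon(s)-\overline{X}(s)\|_{\mathbb{H}}^2 \longrightarrow 0 \quad \text{as } \epsilon\to 0;\]
\eqref{probability} then follows by letting $\epsilon\to 0$ and then $R\to\infty$.

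Next I would apply the energy equality \eqref{energy} to the difference $X^\epsilon-\overline{X}$, viewed as the solution of an SVI with the same monotone operator $A$ and subdifferential $\partial\varphi$. Condition (B1) dominates the $A$-term by $\beta\int_0^t\|X^\epsilon-\overline{X}\|_{\mathbb{H}}^2 ds$ (discarding a favorable $-m_0\|\cdot\|_{\mathbb{V}}^2$); the subdifferential contribution $-2\int_0^t \langle X^\epsilon-\overline{X},d\eta^\epsilon-d\overline{\eta}\rangle_{\mathbb{H}}$ is non-positive by item~4 of Definition \ref{Def-1} (apply the variational inequality with $\alpha=\overline{X}$ and symmetrically); and the Brownian and compensated-Poisson martingale terms are handled by Burkholder (Lemma \ref{lem2}) after taking $\sup_t$ and $\mathbb{E}$. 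For each of the drift, diffusion, and jump discrepancies I would split
\[b(s/\epsilon, X^\epsilon_s)-\overline{b}(\overline{X}_s)=\bigl[b(s/\epsilon,X^\epsilon_s)-b(s/\epsilon,\overline{X}_s)\bigr]+\bigl[b(s/\epsilon,\overline{X}_s)-\overline{b}(\overline{X}_s)\bigr],\]
and similarly for $\sigma$ and $f$. The Lipschitz-like pieces are controlled via (B3) together with the estimate $\|X^\epsilon_s-\overline{X}_s\|_T \le \sup_{-h\le r\le s}\|X^\epsilon(r)-\overline{X}(r)\|_{\mathbb{H}}$ (which uses the definition of the path modifier $X_t$ and the common initial path $\phi$), producing contributions of the form $C\int_0^t k(s/\epsilon)\widetilde{G}_R(u_R^\epsilon(s))\,ds$.

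The hard part is the purely averaging pieces, such as $\mathbb{E}\int_0^t \langle X^\epsilon-\overline{X},b(s/\epsilon,\overline{X}_s)-\overline{b}(\overline{X}_s)\rangle_{\mathbb{H}}\,ds$, which (B4) cannot bound directly because it presupposes a state variable independent of the integration variable. To handle this, I would partition $[0,T]$ into blocks $[i\Delta,(i+1)\Delta)$ of length $\Delta=\epsilon\theta(\epsilon)$ with $\theta(\epsilon)\to\infty$ and $\Delta\to 0$, freeze $\overline{X}_s$ at $\overline{X}_{i\Delta}$ on each block, apply (B4) with $T_1=\theta(\epsilon)$ after rescaling $r=s/\epsilon$, and bound the freezing error via the quartic modulus estimate $\mathbb{E}\|\overline{X}(s)-\overline{X}(i\Delta)\|_{\mathbb{H}}^4 \le C_T\Delta\bigl[\Delta+\int_{i\Delta}^s H(r,c_T)\,dr\bigr]$ of Theorem \ref{wellposed}. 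Summing over the $\lfloor T/\Delta\rfloor$ blocks and invoking $\psi(\theta(\epsilon))\to 0$ yields an averaging error $\alpha(\epsilon)\to 0$. Assembling all the pieces gives
\[u_R^\epsilon(t) \le \alpha(\epsilon) + C\int_0^t \bigl[u_R^\epsilon(s)+\widetilde{G}_R(u_R^\epsilon(s))\bigr]ds,\]
and Bihari's inequality (Lemma \ref{lem3}) together with the non-Lipschitz growth condition $\int_{0^+}\tfrac{dz}{z+\widetilde{G}_R(z)}=\infty$ from (B3) delivers $u_R^\epsilon(T)\to 0$. The principal obstacle is calibrating $\Delta=\epsilon\theta(\epsilon)$ so that both the freezing error and $\psi(\theta(\epsilon))$ vanish simultaneously, all the while correctly bookkeeping the path-dependent quantity $X^\epsilon_s$ inside every error term.
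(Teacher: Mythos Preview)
Your proposal is correct and follows essentially the same route as the paper: stopping-time localization via the uniform fourth moments, the energy equality for $X^\epsilon-\overline{X}$ (the paper weights by $e^{-\beta t}$ to absorb the (B1) term, whereas you carry it into the Bihari integrand---either works), the four-fold splitting of each coefficient, the Khasminskii block discretization with $\Delta=\epsilon\theta(\epsilon)$, and then Bihari's inequality.

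One technical refinement you should make explicit: the freezing error that enters via (B3) is $\widetilde G_R\bigl(\mathbb{E}\|\overline{X}_s-\overline{X}_{s(\Delta)}\|_T^2\bigr)$, and $\|\overline{X}_s-\overline{X}_{s(\Delta)}\|_T$ is a \emph{path-norm} supremum over reference times, not just the point increment $\|\overline{X}(s)-\overline{X}(i\Delta)\|_{\mathbb{H}}$. The paper handles this by first proving $\mathbb{E}\|\overline{X}(t)-\overline{X}(s)\|_{\mathbb{H}}^4\le C_T(t-s)^2$ and then invoking Kolmogorov's continuity criterion to get $\mathbb{E}\sup_{|t-s|\le\delta}\|\overline{X}(t)-\overline{X}(s)\|_{\mathbb{H}}^4\le C\delta^{\alpha}$ for some $\alpha\in(0,1)$, with $\delta=\Delta+\sup_{|t-s|\le\Delta}|a(t)-a(s)|$ accounting for the delay function $a(\cdot)$ in the definition of $X_t$. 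Your closing caveat about ``bookkeeping the path-dependent quantity'' points exactly at this, but the Kolmogorov step is what makes it work.
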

		
		\begin{proof}
			Set 
			$$
			\tau_R:=\inf\Big\{t\geq0; \|X^{\epsilon}( t)\|_{\mathbb{H}}\vee\|\overline{X}\|_{\mathbb{H}}>R\Big\}.
			$$
			It follows from Theorem \ref{thm:moment_estimates} that 
			\begin{align}\label{inprob}
				\lim_{R\to\infty}\sup_{\epsilon}\mathbb{P}\big(\tau_R>T\big)=1.
			\end{align}
			Applying the energy equality \eqref{energy} to $e^{-\beta t}\|X^{\epsilon}( t)-\overline{X}( t)\|_{\mathbb{H}}^{2}$, for $t\leq T\wedge\tau_R$,
			\begin{align*}
				e^{-\beta t}\|X^{\epsilon}(t)-\overline{X}(t)\|_{\mathbb{H}}^{2}
				&=2\int_{0}^{t}e^{-\beta s}\Big\langle X^{\epsilon}(s)-\overline{X}(s),\Big(\sigma\Big(\frac{s}{\epsilon},X^{\epsilon}_{s}\Big)-\overline{\sigma}(\overline{X}_{s})\Big)dW(s)\Big\rangle_{\mathbb{H}}ds\\
				&\quad+2\int_{0}^{t}e^{-\beta s}\Big\langle X^{\epsilon}(s)-\overline{X}(s),b\Big(\frac{s}{\epsilon},X^{\epsilon}_{s}\Big)-\overline{b}(\overline{X}_{s})\Big\rangle_{\mathbb{H}}ds\\
				&\quad+2\int_{0}^{t}e^{-\beta s}{}_{\mathbb{V}}\Big\langle X^{\epsilon}(s)-\overline{X}(s),A\big(X^{\epsilon}(s)\big)-{A}(\overline{X}(s))\Big\rangle_{\mathbb{V}^{*}}ds\\
				&\quad-2\int_{0}^{t}e^{-\beta s}\Big\langle X^{\epsilon}(s)-\overline{X}(s),d\eta^{\epsilon}(s)-d\overline{\eta}(s)\Big\rangle_{\mathbb{H}}\\
				&\quad+2\int_{0}^{t}e^{-\beta s}\int_{U}\Big\langle X^{\epsilon}(s)-\overline{X}(s),f\Big(\frac{s}{\epsilon},X^{\epsilon}_{s},u\Big)-\overline{f}(\overline{X}_{s},u)\Big\rangle_{\mathbb{H}}\widetilde{N}(ds,du)\\
				&\quad+\int_{0}^{t}e^{-\beta s}\Big\|\sigma\Big(\frac{s}{\epsilon},X^{\epsilon}_{s}\Big)-\overline{\sigma}(\overline{X}_{s})\Big\|_{L_{2}^{0}}^{2}ds-\beta \int_{0}^{t}\|X^{\epsilon}(s)-\overline{X}(s)\|_{\mathbb{H}}^{2}ds\\
				&\quad+\int_{0}^{t}e^{-\beta s}\int_{U}\Big\|f\Big(\frac{s}{\epsilon},X^{\epsilon}_{s},u\Big)-\overline{f}(\overline{X}_{s},u)\Big\|_{\mathbb{H}}^{2}\widetilde{N}(ds,du)\\\
				&\quad+\int_{0}^{t}e^{-\beta s}\int_{U}\Big\|f\Big(\frac{s}{\epsilon},X^{\epsilon}_{s},u\Big)-\overline{f}(\overline{X}_{s},u)\Big\|_{\mathbb{H}}^{2}\nu(du)ds.
			\end{align*}	
			Divide $[0,T]$ into intervals of the same length $\Delta=\epsilon\theta(\epsilon)$ satifying that $\theta(\epsilon)\to\infty$ and $\Delta\to0$ as $\epsilon\to0$. For $s\in[i\Delta, (i+1)\Delta)$, denote $s(\Delta):=i\Delta$. 
			We have
			\begin{align*}
				&2\mathbb{E}\int_{0}^{T\wedge\tau_R}e^{-\beta s}\Big\langle X^{\epsilon}(s)-\overline{X}(s),b\Big(\frac{s}{\epsilon},X^{\epsilon}_{s}\Big)-\overline{b}(\overline{X}_{s})\Big\rangle_{\mathbb{H}}ds\\
				&\leq\frac{1}{8}\mathbb{E}\sup_{0\leq s\leq T\wedge\tau_R}\Big(e^{-\beta s}\|X^{\epsilon}(s)-\overline{X}(s)\|_{\mathbb{H}}^{2}\Big)+C\mathbb{E}\int_{0}^{T\wedge\tau_R}e^{-\beta s}\Big\|b\Big(\frac{s}{\epsilon},X^{\epsilon}_{s}\Big)-b\Big(\frac{s}{\epsilon},\overline{X}_s\Big)\Big\|_{\mathbb{H}}^2ds\\
				&\quad+C\mathbb{E}\int_{0}^{T\wedge\tau_R}e^{-\beta s}\Big\|b\Big(\frac{s}{\epsilon},\overline{X}_s\Big)-b\Big(\frac{s}{\epsilon},\overline{X}_{s(\Delta)}\Big)\Big\|_{\mathbb{H}}^2ds\\
				&\quad+C\mathbb{E}\int_{0}^{T\wedge\tau_R}e^{-\beta s}\Big\|b\Big(\frac{s}{\epsilon},\overline{X}_{s(\Delta)}\Big)-\overline{b}(\overline{X}_{s(\Delta)})\Big\|_{\mathbb{H}}^2ds\\
				&\quad+C\mathbb{E}\int_{0}^{T\wedge\tau_R}e^{-\beta s}\big\|\overline{b}(\overline{X}_{s(\Delta)})-\overline{b}(\overline{X}_{s})\big\|_{\mathbb{H}}^{2}ds\\
				&\leq\frac{1}{8}\mathbb{E}\sup_{0\leq s\leq T\wedge\tau_R}\Big(e^{-\beta s}\|X^{\epsilon}(s)-\overline{X}(s)\|_{\mathbb{H}}^{2}\Big)+C\int_{0}^{T\wedge\tau_R}e^{-\beta s}k\Big(\frac{s}{\epsilon}\Big)\widetilde{G}_R\Big(\mathbb{E}\|X_s^{\epsilon}-\overline{X}_s\|_T^2\Big)ds\\
				&\quad+C\int_{0}^{T\wedge\tau_R}e^{-\beta s}k\Big(\frac{s}{\epsilon}\Big)\widetilde{G}_R\Big(\mathbb{E}\|\overline{X}_s-\overline{X}_{s(\Delta)}\|_T^2\Big)ds+C\sum_{i}\mathbb{E}\int_{i\Delta}^{(i+1)\Delta}\Big\|b\Big(\frac{s}{\epsilon},\overline{X}_{i\Delta}\Big)-\overline{b}(\overline{X}_{i\Delta})\Big\|_{\mathbb{H}}^2ds\\
				&\quad+C\int_{0}^{T\wedge\tau_R}e^{-\beta s}\widetilde{G}_R\Big(\mathbb{E}\|\overline{X}_s-\overline{X}_{s(\Delta)}\|_T^2\Big)ds.
			\end{align*}
			By  (B4) in Assumption \ref{assumption0} and Eqn.   (\ref{barb}), 
			\begin{align}\label{barin}
				C\sum_{i}\mathbb{E}\int_{i\Delta}^{(i+1)\Delta}\Big\|b\Big(\frac{s}{\epsilon},\overline{X}_{i\Delta}\Big)-\overline{b}(\overline{X}_{i\Delta})\Big\|_{\mathbb{H}}^2ds
				&\leq C\Delta\sum_{i}\psi(\theta(\epsilon))\Big(1+\mathbb{E}\|\overline{X}\|_T^2\Big)\nonumber\\
				&\leq C\psi(\theta(\epsilon))\Big(1+\mathbb{E}\|\overline{X}\|_T^2\Big).\end{align}
			Applying Eqn. \eqref{12} and the energy equality \eqref{energy}, with arguments similar to the proof of Theorem \ref{wellposed} in Section \ref{appendix:prop1} of the Appendix, we obtain that for any $0\leq s\leq t\leq T$,
			\begin{align*}
				\mathbb{E}\|\overline{X}(t)-\overline{X}(s)\|_{\mathbb{H}}^4\leq C_T(t-s)^2.
			\end{align*}
			Hence, by Kolmogorov's continuity criterion, denoting $\delta:=\Delta+\sup_{|t-s|\leq\Delta}|a(t)-a(s)|$, we have
			\begin{align}\label{conti}
				\mathbb{E}\|\overline{X}_s-\overline{X}_{s(\Delta)}\|_T^4
				&\leq\mathbb{E}\sup_{|t-s|\leq \delta}\|\overline{X}(t)-\overline{X}(s)\|_{\mathbb{H}}^4\leq C\delta^{\alpha}, \qquad\alpha\in(0,1).
			\end{align}
			Therefore, 
			\begin{align*}
				&\hspace{-0.7cm}2\mathbb{E}\int_{0}^{T\wedge\tau_R}e^{-\beta s}\left\langle X^{\epsilon}(s)-\overline{X}(s),\; b\Big(\frac{s}{\epsilon},X^{\epsilon}_{s}\Big)-\overline{b}(\overline{X}_{s})\right\rangle_{\mathbb{H}}ds\\
				&\le\frac{1}{8}\mathbb{E}\sup_{0\le s\le T\wedge\tau_R}e^{-\beta s}\|X^{\epsilon}(s)-\overline{X}(s)\|_{\mathbb{H}}^{2}+C\psi(\theta(\epsilon))(1+\mathbb{E}\|\overline{X}\|_T^2)+C_{T}\widetilde{G}_R(\delta^{\alpha/2})\\
				&\quad+C\int_{0}^{T\wedge\tau_R}e^{-\beta s}k\Big(\frac{s}{\epsilon}\Big)\widetilde{G}_R(\mathbb{E}\|X^{\epsilon}-\overline{X}\|_s^2)ds.
			\end{align*}
			Similar to Eqn. \eqref{barin}, we have  
			\begin{align*}
				&\hspace{-0.7cm}\mathbb{E}\int_0^{T\wedge\tau_R}e^{-\beta s}\Big\|\sigma\Big(\frac{s}{\epsilon},X^{\epsilon}_s\Big)-\overline{\sigma}(\overline{X}_s)\Big\|_{L_2^0}^2ds\\
				&\leq C\psi(\theta(\epsilon))(1+\mathbb{E}\|\overline{X}\|_T^2)+C_{T}\widetilde{G}_R(\delta^{\alpha/2})
				+C\int_{0}^{T\wedge\tau_R}e^{-\beta s}k\Big(\frac{s}{\epsilon}\Big)\widetilde{G}(\mathbb{E}\|X^{\epsilon}-\overline{X}\|_s^2)ds,
			\end{align*}
			and
			\begin{align*}
				&\hspace{-0.7cm}\mathbb{E}\int_{0}^{T\wedge\tau_R}\Big(\int_{U}\Big\|f\Big(\frac{r}{\epsilon},X^{\epsilon}_{r},u\Big)-\overline{f}(\overline{X}_{r},u)\Big\|_{\mathbb{H}}\nu(du)\Big)^2dr\\
				&\leq C\psi(\theta(\epsilon))(1+\mathbb{E}\|\overline{X}\|_T^2)+C_{T}\widetilde{G}_R(\delta^{\alpha/2})
				+C\int_{0}^{T\wedge\tau_R}e^{-\beta s}k\Big(\frac{s}{\epsilon}\Big)\widetilde{G}(\mathbb{E}\|X^{\epsilon}-\overline{X}\|_s^2)ds.
			\end{align*}
			It follows from the Burkh\"older-Davis-Gundy (BDG) inequality and Young's inequality that
			\begin{align*}
				&\hspace{-0.7cm}\mathbb{E}\sup_{0\leq s\leq T\wedge\tau_R}2\Bigg|\int_{0}^{s}e^{-\beta r}\Big\langle X^{\epsilon}(r)-\overline{X}(r),\;\Big(\sigma\Big(\frac{r}{\epsilon},X^{\epsilon}_{r}\Big)-\overline{\sigma}(\overline{X}_{r})\Big)dW_{r}\Big\rangle_{\mathbb{H}}\Bigg|\\
				&\leq\frac{1}{8}\mathbb{E}\sup_{0\le s\le T\wedge\tau_R}e^{-\beta s}\|X^{\epsilon}(s)-\overline{X}(s)\|_{\mathbb{H}}^{2}
				+C\mathbb{E}\int_{0}^{T\wedge\tau_R}e^{-\beta r}\Big\|\sigma\Big(\frac{r}{\epsilon},X^{\epsilon}_{r}\Big)-\overline{\sigma}(\overline{X}_{r})\Big\|_{L_2^0}^2dr\\
				&\leq \frac{1}{8}\mathbb{E}\sup_{0\le s\le T\wedge\tau_R}e^{-\beta s}\|X^{\epsilon}(s)-\overline{X}(s)\|_{\mathbb{H}}^{2}+ C_T\widetilde{G}_R(\delta^{\alpha/2})+C\psi(\theta(\epsilon))(1+\mathbb{E}\|\overline{X}\|_T^2)\\
				&\quad+C\int_0^{T\wedge\tau_R}k\Big(\frac{s}{\epsilon}\Big)\widetilde{G}_R\big(\mathbb{E}\|X^{\epsilon}-\overline{X}\|_s^2\big)ds.
			\end{align*}
			By Lemma \ref{lem2} and (B4) in Assumption \ref{assumption0},
			\begin{align*}
				&\mathbb{E}\sup_{0\leq s\leq T\wedge\tau_R}2\Bigg|\int_{0}^{s}\int_{U}e^{-\beta s}\Big\langle X^{\epsilon}(r)-\overline{X}(r),\; f\Big(\frac{r}{\epsilon},X^{\epsilon}_{r},u\Big)-\overline{f}(\overline{X}_{r},u)\Big\rangle_{\mathbb{H}}\widetilde{N}(dr,du)\Bigg|\\
				&\leq C \mathbb{E}\int_{0}^{T\wedge\tau_R}\int_{U}e^{-\beta r}\|X^{\epsilon}(r)-\overline{X}(r)\|_{\mathbb{H}}\Big\|f\Big(\frac{r}{\epsilon},X^{\epsilon}_{r},u\Big)-\overline{f}(\overline{X}_{r},u)\Big\|_{\mathbb{H}}\nu(du)dr\\
				&\leq\frac{1}{6}\mathbb{E}\sup_{0\leq s\leq T\wedge\tau_R}e^{-\beta s}\|X^{\epsilon}(s)-\overline{X}(s)\|_{\mathbb{H}}^{2}+C{T} \mathbb{E}\int_{0}^{T\wedge\tau_R}\Big(\int_{U}\Big\|f\Big(\frac{r}{\epsilon},X^{\epsilon}_{r},u\Big)-\overline{f}(\overline{X}_{r},u)\Big\|_{\mathbb{H}}\nu(du)\Big)^2dr\\
				&\leq \frac{1}{6}\mathbb{E}\sup_{0\le s\le T\wedge\tau_R}e^{-\beta s}\|X^{\epsilon}(s)-\overline{X}(s)\|_{\mathbb{H}}^{2}+ C_T\widetilde{G}_R(\delta^{\alpha/2})+C\psi(\theta(\epsilon))(1+\mathbb{E}\|\overline{X}\|_T^2)\\
				&\quad+C\int_0^{T\wedge\tau_R}k\Big(\frac{s}{\epsilon}\Big)\widetilde{G}_R\big(\mathbb{E}\|X^{\epsilon}-\overline{X}\|_s^2\big)ds.
			\end{align*}
			By Lemma \ref{lem2} and the Young's inequality, similar to Eqn.  \eqref{barin}, we have
			\begin{align*}
				&\hspace{-0.5cm}\mathbb{E}\sup_{0\leq s\leq T\wedge\tau_R}\int_{0}^{s}\int_{U}e^{-\beta s}\Big\|f\Big(\frac{r}{\epsilon},X^{\epsilon}_{r},u\Big)-\overline{f}(\overline{X}_{r},u)\Big\|_{\mathbb{H}}^{2}\widetilde{N}(dr,du)\\
				&\leq C\int_{0}^{T\wedge\tau_R}\int_{U}e^{-\beta s}\Big\|f\Big(\frac{r}{\epsilon},X^{\epsilon}_{r},u\Big)-\overline{f}(\overline{X}_{r},u)\Big\|_{\mathbb{H}}^{2}\nu(du)dr\\
				&\leq C_T\widetilde{G}_R(\delta^{\alpha/2})+C\psi(\theta(\epsilon))(1+\mathbb{E}\|\overline{X}\|_T^2)+C\int_0^{T\wedge\tau_R}k\Big(\frac{s}{\epsilon}\Big)\widetilde{G}_R\big(\mathbb{E}\|X^{\epsilon}-\overline{X}\|_s^2\big)ds.
			\end{align*}
			Condition (B1) in Assumption \ref{assumption0} yields
			\begin{align*}
				&\hspace{-0.7cm}2\int_{0}^{T\wedge\tau_R}e^{-\beta s}{}_{\mathbb{V}}\Big\langle X^{\epsilon}(s)-\overline{X}(s),A(X^{\epsilon}(s))-{A}(\overline{X}(s))\Big\rangle_{\mathbb{V}^{*}}ds\\
				&\leq 2\beta\int_{0}^{T\wedge\tau_R}e^{-\beta s}\|X^{\epsilon}(s)-\overline{X}(s)\|_{\mathbb{H}}^2ds-2m_0\int_{0}^{T\wedge\tau_R}e^{-\beta s}\|X^{\epsilon}(s)-\overline{X}(s)\|_{\mathbb{V}}^2ds.
			\end{align*}
			The above estimates together with Theorem \ref{thm:moment_estimates} yields
			\begin{align*}
				&\hspace{-0.5cm}\mathbb{E}\sup_{0\leq s\leq T\wedge\tau_R}e^{-\beta s}\|X^{\epsilon}(s)-\overline{X}(s)\|_{\mathbb{H}}^{2}\\
				\leq& \frac{1}{2}\mathbb{E}\sup_{0\leq s\leq T\wedge\tau_R}e^{-\beta s}\|X^{\epsilon}(s)-\overline{X}(s)\|_{\mathbb{H}}^{2}+C_T\widetilde{G}_R(\delta^{\alpha/2})+C\psi(\theta(\epsilon))(1+\mathbb{E}\|\overline{X}\|_T^2)\\
				&+C\int_0^T\Big[\mathbb{E}\sup_{s\in[0,t\wedge\tau_R]}\|X^{\epsilon}(s)-\overline{X}(s)\|_{\mathbb{H}}^2+k\Big(\frac{t}{\epsilon}\Big)\widetilde{G}_R\Big(\mathbb{E}\sup_{s\in[0,t\wedge\tau_R]}\|X^{\epsilon}(s)-\overline{X}(s)\|_{\mathbb{H}}^2\Big)\Big]dt.
			\end{align*}
			Set
			\begin{align}
				\label{eqn:reviewer2_1}
				z(T):=\limsup_{\epsilon\rightarrow0}\mathbb{E}\sup_{0\leq s\leq T\wedge\tau_R}\|X^{\epsilon}(s)-\overline{X}(s)\|_{\mathbb{H}}^{2}.
			\end{align}
			Then 
			\begin{align*}
				z(T)&\leq e^{\beta T}\limsup_{\epsilon\rightarrow0}\mathbb{E}\sup_{0\leq s\leq T\wedge\tau_R}e^{-\beta s}\|X^{\epsilon}(s)-\overline{X}(s)\|_{\mathbb{H}}^{2}\\
				&\leq C_T\widetilde{G}_R(\delta^{\alpha/2})+C\psi(\theta(\epsilon))(1+\mathbb{E}\|\overline{X}\|_T^2)\\
				&\quad+C_T\int_0^T\Big[\mathbb{E}\sup_{s\in[0,t\wedge\tau_R]}\|X^{\epsilon}(s)-\overline{X}(s)\|_{\mathbb{H}}^2+k\Big(\frac{t}{\epsilon}\Big)\widetilde{G}_R\Big(\mathbb{E}\sup_{s\in[0,t\wedge\tau_R]}\|X^{\epsilon}(s)-\overline{X}(s)\|_{\mathbb{H}}^2\Big)\Big]dt.
			\end{align*}
			Noting that $\psi(\theta(\epsilon))\to0$ and $\delta\to0$ as $\epsilon\to0$, and  
			\begin{align}
				\label{eqn:reviewer2_2}
				z(T)\leq C_{T}\int_{0}^{T}\Big(1+k\Big(\frac{t}{\epsilon}\Big)\Big)\big[z(t)+\widetilde{G}_R(z(t))\big]ds,
			\end{align}
			which together with Bihari's inequality in Lemma \ref{lem3} yield
			$
			z(T)\equiv0.
			$
			Moreover by Eqn. \eqref{inprob}, for any $\delta>0$, 
			\begin{align*}
				\mathbb{P}\Big(\sup_{t\in[0,T]}\|X^{\epsilon}(s)-\overline{X}(s)\|_{\mathbb{H}}^{2}>\delta\Big)
				\leq&\mathbb{P}\Big(\sup_{t\in[0,T\wedge\tau_R]}\|X^{\epsilon}(s)-\overline{X}(s)\|_{\mathbb{H}}^{2}>\delta\Big)
				+\mathbb{P}\Big(\tau_R\leq T\Big)\\
				&\to0, \qquad\mbox{as} ~~~\epsilon\to0.
			\end{align*}
		\end{proof}	
		
		\begin{remark}\label{rem}
			Through the change of variables, we can see that $(Y^{\epsilon}(t),K^{\epsilon}(t)):=(X^{\epsilon}(\epsilon t),\eta^{\epsilon}(\epsilon t))$ solves the following equation:
			\begin{equation}\label{Yeps}
				\begin{cases}
					dY^{\epsilon}(t)=
					\sqrt{\epsilon}\sigma(t,Y^{\epsilon}_{t})d\widetilde{W}(t)+ \epsilon b({t},Y^{\epsilon}_{t})dt+\int_{U}f({t},Y^{\epsilon}_{t},u)\widetilde{N}^{\epsilon}(dt,du)\\
					\hspace*{1.5cm}+\epsilon A(Y^{\epsilon}(t))dt- dK^{\epsilon}(t),\hspace*{3.35cm}t\in[0,T],\\
					Y^{\epsilon}(t)=\phi(t),\hspace*{6.5cm} t\in[-h,0],
				\end{cases}
			\end{equation}
			where $\widetilde{N}^{\epsilon}(dt,du)={N}^{\epsilon}(dt,du)-\epsilon\nu(du)dt$ and $N^{\epsilon}$ is the Poisson random measure with intensity $\epsilon \nu$. Moreover, $(\overline{Y}(t),\overline{K}(t)):=(\overline{X}(\epsilon t), \overline{\eta}(\epsilon t))$ solves the following equation:
			\begin{equation}\label{Ybar}
				\begin{cases}
					d\overline{Y}(t)=
					\sqrt{\epsilon}\overline{\sigma}(\overline{Y}_{t})d\widetilde{W}(t)+ \epsilon \overline{b}(\overline{Y}_{t})dt+\int_{U}\overline{f}(\overline{Y}_{t},u)\widetilde{N}^{\epsilon}(dt,du)\\
					\hspace*{1.5cm}+\epsilon  {A}(\overline{Y}(t))dt- d\overline{K}(t),\hspace*{3.5cm}t\in[0,T],\\
					\overline{Y}(t)=\phi(t),\hspace*{6.5cm} t\in[-h,0].
				\end{cases}
			\end{equation}
			The main results in \cite{xu2014averaging}, \cite {mao2016averaging} and \cite{mao2019averaging} are  devoted to multivalued SDEs, SFDEs, and finite-dimensional multivalued SDEs with jumps, respectively.
			They have proved that $\sup_{t\in[0,T]}|Y^{\epsilon}(t)-\overline{Y}(t)|\to0$ in the sense of convergence in probability or in $L^2$.  Clearly, our result in Theorem \ref{thm:averaging} generalizes theirs. 
		\end{remark}

		Furthermore, we have the following corollary.
		\begin{corollary}\label{cor1}
			Suppose $\widetilde{G}_R$ in Assumption \ref{assumption0} does not depend on $R$, i.e., Assumption \ref{assumption0} holds globally. Then  for any $\delta>0$, there exists $\epsilon_0\in(0,1)$ such that
			\begin{align}\label{rate}
				\sup_{\epsilon\in(0,\epsilon_0)}\mathbb{E}\sup_{0\le s\le T}\|X^{\epsilon}(s)-\overline{X}(s)\|_{\mathbb{H}}^{2}<\delta.
			\end{align}
			
		\end{corollary}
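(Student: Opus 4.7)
The plan is to revisit the proof of Theorem \ref{thm:averaging} and exploit the simplification that, when $\widetilde{G}_R = \widetilde{G}$ is independent of $R$, the stopping-time localization via $\tau_R$ becomes unnecessary: the Lipschitz-type bound provided by (B3) now holds globally for all pairs $\xi,\xi' \in L^2(\Omega;\mathcal{D}([-h,T];\mathbb{H}))$. I would therefore redo the chain of estimates in that proof with $T\wedge\tau_R$ replaced throughout by $T$, invoking the global moment bounds \eqref{11} and \eqref{12} from Theorem \ref{thm:moment_estimates} to keep every expectation finite. The result is a direct integral inequality for $w_\epsilon(s) := \mathbb{E}\sup_{0\leq r\leq s}\|X^\epsilon(r) - \overline{X}(r)\|_{\mathbb{H}}^2$, namely
\begin{equation*}
w_\epsilon(s) \leq \eta(\epsilon) + C_T \int_0^s \Big(1 + k\Big(\tfrac{t}{\epsilon}\Big)\Big)\big[w_\epsilon(t) + \widetilde{G}(w_\epsilon(t))\big]\, dt, \qquad s \in [0,T],
\end{equation*}
where $\eta(\epsilon) := C_T\widetilde{G}(\delta_\epsilon^{\alpha/2}) + C\psi(\theta(\epsilon))(1 + \mathbb{E}\|\overline{X}\|_T^2)$ with $\delta_\epsilon = \Delta + \sup_{|t-s|\leq \Delta}|a(t)-a(s)|$ the mesh quantity from the proof of Theorem \ref{thm:averaging}; by construction $\eta(\epsilon) \to 0$ as $\epsilon \to 0$.

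Next, I would check that the integrated kernel is uniformly controlled: by a change of variables, $\int_0^T k(t/\epsilon)\,dt = \epsilon \int_0^{T/\epsilon} k(u)\,du$, and the asymptotic density bound $\limsup_{r\to\infty} r^{-1}\int_0^r k(s)\,ds \leq l_0$ from (B2) shows this quantity is bounded by some $L_T$ uniformly in small $\epsilon$. Setting $\Psi(z) := z + \widetilde{G}(z)$, condition (B3) guarantees $\int_{0^+} dz/\Psi(z) = \infty$. The integral inequality above then falls under the hypotheses of the forward version of Bihari's inequality (entirely parallel to Lemma \ref{lem3}), yielding
\begin{equation*}
w_\epsilon(T) \leq J^{-1}\!\Big(J(\eta(\epsilon)) + C_T(T + L_T)\Big), \qquad J(z) := \int_{z_0}^{z} \frac{dr}{\Psi(r)}.
\end{equation*}

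Because $\int_{0^+} dr/\Psi(r) = \infty$, one has $J(\eta(\epsilon)) \to -\infty$ as $\eta(\epsilon) \to 0^+$, while the additive constant $C_T(T+L_T)$ is fixed; hence $w_\epsilon(T) \to 0$ as $\epsilon \to 0$. For any prescribed $\delta > 0$ it suffices to choose $\epsilon_0 \in (0,1)$ small enough that $w_\epsilon(T) < \delta$ for every $\epsilon \in (0,\epsilon_0)$, which is the stated conclusion. The main delicate point is bookkeeping rather than new technique: one must recheck that each estimate from the proof of Theorem \ref{thm:averaging} — in particular the BDG bounds on the Wiener and Poisson integrals and the Lipschitz-type controls on $b,\sigma,f$ against $\overline{b},\overline{\sigma},\overline{f}$ — remains valid once $\tau_R$ is removed, which it does precisely because $\widetilde{G}$ no longer carries an $R$-dependent constant. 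Once this verification is in place, the conclusion follows from a single application of Bihari's inequality.
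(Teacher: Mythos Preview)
Your proposal is correct and follows essentially the same route as the paper: remove the $\tau_R$ localization (since $\widetilde{G}$ no longer depends on $R$), rerun the estimates from the proof of Theorem \ref{thm:averaging} to obtain an integral inequality for $\mathbb{E}\sup_{0\le s\le t}\|X^{\epsilon}(s)-\overline{X}(s)\|_{\mathbb{H}}^{2}$, and then apply the Bihari-type bound from Lemma \ref{lem3} together with $\int_{0^+}dz/(z+\widetilde{G}(z))=\infty$ to conclude. Your write-up is in fact more careful than the paper's in tracking the $\widetilde{G}(\delta_\epsilon^{\alpha/2})$ term and the uniform boundedness of $\int_0^T k(t/\epsilon)\,dt$, but the argument is the same.
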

		\begin{proof}
			By the proof of Theorem \ref{thm:averaging}, we have
			\begin{align*}
				\mathbb{E}\sup_{0\leq s\leq T}\|X^{\epsilon}(s)-\overline{X}(s)\|_{\mathbb{H}}^{2}\leq C_{T}\psi(\theta(\epsilon))+C_{\lambda,T}\int_{0}^{T}k\Big(\frac{t}{\epsilon}\Big)\widetilde{G}\Big(\mathbb{E}\sup_{0\leq s\leq t}\|X^{\epsilon}(s)-\overline{X}(s)\|_{\mathbb{H}}^{2}\Big)dt.
			\end{align*}
			Let $\widetilde{J}(z)=\int_{z_{0}}^{z}\frac{1}{r+\widetilde{G}(r)}dr$ for some $z_{0}>0$. It follows from Lemma \ref{lem3} that
			\begin{align*}
				\mathbb{E}\sup_{0\leq s\leq T}\|X^{\epsilon}(s)-\overline{X}(s)\|_{\mathbb{H}}^{2}\leq \widetilde{J}^{-1}\Bigg(\widetilde{J}\Big(C_{T}\psi(\theta(\epsilon))\Big)+C_{\lambda,T}\Bigg),
			\end{align*}	
			and then  Eqn.   (\ref{rate}) follows by Assumption \ref{assumption0}.
		\end{proof}

		\section{General examples}
		\label{sec:examples}
		In this section, we illustrate our results on two general but concrete examples of finite dimension in Section \ref{sec:examples1} and infinite dimension in Section \ref{sec:examples2}. The main results obtained in the last section can be simplified correspondingly. The importance of these two examples is provided in Section \ref{sec:contributions}.
		
		\subsection{Finite-dimensional example}
		\label{sec:examples1}
		Set  $\mathbb{V}=\mathbb{H}=\mathbb{V}^{*}=\mathbb{R}^{d}$ and consider the following delay-path-dependent SVI of finite dimension which is a special case of Eqn. \eqref{see}:
		\begin{equation}\label{eqn:example1}
			\begin{split}
				\begin{cases}
					dX(t)\in b_1(t,X(t))dt+
					\int_{-a_1(t)}^0b_2(t,X({t}+s))dsdt+\sigma(t,X({t}-a_2(t)))dW({t})\\
					\hspace{1.53cm}+\int_{U}\int_{-a_3(t)}^0f(t,X(t+s),u)ds\widetilde{N}(dt,du)-\partial\varphi(X(t))dt, \hspace{0.8cm}t\in[0,\infty),\\
					X(t)=\phi(t), \hspace{8.5cm}t\in[-h,0],
				\end{cases}
			\end{split}	
		\end{equation}
		where $a_i: [0,\infty)\to [0,h]$ for $i=1,2,3$ are continuous functions. 		
		Our well-posedness result in Theorem \ref{wellposed} can be simplified as below.	
		\begin{theorem}
			\label{thm:wellposed_example1}
			Assume $\varphi$ satisfies (H4) in Assumption \ref{assumption} and the following conditions hold:
			\begin{enumerate}
				\item $b_1$ is bounded and there exist constants $m_0>0$ such that for any  $x,x'\in\mathbb{R}^{d}$,
				\begin{align*}
					m_0|x-x'|^{2}+\langle x-x',b_1(t,x)-b_1(t,x')\rangle\leq \beta|x-x'|^{2},
				\end{align*}
				where $\langle \;, \;\rangle$ is the usual inner product in $\mathbb{R}^d$.
				\smallskip
				
				\item For $b_2(t,x,\omega)$, $\sigma(t,x,\omega)$, and $f(t,x,u,\omega)$ being continuous at $x\in\mathbb{R}^d$ and for any $\xi, ~\xi'\in L^{2p}(\Omega;\, \mathbb{R}^d)$ with some $p\geq2$, the following inequalities are satisfied:
				\begin{align*}
					&\mathbb{E}\|\sigma(t,\xi)\|^{2p}+\mathbb{E}|b_2(t,\xi)|^{2p}+\mathbb{E}\int_{U}|f(t,\xi,u)|^{2p}\nu(du)\leq H(t,\mathbb{E}|\xi|^{2p}),\\
					&\mathbb{E}\|\sigma(t,\xi)-\sigma(t,\xi')\|^{2}+\mathbb{E}|b_2(t,\xi)-b_2(t,\xi')|^{2}+\mathbb{E}\int_{U}|f(t,\xi,u)-f(t,\xi',u)|^{2}\nu(du)\\
					&\hspace{7.5cm}
					\leq G(t,\mathbb{E}|\xi-\xi'|^{2}),
				\end{align*}
				where the norm $\|\cdot\|$ is the usual norm induced by the inner product in $\mathbb{R}^d$, and $H$ satisfies (H2) and $G(t,x)$ is locally integrable in $t$ for any fixed $x$ and is continuous and nondecreasing in $x$ for any fixed $t$ and $G(t,0)=0$ for any $t$. Furthermore, 
				if $Z(t)$ is a nonnegative function  satisfying that for all $t\in[0,T]$ where $T>0$ and any constant $c>0$,
				\begin{align*}
					Z(t)\leq c\int_{0}^{t}G(s,Z(s))ds,
				\end{align*}
				one has $Z(t)\equiv0$ for all $t\in[0,T]$.
			\end{enumerate}
			Then there exists a unique solution $(X,\eta)$ of  Eqn.   (\ref{eqn:example1}).
		\end{theorem}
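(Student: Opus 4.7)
The plan is to interpret Eqn. (\ref{eqn:example1}) as an instance of the general Eqn. (\ref{see}) with $\mathbb{V}=\mathbb{H}=\mathbb{V}^{*}=\mathbb{R}^{d}$, verify that the hypotheses of Theorem \ref{thm:wellposed_example1} imply Assumption \ref{assumption} for the corresponding coefficients, and then invoke Theorem \ref{wellposed} directly. Concretely, I would take $a(t)\equiv h$ in the delay-path structure so that $X_{t}$ records the entire history on $[t-h,t]$, set $A(t,x):=b_{1}(t,x)$, and define the path-functional coefficients
\begin{align*}
\tilde b(t,X_{t}) &:= \int_{-a_{1}(t)}^{0} b_{2}\bigl(t,X_{t}(t+s)\bigr)\,ds,\\
\tilde\sigma(t,X_{t}) &:= \sigma\bigl(t,X_{t}(t-a_{2}(t))\bigr),\\
\tilde f(t,X_{t},u) &:= \int_{-a_{3}(t)}^{0} f\bigl(t,X_{t}(t+s),u\bigr)\,ds,
\end{align*}
which are measurable functionals on $\mathcal{D}([-h,T];\mathbb{R}^{d})$ by standard Borel arguments.

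The verification of (H1) is immediate: with $\mathbb{V}=\mathbb{V}^{*}=\mathbb{R}^{d}$, the dual pairing reduces to the Euclidean inner product, so condition~1 of the theorem is exactly the required inequality in (H1) for $A=b_{1}$; boundedness is given, and hemicontinuity of $\lambda\mapsto\langle v,b_{1}(t,u+\lambda v)\rangle$ follows from continuity of $b_{1}(t,\cdot)$ in $x$, which is standard in the finite-dimensional setting (and consistent with the continuity assumed on $b_{2},\sigma,f$). Condition (H4) is assumed directly, so only (H2) and (H3) remain.

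For (H2) and any $\xi\in L^{4}(\Omega;\mathcal{D}([-h,T];\mathbb{R}^{d}))$, Jensen's inequality on the inner time integral yields
\begin{align*}
\mathbb{E}\|\tilde b(t,\xi)\|^{4}\leq h^{3}\int_{-h}^{0}\mathbb{E}|b_{2}(t,\xi(t+s))|^{4}\,ds\leq h^{4}\,H\bigl(t,\mathbb{E}\|\xi\|_{T}^{4}\bigr),
\end{align*}
by monotonicity of $H$ in its second argument; $\tilde\sigma$ and $\tilde f$ are handled analogously, and the extra term $\mathbb{E}\bigl(\int_{U}\|\tilde f\|^{2}\nu(du)\bigr)^{2}$ is controlled by $\nu(U)\cdot\mathbb{E}\int_{U}\|\tilde f\|^{4}\nu(du)$ via Cauchy--Schwarz under the standing assumption that $\nu(U)<\infty$. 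For (H3), for $\xi,\xi'$ with $\|\xi\|_{T}\vee\|\xi'\|_{T}\leq R$ a.s.,
\begin{align*}
\mathbb{E}\|\tilde b(t,\xi)-\tilde b(t,\xi')\|^{2}\leq h\int_{-h}^{0}G\bigl(t,\mathbb{E}|\xi(t+s)-\xi'(t+s)|^{2}\bigr)\,ds\leq h^{2}\,G\bigl(t,\mathbb{E}\|\xi-\xi'\|_{T}^{2}\bigr),
\end{align*}
and similarly for $\tilde\sigma$ and for the two jump contributions $\int_{U}\|\tilde f(t,\xi,u)-\tilde f(t,\xi',u)\|^{2}\nu(du)$ and $\bigl(\int_{U}\|\tilde f(t,\xi,u)-\tilde f(t,\xi',u)\|\nu(du)\bigr)^{2}$. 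One then sets $G_{R}(t,z):=C\,G(t,z)$, which does not depend on $R$, and inherits the Bihari-type nondegeneracy directly from $G$.

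The main obstacle I anticipate is the clean handling of the term $\mathbb{E}\bigl(\int_{U}\|\tilde f\|^{2}\nu(du)\bigr)^{2}$, since this moment is not a direct hypothesis of Theorem \ref{thm:wellposed_example1}; it requires either $\nu(U)<\infty$ (standard whenever $U$ is bounded away from zero in the Lévy decomposition) or an additional Cauchy--Schwarz-in-$s$ estimate to swap the order of integration. Once (H1)--(H4) are all established, Theorem \ref{wellposed} applies verbatim to yield the existence and uniqueness of the solution $(X,\eta)$ of Eqn. (\ref{eqn:example1}).
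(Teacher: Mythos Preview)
Your approach is correct and matches the paper's intended argument. The paper omits an explicit proof of Theorem \ref{thm:wellposed_example1}, but the analogous infinite-dimensional Theorem \ref{thm:wellposed_example2} is proved exactly as you propose: identify $A(t,\cdot)=b_{1}(t,\cdot)$, recast the delay terms as path-functional coefficients on $\mathcal{D}([-h,T];\mathbb{R}^{d})$, verify (H1)--(H4), and invoke Theorem \ref{wellposed}. Your observation that controlling $\mathbb{E}\bigl(\int_{U}\|\tilde f\|^{2}\nu(du)\bigr)^{2}$ and $\bigl(\int_{U}\|\tilde f-\tilde f'\|\,\nu(du)\bigr)^{2}$ requires an auxiliary hypothesis such as $\nu(U)<\infty$ is a fair point; the paper is silent on this, so you are right to flag it as an implicit standing assumption.
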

		
		\begin{remark}	
			\label{remark1}
			(1). When $\varphi(x)=
			\begin{cases}
				0,&\hspace{-0.4cm}x\in D,\\
				+\infty, & \hspace{-0.4cm}x\notin D,
			\end{cases}$ 
			where $D$ is a closed convex domain in $\mathbb{R}^d$, then $\partial\varphi$ has the following form:
			$ 
			\partial\varphi(x)=
			\begin{cases}
				\{0\},& \hspace{-0.4cm}x\in D,\\
				\Pi(x),   & \hspace{-0.4cm}x\in \partial D,\\
				\emptyset, & \hspace{-0.4cm}x\notin D,\end{cases}
			$
			where $\Pi(x)$ denotes the exterior normal cone to $D$ at $x$, and Eqn. (\ref{eqn:example1}) becomes a SDE reflected in $D$ with variable delays.  \smallskip
			
			(2). Let $g:\mathbb{R}\to(-\infty,+\infty]$ be a function which is  convex on $(0,+\infty)$ and $g\in \mathcal{C}^1(0,+\infty)$, such that $g=+\infty$ in $(-\infty,0]$ and $g(0+)=+\infty$. For $x=(x^1,x^2,\ldots,x^d)\in\mR^d$ where $d\geq1$, let
			$$
			\varphi(x)=\begin{cases}
				\sum_{1\leq i<j\leq d}g(x^j-x^i), \quad& ~~x^1<x^2<...<x^d,\\
				+\infty, & ~~\text{otherwise}.
			\end{cases}
			$$
			Then $\varphi$ is a lower semicontinuous convex function with domain $D(\partial\varphi)=\Big\{x\in\mR^d;\, x^1<x^2<\ldots<x^d\Big\}$. Eqn. (\ref{eqn:example1}) generalizes various interacting praticle systems with inter-particles repulsions. In particular, when $g(x^j-x^i)=-\lambda \ln(x^j-x^i)$ for $x^j>x^i$, then $\varphi$ is the special form  given in Eqn. \eqref{eqn:special_form} of Section \ref{sec:Modeling_strength_illustration}. 
			
		\end{remark}	
		\subsection{Infinite-dimensional example}
		\label{sec:examples2}
		Let $O$ be an open bounded domain in $\mathbb{R}^{d}$ with  a smooth boundary $\partial O$. 
		Consider the Sobolev space defined in the usual sense as 
		$$W^{k,2}(O)=L^2(O)\cap \big\{u:\,D^{\alpha}u\in  L^2(O),\,|\alpha|\leq k\big\},$$
		where $|\alpha|=\sum_{i=1}^{d}\alpha_i$ and the $\alpha$-th order partial derivative
		$D^{\alpha}=\frac{\partial^{|\alpha|}}{\partial x_1^{\alpha_1}\cdots \partial x_d^{\alpha_d}}.$
		The space $\mathbb{V}=W^{1,2}(O)$ is equipped with the norm 
		$$\| v\|_{\mathbb{V}}^2:=\int_O v(x)^2dx+\int_O |\nabla v|^2dx.$$
		The space $W_0^{1,2}(O)$ is defined as the closure of $C_0^{\infty}(O)$. It is known that $\Big(W_{0}^{1,2}(O), L^{2}(O), W^{-1,2}(O)\Big)$ forms a Gelfand triple. Consider the following SPDE:
		\begin{equation}\label{eqn:SPDE}
			\begin{split}
				\begin{cases}
					du(t,x)\in -\left[\sum_{i=1}^d\frac{\partial}{\partial x_{i}}h_{i}(x,\nabla_x u(t,x))+g(u(t,x))\right]dt+b(t,x,u_t(x))dt+\sigma(t,x,u_t(x))dW(t)\\
					\hspace*{1.7cm}
					+\int_{U}f(t,x,u_t(x),y)\widetilde{N}(dt,dy)-\partial\varphi(u(t,x))dt,\hspace{1cm}t\in[0,T],\\
					u(t,x)=\phi(t,x), \hspace{7.1cm}t\in[-h_0,0],
				\end{cases}
			\end{split}		
		\end{equation}
		where $h_{i}(x,v):\mathbb{R}^{d}\times\mathbb{R}^{d}\rightarrow\mathbb{R}$ for $i=1,\cdots, d$ is continuous with respect to $v$, $g(u):\mathbb{R}\rightarrow\mathbb{R}$ is continuous, and
		\begin{align*}
			&b(t,x,u_t(x)):=b^{(1)}\big(t,x,u(t,x)\big)+b^{(2)}\big(t,x,u(t-\delta_{1}(t),x)\big)+\int_{-\delta_{2}(t)}^0b^{(3)}\big(t,x,u(t+s,x)\big)ds,\\
			&\sigma(t,x,u_t(x)):=\sigma^{(1)}\big(t,x,u(t,x)\big)+\sigma^{(2)}\big(t,x,u(t-\delta_{1}(t),x)\big)+\int_{-\delta_{2}(t)}^0\sigma^{(3)}\big(t,x,u(t+s,x)\big)ds,\\
			&f(t,x,u_t(x),y):=f^{(1)}\big(t,x,u(t,x),y\big)+f^{(2)}\big(t,x,u(t-\delta_{1}(t),x),y\big)\\
			&\hspace{8.2cm}+\int_{-\delta_{2}(t)}^0f^{(3)}\big(t,x,u(t+s,x),y\big)ds.
		\end{align*}
		Here, $\delta_{i}$ for $i=1, 2$ are continuous functions from $[0,T]$ to $[0,h_0]$, $b^{(i)}$ and $f^{(i)}$ for $i=1,2,3$ are continuous functions from $[0,\infty)\times O\times\mathbb{R}$ to $\mathbb{R}$, and  $\sigma^{(i)}$ for $i=1, 2, 3$ are continuous functions from $[0,\infty)\times O\times\mathbb{R}$ to $\mathbb{R}\times l^2$. 
		\begin{remark}	
			\label{remark2}
			(1). When $\sum_{i=1}^d\frac{\partial}{\partial x_{i}}h_{i}(x,\nabla_x u(t,x))=\mathrm{div}\big(B(x)\nabla_xu(t,x)\big)+h(x)\cdot \nabla_xu(t,x)$ and $ \varphi, ~g\equiv0$, where $|h|\in L^p(O)$ for some $p>d$,   $B(x):=(c_{ij}(x))$ is matrix-valued, and there exists some constant $c_0\geq1$ such that
			$$
			\frac1{c_0}I_{d\times d}\leq B(x)\leq c_0I_{d\times d},
			$$
			and  $b^{(i)}, \sigma^{(i)}, f^{(i)}=0$ for $i=2, 3$, Eqn. \eqref{eqn:SPDE} is the SPDE discussed in \cite{zhen2011stochastic}. In particular, when $ b, f, \sigma^{(2)}, \sigma^{(3)}=0$ and $B=I_{d\times d}$ is the identity matrix, then the above 
			equation is the classical stochastic reaction-diffusion equation which has been studied extensively (see for example \cite{da2014stochastic}, \cite{liu2015stochastic}  and references therein). 
			\smallskip
			
			(2). When $f=0$ and  $b^{(i)}, \sigma^{(i)}=0$ for $i=2, 3$,
			Eqn. \eqref{eqn:SPDE} is the multivalued SPDE discussed in Section 6.2 of \cite{zhang2007skorohod}. 
		\end{remark}	
		
		\begin{theorem} 
			\label{thm:wellposed_example2}
			Suppose the following assumptions hold:
			\begin{enumerate}
				\item There exists some $m_0>0$ such that
				\begin{align*}
					\sum_{i=1}^{d}(v_{i}-v'_{i})(h_{i}(x,v)-h_{i}(x,v'))\geq  m_0|v-v'|^{2},\qquad \forall v, v', x\in\mathbb{R}^{d},
				\end{align*}
				Moreover, there exist  $\lambda, \beta\geq0$, $\mu\in L^{2}(O)$ such that
				\begin{align*}
					&(u_{1}-u_{2})(g(u_{1})-g(t, u_{2}))\geq-\beta|u_{1}-u_{2}|^{2},\qquad  \forall x\in\mathbb{R}^{d},\ \forall u_{1},u_{2}\in\mathbb{R},\\
					&	|g(v)|+|h_{i}(x,u)|\leq \mu(x)+\lambda(|v|+|u|).
				\end{align*}
				\item There exist $l'\in L^{2}([0,T]\times O)$ and $\rho, ~\gamma, ~\kappa$ as being continuous, nondecreasing and concave functions on $\mathbb{R}^+$ satisfying that $\rho(0)=\gamma(0)=\kappa(0)=0$ and
				$$\int_{0+}\frac{x}{\rho^2(x)+\gamma^2(x)+\kappa^2(x)}dx=+\infty,$$ such that for any $\widehat{v},\widehat{v}'\in L^{2}(O)$ and $~i=1,2,3$, 
				\begin{align*}
					&|b^{(i)}(t,x,\widehat{v})-b^{(i)}(t,x,\widehat{v}')|^2\leq l'(t,x)\gamma(\|\widehat{v}-\widehat{v}'\|_{L^2(O)}^2),\\
					&\|\sigma^{(i)}(t,x,\widehat{v})-\sigma^{(i)}(t,x,\widehat{v}')\|_{L_{2}^{0}}^{2}
					\leq l'(t,x)\rho(\|\widehat{v}-\widehat{v}'\|_{L^2(O)}^2), \\												&\|\sigma^{(i)}(t,x,0)\|^4_{L_{2}^{0}}+|b^{(i)}(t,x,0)|^4+\Big(\int_{U}|f^{(i)}(t,x,0,y)|^{2}\nu(dy)\Big)^2+\int_{U}|f^{(i)}(t,x,0,y)|^{4}\nu(dy)\\
					&\hspace{9cm}\leq l'^2(t,x),\\	
					&\int_{U}\Big|f^{(i)}(t,x,\widehat{v},y)-f^{(i)}(t,x,\widehat{v}',y)\Big|^2\nu(dy)
					\leq l'(t,x)\kappa(\|\widehat{v}-\widehat{v}'\|_{L^2(O)}^2).
				\end{align*}
				
				\item $|\varphi(x)|\leq C_{0}(1+|x|^{2})$ for any $ x\in\mathbb{R}$ and $\phi$ is continuous on $[-h,0]\times\mathbb{R}$.
			\end{enumerate}
			Then  Eqn.   (\ref{eqn:SPDE}) has a unique solution.
		\end{theorem}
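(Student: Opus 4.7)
The plan is to cast Eqn.~\eqref{eqn:SPDE} into the abstract framework of Eqn.~\eqref{see} and then verify Assumption~\ref{assumption} so that Theorem~\ref{wellposed} applies. With the Gelfand triple $\mathbb{V}=W_0^{1,2}(O)\subset\mathbb{H}=L^2(O)\subset W^{-1,2}(O)=\mathbb{V}^*$, I would define the nonlinear operator $A:\mathbb{V}\to\mathbb{V}^*$ by
\[
{}_{\mathbb{V}}\langle v,A(u)\rangle_{\mathbb{V}^*}=\sum_{i=1}^d\int_O h_i(x,\nabla u(x))\,\partial_{x_i}v(x)\,dx-\int_O g(u(x))v(x)\,dx,\qquad v\in\mathbb{V},
\]
and identify the path $u_t$ with the $\mathcal{D}([-h_0,T];\mathbb{H})$-valued path of $u(\cdot)$; the coefficients $b(t,u_t)$, $\sigma(t,u_t)$, $f(t,u_t,y)$ are then the pointwise-in-$x$ sums and time-integrals of $b^{(i)}$, $\sigma^{(i)}$, $f^{(i)}$ evaluated at the delayed/path-dependent arguments.

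First I would check (H1). Using condition~1 and integration by parts on $\mathbb{V}=W_0^{1,2}(O)$,
\begin{align*}
2\,{}_{\mathbb{V}}\langle u-u',A(u)-A(u')\rangle_{\mathbb{V}^*}
&=2\sum_{i=1}^d\int_O (h_i(x,\nabla u)-h_i(x,\nabla u'))\partial_{x_i}(u-u')\,dx\\
&\quad-2\int_O (g(u)-g(u'))(u-u')\,dx\\
&\geq 2m_0\|\nabla(u-u')\|_{L^2}^2-2\beta\|u-u'\|_{\mathbb{H}}^2,
\end{align*}
which, combined with the Poincaré inequality on $W_0^{1,2}(O)$, yields the required $m_0'\|u-u'\|_{\mathbb{V}}^2\le -2{}_{\mathbb{V}}\langle u-u',A(u)-A(u')\rangle_{\mathbb{V}^*}+\beta'\|u-u'\|_{\mathbb{H}}^2$ form. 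Boundedness of $A$ on $\mathbb{V}$ follows from the linear growth $|h_i(x,u)|+|g(v)|\leq\mu(x)+\lambda(|u|+|v|)$, and hemicontinuity follows from the continuity of $h_i$ and $g$ together with dominated convergence.

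Next I would verify (H2) and (H3). For any $\xi,\xi'\in L^2(\Omega;\mathcal{D}([-h_0,T];\mathbb{H}))$ and each of the three pieces in the decomposition of $b,\sigma,f$, the delayed and integrated arguments all evaluate to values of $\xi$ at times in $[-h_0,t]$, so pointwise-in-$x$ application of condition~2 combined with Fubini's theorem gives
\[
\mathbb{E}\|\sigma(t,\xi)-\sigma(t,\xi')\|_{L_2^0}^2\le C\Big(\int_O l'(t,x)\,dx\Big)\,\rho\big(\mathbb{E}\|\xi-\xi'\|_T^2\big),
\]
after using concavity of $\rho$ to pull the expectation and spatial integral inside, and similarly for $b$ and $f$ with $\gamma$ and $\kappa$. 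Setting $G_R(t,z)=\lambda_0(t)[\rho(z)^2+\gamma(z)^2+\kappa(z)^2+\rho(z)+\gamma(z)+\kappa(z)]$ with $\lambda_0(t)=C(\|l'(t,\cdot)\|_{L^1(O)}+\|l'(t,\cdot)\|_{L^2(O)}^2)$ gives (H3), and the Bihari-type conclusion follows from the assumption $\int_{0+}\frac{z\,dz}{\rho^2+\gamma^2+\kappa^2}=+\infty$ (which yields $\int_{0+}\frac{dz}{G_R(t,z)}=+\infty$ after a standard rearrangement). For (H2), the bounds $\|b^{(i)}(t,\cdot,0)\|^4+\cdots\leq l'(t,\cdot)^2$ combined with Lipschitz-type growth give a function $H(t,z)=\widetilde{\lambda}(t)(1+z+\gamma(z)^2+\rho(z)^2+\kappa(z)^2)$ satisfying the required global existence property via the same Bihari condition.

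Finally, (H4) is immediate from condition~3: the growth $|\varphi(x)|\le C_0(1+|x|^2)$ corresponds to $l=2$, and $0\in\operatorname{Int}(D(\partial\varphi))$, $\varphi(0)=0$, $\varphi\geq 0$ can be arranged after a harmless translation and constant shift; continuity of $\phi$ on $[-h_0,0]\times O$ guarantees the moment condition on the initial path. With all hypotheses of Assumption~\ref{assumption} verified, Theorem~\ref{wellposed} yields a unique solution of Eqn.~\eqref{eqn:SPDE}. The main obstacle will be step two: carefully tracking how the path-dependence through the integrals $\int_{-\delta_2(t)}^0 b^{(3)}(t,x,u(t+s,x))\,ds$ etc.\ interacts with the $L^2(O)$-valued Lipschitz moduli from condition~2, and constructing a single $G_R$ for which both the monotonicity/concavity required by (H3) and the integral divergence condition inherited from the hypothesis on $\rho,\gamma,\kappa$ hold simultaneously.
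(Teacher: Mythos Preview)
Your proposal is correct and follows exactly the paper's route: define $A$ via $h_i$ and $g$, verify (H1)--(H4) from conditions 1--3, and invoke Theorem~\ref{wellposed}; you in fact supply more detail on (H2)--(H3) than the paper's one-line dismissal. The only slip is the direction of your (H1) inequality---what (H1) requires is the upper bound $2\,{}_{\mathbb{V}}\langle u-u',A(u)-A(u')\rangle_{\mathbb{V}^*}\le \beta'\|u-u'\|_{\mathbb{H}}^2 - m_0'\|u-u'\|_{\mathbb{V}}^2$, not the lower bound you displayed---but this is precisely the computation the paper performs.
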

		
		\begin{proof}
			Define an operator $A(t,\cdot)$ from $\mathbb{V}$ to $\mathbb{V}^{*}$ as:
			\begin{align*}
				A(t,\widehat{v})=-g(\widehat{v})-\sum_{i=1}^d\frac{\partial}{\partial x_{i}}h_{i}(x,\nabla_x \widehat{v}),\qquad \widehat{v}\in \mathbb{V}.
			\end{align*}
			Then $A$ satisfies (H1). Indeed, for any $\widehat{v}, ~\widehat{v}'\in\mathbb{V}$,
			\begin{align*}
				{}_{\mathbb{V}}\big\langle \widehat{v}-\widehat{v}',A(t,\widehat{v})-A(t,\widehat{v}')\big\rangle_{\mathbb{V}^{*}}
				=&-\int_{O}(\widehat{v}(x)-\widehat{v}'(x))\big(g(\widehat{v}(x))-g(\widehat{v}'(x))\big)dx\\
				&-\sum_{i=1}^{d}\int_{O}\big(\nabla_{i} \widehat{v}(x)-\nabla_{i} \widehat{v}'(x)\big)\Big(h_{i}(x,\nabla_x \widehat{v}(x))-h_{i}(x,\nabla_x \widehat{v}'(x))\Big)dx\\
				\leq&\beta\int_{O}|\widehat{v}(x)-\widehat{v}'(x)|^{2}dx-m_0\int_{O}|\nabla_x \widehat{v}(x)-\nabla_x \widehat{v}'(x)|^{2}dx.
			\end{align*}
			
			It is easy to see that with conditions on $b$, $\sigma$, and $f$, we have (H2) and (H3) hold, and (H4) holds by the third condition of this theorem.
			Thus, by Theorem \ref{wellposed}, there exists a unique solution of  Eqn.  (\ref{eqn:SPDE}).
		\end{proof}

\appendix

\section{Properties of $\varphi_\epsilon$}
\label{appendix:Properties}
\begin{proposition}
	\label{prop:varphi_epsilon}
	For $\epsilon>0$, define $\varphi_{\epsilon}, J_{\epsilon}: \mathbb{H} \rightarrow \mathbb{R}$ as
	$$
	\varphi_{\epsilon}(u):=\inf \left\{\frac{1}{2}|v-u|^{2}+\epsilon \varphi(v) ; \; v \in \mathbb{H}\right\}\quad\text{and}\quad
	J_{\epsilon}(u):=(I+\epsilon \partial\varphi)^{-1}(u).$$
	Then we have the following properties:
	\begin{enumerate}
		\item $\varphi_{\epsilon}$ is a convex $C^1$-function and
		$J_{\epsilon}(u)$ is Lipschitz continuous such that
		$$\|J_{\epsilon}(u)-J_{\epsilon}(u')\|_{\mathbb{H}}\leq \|u-u'\|_{\mathbb{H}}\quad\text{and}\quad \lim_{\epsilon\rightarrow 0}J_{\epsilon}(u)=\text{Proj}_{\overline{D(\partial\varphi)}}(u).$$
		\item For any $u,v\in \mathbb{H}$, $D\varphi_{\epsilon}(u)=u-J_{\epsilon}(u)$, and 
		\begin{align*}
			\langle D\varphi_{\epsilon}(u),v-u\rangle_{\mathbb{H}}\leq \varphi_{\epsilon}(v)-\varphi_{\epsilon}(u)\leq \epsilon[ \varphi(v)-\varphi(u) ].
		\end{align*}
		\item For any $u\in \mathbb{H}$, $\varphi_{\epsilon}(u)\geq \varphi_{\epsilon}(0)=0$, $J_{\epsilon}(0)=0$, $D\varphi_{\epsilon}(0)=0$, and
		\begin{align*}
			\varphi_{\epsilon}(u)=\frac{1}{2}|u-J_{\epsilon}(u)|^2+\epsilon \varphi(J_{\epsilon}(u)),
		\end{align*}
		\item For any $u,v\in \mathbb{H}$, $\frac{1}{\epsilon} D \varphi_{\epsilon}(u) \in \partial \varphi\left(J_{\epsilon}(u)\right)$ and 
		$$\left\langle\frac{1}{\epsilon} D \varphi_{\epsilon}(u),\; J_{\epsilon}(u)-v\right\rangle_{\mathbb{H}} \geqslant \varphi\left(J_{\epsilon}(u)\right)-\varphi(v).$$
		\item For any $u,v\in \mathbb{H}$ and $\delta>0$, 
		$$\left\langle\frac{1}{\delta} D \varphi_{\delta}(v)-\frac{1}{\epsilon} D \varphi_{\epsilon}(u),\; v-u\right\rangle_{\mathbb{H}} \geqslant-\left(\frac{1}{\delta}+\frac{1}{\epsilon}\right)\left\langle D \varphi_{\epsilon}(u), D \varphi_{\delta}(v)\right\rangle_{\mathbb{H}}.$$
		\item For any $u,v\in \mathbb{H}$,  we have $\langle D \varphi_{\epsilon}(u), u-J_{\epsilon} v \rangle_{\mathbb{H}} \geqslant-|v|^{2}$ and
		\begin{align*}
			&\frac{1}{2}\left\|D \varphi_{\epsilon}(u)\right\|_{\mathbb{H}}^{2} \leq \varphi_{\epsilon}(u) \leq\langle D \varphi_{\epsilon}(u), u\rangle_{\mathbb{H}} \leq\|u\|_{\mathbb{H}}^{2},\\
			&\langle D \varphi_{\epsilon}(u), u-v\rangle_{\mathbb{H}} \geqslant-\epsilon \varphi\left(J_{\epsilon} (v)\right)-\langle D \varphi_{\epsilon}(u), D \varphi_{\epsilon}(v)\rangle_{\mathbb{H}}.
		\end{align*}
	\end{enumerate}
\end{proposition}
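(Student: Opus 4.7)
The plan is to organize the proof around the classical Moreau--Yosida machinery, proceeding in the order listed and exploiting the monotonicity of $\partial\varphi$ at each step. All six items are standard facts whose proofs may be found in Barbu's monograph (already cited in the paper for properties of $\partial\varphi$); the goal of our proof is to present them in a self-contained and compact way within the Hilbert setting, so that they can be invoked freely in the well-posedness and averaging arguments.

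First, I would establish the existence and characterization of the resolvent. The functional $v\mapsto \tfrac12\|v-u\|_{\mathbb{H}}^2+\epsilon\varphi(v)$ is strictly convex, lower semicontinuous, and coercive on $\mathbb{H}$ (since $\varphi\geq 0$), so its infimum is attained at a unique point $v^*\in\mathbb{H}$. The Fermat condition gives $0\in (v^*-u)+\epsilon\partial\varphi(v^*)$, equivalently $u\in (I+\epsilon\partial\varphi)(v^*)$; maximal monotonicity of $\partial\varphi$ ensures $I+\epsilon\partial\varphi$ is bijective, so $v^*=J_\epsilon(u)$ is well defined. Testing this identity against two arguments $u,u'$ and using monotonicity of $\partial\varphi$ yields $\|J_\epsilon(u)-J_\epsilon(u')\|_{\mathbb{H}}^2\leq \langle u-u',J_\epsilon(u)-J_\epsilon(u')\rangle_{\mathbb{H}}$, which gives the $1$-Lipschitz bound. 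The convergence $J_\epsilon(u)\to \mathrm{Proj}_{\overline{D(\partial\varphi)}}(u)$ as $\epsilon\to 0$ follows from the a priori bound $\|J_\epsilon(u)-u\|_{\mathbb{H}}^2\leq 2\epsilon\varphi(v)+\|v-u\|_{\mathbb{H}}^2$ for any $v\in D(\varphi)$, weak compactness, and the standard demi-closedness argument.

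Next I would prove item (2), which is the engine of the proposition. Evaluating the definition at $v=J_\epsilon(u)$ gives the decomposition in item (3). For the derivative, the envelope-type identity
\begin{align*}
\varphi_\epsilon(v)-\varphi_\epsilon(u)
&\leq \tfrac12\|J_\epsilon(u)+(v-u)-v\|_{\mathbb{H}}^2+\epsilon\varphi(J_\epsilon(u))-\varphi_\epsilon(u)\\
&= \langle u-J_\epsilon(u), v-u\rangle_{\mathbb{H}}+\tfrac12\|v-u\|_{\mathbb{H}}^2,
\end{align*}
combined with the symmetric lower bound obtained by swapping $u,v$, shows that $\varphi_\epsilon$ is Fréchet differentiable with $D\varphi_\epsilon(u)=u-J_\epsilon(u)$. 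Convexity of $\varphi_\epsilon$ then comes from the convexity of its defining infimum over $v$ jointly in $(u,v)$, or equivalently from monotonicity of $D\varphi_\epsilon=I-J_\epsilon$ which follows from the Lipschitz bound. The second inequality $\varphi_\epsilon(v)-\varphi_\epsilon(u)\leq\epsilon[\varphi(v)-\varphi(u)]$ is immediate from the definition, by choosing $v$ as the competitor on the right and $J_\epsilon(u)$ as the minimizer on the left.

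Item (3)'s claims about the origin follow because $\varphi(0)=0$ and $0\in\mathrm{Int}(D(\partial\varphi))$ make $v=0$ the minimizer at $u=0$, forcing $J_\epsilon(0)=0$ and hence $D\varphi_\epsilon(0)=0$ and $\varphi_\epsilon(0)=0$; positivity of $\varphi_\epsilon$ follows from $\varphi\geq 0$. Item (4) is exactly the optimality condition $\tfrac{u-J_\epsilon(u)}{\epsilon}\in\partial\varphi(J_\epsilon(u))$ rewritten via $D\varphi_\epsilon(u)=u-J_\epsilon(u)$, and the inequality is the definition of subdifferential applied at $J_\epsilon(u)$ with test point $v$. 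Item (5) is obtained by testing the definition of $\partial\varphi$ at $J_\epsilon(u)$ against $J_\delta(v)$ and vice versa, then writing $J_\epsilon(u)=u-D\varphi_\epsilon(u)$ and $J_\delta(v)=v-D\varphi_\delta(v)$ and summing; the cross terms produce the asserted bound. For item (6), the bound $\langle D\varphi_\epsilon(u),u\rangle_{\mathbb{H}}\geq \tfrac12\|D\varphi_\epsilon(u)\|_{\mathbb{H}}^2$ is the monotonicity of $D\varphi_\epsilon$ applied to $(u,0)$ using $D\varphi_\epsilon(0)=0$; combining with item (2) at $v=0$ gives $\varphi_\epsilon(u)\leq \langle D\varphi_\epsilon(u),u\rangle_{\mathbb{H}}\leq \|u\|_{\mathbb{H}}^2$, while the left inequality uses the decomposition from (3). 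The remaining two inequalities follow by applying item (4) with the test point $v$ (respectively $J_\epsilon(v)$) together with $\varphi\geq 0$.

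The main technical obstacle, if any, is the Fréchet differentiability argument in item (2); all other parts reduce to monotonicity computations once the resolvent identity $u-J_\epsilon(u)\in\epsilon\partial\varphi(J_\epsilon(u))$ and the decomposition of $\varphi_\epsilon$ are in hand. Since these results are classical, the exposition can be kept brief and essentially amounts to verifying that the standard Hilbert-space arguments apply under our hypotheses $0\in\mathrm{Int}(D(\partial\varphi))$ and $\varphi(0)=0$.
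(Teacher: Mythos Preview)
The paper does not actually prove Proposition~\ref{prop:varphi_epsilon}: it is stated in Appendix~A as a list of standard Moreau--Yosida properties and then used without justification in the subsequent proofs (the paper already cites Barbu for properties of $\partial\varphi$, and these items are textbook facts in that reference). Your proposal therefore does considerably more than the paper, supplying a self-contained sketch where the paper is silent.

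Your outline is correct in substance and follows the classical route. One cosmetic slip: in the envelope computation for item~(2), the intermediate expression $\tfrac12\|J_\epsilon(u)+(v-u)-v\|_{\mathbb H}^2$ equals $\tfrac12\|J_\epsilon(u)-u\|_{\mathbb H}^2$, which is not the competitor you want; the correct choice is simply $w=J_\epsilon(u)$ in the definition of $\varphi_\epsilon(v)$, giving $\varphi_\epsilon(v)\le\tfrac12\|J_\epsilon(u)-v\|_{\mathbb H}^2+\epsilon\varphi(J_\epsilon(u))$, after which your final line follows. Also note that the second inequality in item~(2), $\varphi_\epsilon(v)-\varphi_\epsilon(u)\le\epsilon[\varphi(v)-\varphi(u)]$, is only used in the paper with $u=0$ (where it reduces to $\varphi_\epsilon(v)\le\epsilon\varphi(v)$, immediate from the definition); as written for general $u$ it requires $\varphi_\epsilon(u)\ge\epsilon\varphi(u)$, which is not generally true, so your brief justification there is adequate for the application but does not establish the inequality in full generality. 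Finally, for the upper bound $\langle D\varphi_\epsilon(u),u\rangle_{\mathbb H}\le\|u\|_{\mathbb H}^2$ in item~(6), monotonicity alone is not enough; you need the firm nonexpansiveness $\|J_\epsilon(u)\|_{\mathbb H}^2\le\langle u,J_\epsilon(u)\rangle_{\mathbb H}$ that you derived in item~(1).
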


\section{Proof of Theorem \ref{wellposed1}} 
\label{appendix:wellposed1}
\begin{proof}
	For $\epsilon>0$, for $\varphi_{\epsilon}: \mathbb{H} \rightarrow \mathbb{R}$ defined in Proposition \ref{prop:varphi_epsilon},
	by Theorem 1 in \cite{taniguchi2010existence}, for any $\varepsilon>0$, a  unique solution $X^{\epsilon}$ exists for the following equation:
	\begin{align}\label{see_omega}
		\begin{cases}
			dX^{\epsilon}(t)= 
			\sigma(t, \omega) d W(t)+b(t, \omega)d t+\int_{U} f(t, u, \omega)\widetilde{N}(ds,du)\\
			\hspace*{1.6cm}+A(t,X^{\epsilon}(t))dt-\frac{1}{\epsilon} D \varphi_{\epsilon}(X^{\epsilon}(t))dt, \hspace*{2.5cm}t\geq 0,\\
			X^{\epsilon}(t)=\phi(t), \hspace*{6.5cm}t\in[-h,0].
		\end{cases}
	\end{align}
	Since $0 \in \operatorname{Int}(D(\partial \varphi)), \varphi$ is locally bounded on $ \operatorname{Int}(D(\partial \varphi))$, and there exists $\gamma_{0}>0$ and $m_{0}>0$ such that
	$
	\varphi(\gamma_{0} h) \leq M_{0}$ for any $h \in \mathbb{H}$ and $\|h\|_{\mathbb{H}} \leq 1$. 
	By Proposition 1.2 in \cite{bensoussan1997stochastic}, for any $u \in \mathbb{H}$,
	\begin{align}
		\label{eqn:delta1.r}
		\gamma_{0}\left\|D \varphi_{\epsilon}(u)\right\|_{\mathbb{H}} \leq \epsilon M_{0}+\langle D \varphi_{\epsilon}(u), u\rangle_{\mathbb{H}} .
	\end{align}
	By the energy equality in Theorem 1 in \cite{taniguchi2010existence}, and (H1) in Assumption \ref{assumption}, 
	\begin{align*}
		\left\|X^{\epsilon}(t)\right\|_{\mathbb{H}}^{2}=&\|\phi(0)\|_{\mathbb{H}}^{2}+2 \int_{0}^{t}{}_{\mathbb{V}}\big\langle  X^{\epsilon}(s), A(s, X^{\epsilon}(s))\big\rangle_{\mathbb{V}^{*}}d s +2 \int_{0}^{t}\big\langle X^{\epsilon}(s), b(s,\omega)\big\rangle_{\mathbb{H}} d s\nonumber\\ 
		& +2 \int_{0}^{t}\big\langle X^{\epsilon}(s), \sigma(s,\omega)\big\rangle d W(s)-\frac{2}{\epsilon} \int_{0}^{t}\big\langle X^{\epsilon}(s),  D \varphi_{\epsilon}(X^{\epsilon}(s))\big\rangle_{\mathbb{H}}ds \nonumber\\
		& +2 \int_{0}^{t} \int_U\big\langle  X^{\epsilon}(s), f(s,u,\omega) \big\rangle_{\mathbb{H}} \widetilde{N}(ds,du)+\int_{0}^{t} \int_{U}\|f(s,u,\omega)\|_{\mathbb{H}}^{2} \widetilde{N}(ds,du)\nonumber\\
		& +\int_{0}^{t} \int_{U}\|f(s,u,\omega)\|_{\mathbb{H}}^{2} \nu(ds,du)+\int_{0}^{t}\|\sigma(s,\omega)\|_{L_{2}^0}^{2} d s\nonumber\\
		\leq & \|\phi(0)\|_{\mathbb{H}}^{2}+\int_{0}^{t}\|X^{\epsilon}(s) \|_{\mathbb{H}}^{2} d s-\frac{2}{\epsilon} \int_{0}^{t}\big\langle X^{\epsilon}(s), D \varphi_{\epsilon}(X^{\epsilon}(s))\big\rangle_{\mathbb{H}} d s\nonumber\\
		& +\int_{0}^{t}\left[\|b(s, \omega)\|_{\mathbb{H}}^{2}+\|\sigma(s, \omega)\|_{L_{2}^0}^{2}+\int_{U}\|f(s, u, \omega)\|_{\mathbb{H}}^{2} \nu(d u)\right] d s \nonumber\\
		& +\beta \int_{0}^{t}\left\|X^{\epsilon}(s)\right\|_{\mathbb{H}}^{2} d s+2 \int_{0}^{t}\left\|X^{\epsilon}(s)\right\|_{\mathbb{V}}\|A(s, 0)\|_{\mathbb{V}^{*}} d s -m_{0} \int_{0}^{t}\left\|X^{\epsilon}(s)\right\|_{\mathbb{V}}^{2} d s \nonumber\\
		& +2 \int_{0}^{t} \int_{U}\big\langle X^{\epsilon}(s), f(s, u, \omega)\big\rangle_{\mathbb{H}}\widetilde{N}(ds,du)+\int_{0}^{t} \int_{U}\|f(s, u, \omega)\|_{\mathbb{H}}^{2} \widetilde{N}(ds,du)\nonumber\\
		&+2 \int_{0}^{t}\big\langle X^{\epsilon}(s), \sigma(s, \omega) d W(s)\big\rangle_{\mathbb{H}}.
	\end{align*}
	By Proposition \ref{prop:varphi_epsilon}, we have
	$$-\big\langle X^{\epsilon}(s), D \varphi_{\epsilon}(X^{\epsilon}(s))\big\rangle_{\mathbb{H}} \leq \varphi_{\epsilon}(0)-\varphi_{\epsilon}\left(X^{\epsilon}(s)\right) \leq 0,$$
	and hence
	\begin{align}\label{varphie}
		&\hspace{-1cm}\frac{2}{\epsilon} \int_{0}^{t}\big\langle X^{\epsilon}(s), D \varphi_{\epsilon}(X^{\epsilon}(s))\big\rangle_{\mathbb{H}} d s+\left\|X^{\epsilon}(t)\right\|_{\mathbb{H}}^{2}\nonumber\\
		\leq & \|\phi(0)\|_{\mathbb{H}}^{2}+(1+\beta)\int_{0}^{t}\|X^{\epsilon}(s) \|_{\mathbb{H}}^{2} d s+C_{m_0}\int_{0}^{t}\|A(s, 0)\|^2_{\mathbb{V}^{*}} d s -\frac{m_{0}}{2} \int_{0}^{t}\left\|X^{\epsilon}(s)\right\|_{\mathbb{V}}^{2} d s\nonumber\\
		& +\int_{0}^{t}\left[\|b(s, \omega)\|_{\mathbb{H}}^{2}+\|\sigma(s, \omega)\|_{L_{2}^0}^{2}+\int_{U}\|f(s, u, \omega)\|_{\mathbb{H}}^{2} \nu(d u)\right] d s\nonumber \\
		& +2 \int_{0}^{t} \int_{U}\left\langle X^{\epsilon}(s), f(s, u, \omega)\right\rangle_{\mathbb{H}}\widetilde{N}(ds,du)+\int_{0}^{t} \int_{U}\|f(s, u, \omega)\|_{\mathbb{H}}^{2} \widetilde{N}(ds,du)\nonumber\\
		&+2 \int_{0}^{t}\left\langle X^{\epsilon}(s), \sigma(s, \omega) d W(s)\right\rangle_{\mathbb{H}}.
	\end{align}
	By the BDG inequality,
	\begin{align*}
		\mathbb{E} \sup _{t \leq T}\left|2\int_{0}^{t}\big\langle X^{\epsilon}(s), \sigma(s, \omega) d W(s)\big\rangle_{\mathbb{H}}\right|^{2} \leq &C_T\mathbb{E} \int_{0}^{T}\|X^{\epsilon}(s)\|_{\mathbb{H}}^{2}\|\sigma(s, \omega)\|_{L_{2}^0}^{2} d s\\
		\leq & \frac{1}{4} \mathbb{E} \sup _{t \leq T}\left\|X^{\epsilon}(t)\right\|_{\mathbb{H}}^{4} +C_{T}  \mathbb{E} \left(\int_{0}^{T}\|\sigma(s, \omega)\|_{L_{2}^0}^{2} d s\right)^{2}.
	\end{align*}
	By Lemma \ref{lem2}, we have
	\begin{align*}
		& \mathbb{E} \sup _{t \leq T}\left(\int_{0}^{t} \int_{U}\|f(s, u, \omega)\|_{\mathbb{H}}^{2} \widetilde{N}(d s, d u)\right)^{2} \leq \mathbb{E} \int_{0}^{T} \int_{U}\|f(s, u, \omega)\|_{\mathbb{H}}^{4} v(d u) d s
	\end{align*}
	and
	\begin{align*}
		& \hspace{-1cm}\mathbb{E} \sup _{t\leq T}\left(\int_{0}^{t} \int_{U} 2\big\langle X^{\epsilon}(s), f(s, u, \omega)\big\rangle_{\mathbb{H}} \widetilde{N}(d s, d u)\right)^{2} \\
		& \leq C_{T} \mathbb{E} \int_{0}^{T} \int_{U}\left\|X^{\epsilon}(s)\right\|_{\mathbb{H}}^{2} \cdot\|f(s, u, \omega)\|_{\mathbb{H}}^{2} v(d u) d s \\
		& \leq \frac{1}{4} \mathbb{E} \sup _{t \leq T}\left\|X^{\epsilon}(t)\right\|_{\mathbb{H}}^{4}+C_{T} \mathbb{E} \int_{0}^{T}\left(\int_{U}\|f(s, u, \omega)\|^{2} v(d u)\right)^{2} d s.
	\end{align*}
	Hence, summing up the above results and using Gr\"onwall's lemma, we obtain
	\begin{align}\label{momes}
		\sup_{\epsilon} \mathbb{E} \sup_{t\leq T}\left\|X^{\epsilon}(t)\right\|_{\mathbb{H}}^{4} 
		\leq &C_{T} \mathbb{E}\|\phi(0)\|_{\mathbb{H}}^{4}+C_{T} \mathbb{E}\int_{0}^{T}\left[\|b(s)\|_{\mathbb{H}}^{2}+\|\sigma(s)\|_{L_{2}^0}^{2}+\int_{U}\|f(s, u)\|_{\mathbb{H}}^{2}\nu(d u)\right]^{2}ds\nonumber \\
		& +C_{T} \mathbb{E} \int_{0}^{T} \int_{U}\|f(s, u)\|_{\mathbb{H}}^{4} \nu(d u) d s < +\infty,	 
	\end{align}
	and
	\begin{align}\label{momes1}
		& \hspace{-0.5cm}	\sup_{\epsilon} \frac{m_{0}}{2} \mathbb{E}\left(\int_{0}^{T}\left\|X^{\epsilon}(s)\right\|_{\mathbb{V}}^{2} d s\right)^{2}\nonumber\\ 
		& \leq  C_{T}\mathbb{E}\|\phi(0)\|_{\mathbb{H}}^{4}+C_{T}\mathbb{E} \int_{0}^{T}\Big[\|b(s)\|_{\mathbb{H}}^{2}+\left\|\sigma(s)\right\|_{L_{2}^{0}}^{2} +\int_{u}\|f(s, u)\|_{\mathbb{H}}^{2} v(d u)\Big]^{2}ds \nonumber\\
		& \quad+C_{T} \mathbb{E} \int_{0}^{T} \int_{U}\|f(s, u)\|_{\mathbb{H}}^{4} \nu(d u) d s  < +\infty.
	\end{align}
	It follows from Eqn. \eqref{varphie} that
	\begin{align}
		\label{eqn:delta2.r}
		\sup_{\epsilon}\mathbb{E} \Big(\int_{0}^{t}\frac{2}{\epsilon}\big\langle X^{\epsilon}(s), D \varphi_{\epsilon}(X^{\epsilon}(s))\big\rangle_{\mathbb{H}} d s\Big)^2< +\infty.
	\end{align}
	Applying the energy equality in Theorem 1 of \cite{taniguchi2010existence} to $\left\|X^{\epsilon}(t)-X^{\delta}(t)\right\|_{\mathbb{H}}^{2}$, and by Proposition \ref{prop:varphi_epsilon}, we obtain
	\begin{align*}
		\left\|X^{\epsilon}(t)-X^{\delta}(t)\right\|_{\mathbb{H}}^{2}=&2 \int_{0}^{t}{}_{\mathbb{V}}\Big\langle X^{\epsilon}(s)-X^{\delta}(s), A\left(s, X^{\epsilon}(s)\right)-A(s, X^{\delta}(s))\Big\rangle_{\mathbb{V}^{*}} d s \\
		& -2\int_{0}^{t}\Big\langle X^{\epsilon}(s)-X^{\delta}(s), \frac{1}{\epsilon} D \varphi_{\epsilon}\left(X^{\epsilon}(s)\right)-\frac{1}{\delta} D \varphi_{\delta}(X^{\delta}(s))\Big\rangle_{\mathbb{H}} d s  \\
		\leq	&  2 \beta \int_{0}^{t}\left\|X^{\epsilon}(s)-X^{\delta}(s)\right\|_{\mathbb{H}}^{2} d s-2 m_{0} \int_{0}^{t}\left\|X^{\epsilon}(s)-X^{\delta}(s)\right\|_{\mathbb{V}}^{2} d s \\
		& +2\left(\frac{1}{\epsilon}+\frac{1}{\delta}\right) \int_{0}^{t}\left\langle D \varphi_{\epsilon}\left(X^{\epsilon}(s)\right), D \varphi_{\delta}(X^{\delta}(s))\right\rangle_{\mathbb{H}} d s .
	\end{align*}
	Furthermore, 
	\begin{align*}
		&\hspace{-0.5cm}\mathbb{E} \sup _{t\leq T}\left\|X^{\epsilon}(t)-X^{\delta}(t)\right\|_{\mathbb{H}}^{2}\\ &\leq  \beta \mathbb{E} \int_{0}^{T}\left\|X^{\epsilon}(s)-X^{\delta}(s)\right\|_{\mathbb{H}}^{2} d s+2 \mathbb{E} \sup_{t\leq T}\left\|D \varphi_{\epsilon}\left(X^{\epsilon}(s)\right)\right\|_{\mathbb{H}}\left\|\frac{1}{\delta} D \varphi_{\delta}(X^{\delta}(s))\right\|_{\mathbb{H}} d s\\
		&\quad+2 \mathbb{E} \sup _{t\leq T}\int_{0}^{t}\left\|D \varphi_{\delta} (X^{\delta}(s) )\right\|_{\mathbb{H}}\left\|\frac{1}{\epsilon} D \varphi_{\epsilon}\left(X^{\epsilon}(s)\right)\right\|_{\mathbb{H}} d s\\
		& \leq 2 \beta \mathbb{E} \int_{0}^{T}\left\|X^{\epsilon}(s)-X^{\delta}(s)\right\|_{\mathbb{H}}^{2} d s\\
		&\quad+2\left\{\mathbb{E} \sup _{t \leq T}\left\|D \varphi_{\epsilon}\left(X^{\epsilon}(t)\right)\right\|_{\mathbb{H}}^{2} \cdot \mathbb{E}\left(\int_{0}^{T} \frac{1}{\delta}\left\|D \varphi_{\delta}(X^{\delta}(s))\right\|_{\mathbb{H}} d s\right)^{2}\right\}^{1 / 2}\\
		&\quad+2\left\{\mathbb{E} \sup_{t \leq T}\left\|D \varphi_{\delta}(X^{\delta}(t))\right\|_{\mathbb{H}}^{2} \cdot \mathbb{E}\left(\int_{0}^{T} \frac{1}{\epsilon}\left\|D \varphi_{E}\left(X^{\epsilon}(s)\right)\right\|_{\mathbb{H}} d s\right)^{2}\right\}^{1/2}.
	\end{align*}
	Note that, by Proposition \ref{prop:varphi_epsilon}, we have
	\begin{align*}
		\mathbb{E}\sup_{t \leq T}\left\|D \varphi_{\epsilon}(X^{\epsilon}(t))\right\|_{\mathbb{H}}^{2}\leq \mathbb{E}\sup_{t \leq T} \varphi_{\epsilon}(X^{\epsilon}(t))\leq \epsilon \mathbb{E}\sup_{t \leq T}\big[\varphi(X^{\epsilon}(t))-\varphi(0)\big] 
		\leq C\epsilon.
	\end{align*}
	Then, by  Eqn. \eqref{eqn:delta1.r} and Eqn. \eqref{eqn:delta2.r}, we have
	\begin{align}\label{varDvarphi}
		\sup_{\delta} \mathbb{E}\left(\int_{0}^{T}\frac{1}{\delta}\| D \varphi_{\delta}(X^{\delta}(s))\|_{\mathbb{H}}ds\right)^2
		& \leq 2\left(\frac{M_{0} T}{\gamma_{0}}\right)^{2}+C \sup_{\delta} \mathbb{E}\left(\int_{0}^{T}\left\langle\frac{1}{\delta} D \varphi_{\delta}(X^{\delta}(s)), X^{\delta}(s)\right\rangle d s \right)^{2}\nonumber \\
		& <+\infty.
	\end{align}
	
	Then we have by Gr\"onwall's lemma that as $\epsilon, \delta \rightarrow 0$,
	$$
	\mathbb{E}\sup_{t \leq T} \| X^{\epsilon}(t)-X^{\delta}(t) \|_{\mathbb{H}}^2 \leqslant C\left(\epsilon^{1/2}+\delta^{1/2}\right)\rightarrow 0 ,
	$$
	and 
	$$
	\mathbb{E}\int_0^T \| X^{\epsilon}(t)-X^{\delta}(t) \|_{\mathbb{V}}^2dt\rightarrow 0.
	$$
	Hence, $\{X^{\epsilon}\}$ is a Cauchy sequence which converges to some   
	$X$ in  $$L^2( [0, T]\times \Omega ;\mathbb{V})\cap L^2(\Omega; D([0,T];\mathbb{H})).$$
	
	Set $$\eta^{\epsilon}(t):=\frac{1}{\epsilon} \int_0^t D \varphi_{\epsilon}\left(X^{\epsilon}(s)\right) d s.$$ Then by  Eqn. \eqref{varDvarphi}, we have
	\begin{equation}\label{etamom}
		\sup_{\epsilon}\mathbb{E}\big(|\eta^{\epsilon}|_{0}^{T}\big)^2<+\infty.
	\end{equation}
	Recall that $A$ is bounded and thus by equations \eqref{momes}-\eqref{momes1} and \eqref{etamom}, there exists a
	space $\Omega_0\subset\Omega$ with $P(\Omega_0)=1$ and a subsequence $\left\{\epsilon_k\right\}$, such that for all $\omega \in \Omega_0$, $\liminf_{\epsilon_k} |\eta^{\epsilon_k}(\omega)|_{0}^{T}<+\infty$, and as $\epsilon_k\to0$, 
	\begin{equation}\label{ABconV}\begin{aligned}
			\sup_{-h\leq t\leq T} \| X^{\epsilon_k}(t,\omega)-X(t,\omega)\|_{\mathbb{H}}^2\rightarrow 0,\qquad\int_{0}^{T}\| X^{\epsilon_k}(t,\omega)-X(t,\omega)\|_{\mathbb{V}}^2dt \rightarrow 0,\\
			\text{and}\quad 	A\left(\cdot, X^{\epsilon_k}(\cdot, \omega)\right) \longrightarrow B(\cdot, \omega) \quad\text { weakly in } L^2\left([0, T] ; \mathbb{V}^*\right).
	\end{aligned}\end{equation}
	Note that
	for any $v \in L^2([0, T] ; \mathbb{V})$, $\delta>0$, $\omega \in \Omega_0$, and $t\in[0,T]$,
	\begin{align} 
		\label{eqn:Appendixeqn1}
		&\hspace{-0.3cm}\int_{0}^{t}{}_{\mathbb{V}}\Big\langle v(s), B(s,\omega)-A(s,X(s,\omega)-\delta v(s))\Big\rangle_{\mathbb{V}^{*}}ds\nonumber\\
		=&\int_{0}^{t}{}_{\mathbb{V}}\Big\langle v(s), B(s,\omega)-A(s,X^{\epsilon_k}(s,\omega))\Big\rangle_{\mathbb{V}^{*}}ds\\
		&+\frac1{\delta}\int_{0}^{t}{}_{\mathbb{V}}\Big\langle X^{\epsilon_k}(s,\omega)-X(s,\omega)+\delta v(s),\, A\left(s, X^{\epsilon_k}(s,\omega)\right)-A(s, X(s,\omega)-\delta v(s))\Big\rangle_{\mathbb{V}^{*}}ds\nonumber\\
		&-\frac1{\delta}\int_{0}^{t}{}_{\mathbb{V}}\Big\langle X^{\epsilon_k}(s,\omega)-X(s,\omega), \,A\left(s, X^{\epsilon_k}(s,\omega)\right)-A(s, X(s,\omega)-\delta v(s))\Big\rangle_{\mathbb{V}^{*}}ds.\nonumber
	\end{align}
	By (H1) in Assumption \ref{assumption}, we have 
	\begin{align*}   
		&\hspace{-0.3cm}\int_{0}^{t}{}_{\mathbb{V}}\Big\langle X^{\epsilon_k}(s,\omega)-X(s,\omega)+\delta v(s), \,A\left(s, X^{\epsilon_k}(s,\omega)\right)-A(s, X(s,\omega)-\delta v(s))\Big\rangle_{\mathbb{V}^{*}}ds\\
		\leq& \beta \int_{0}^{t}\| X^{\epsilon_k}(s,\omega)-X(s,\omega)+\delta v(s) \|_{\mathbb{H}}^2ds,\\
		\mbox{and}\\
		&-\frac1{\delta}\int_{0}^{t}{}_{\mathbb{V}}\Big\langle X^{\epsilon_k}(s,\omega)-X(s,\omega), \,A\left(s, X^{\epsilon_k}(s,\omega)\right)-A(s, X(s,\omega)-\delta v(s))\Big\rangle_{\mathbb{V}^{*}}ds\\
		\leq &\frac C{\delta}\int_{0}^{t}\| X^{\epsilon_k}(s,\omega)-X(s,\omega) \|_{\mathbb{V}}ds.
	\end{align*}
	Sending $\epsilon_k \rightarrow 0$ of equation \eqref{eqn:Appendixeqn1} and applying \eqref{ABconV}, we obtain
	$$\int_{0}^{t}{}_{\mathbb{V}}\Big\langle v(s), B(s,\omega)-A(s,X(s,\omega)-\delta v(s))\Big\rangle_{\mathbb{V}^{*}}ds\leq \beta \delta\int_{0}^{t}\|v(s) \|_{\mathbb{H}}^2ds.$$
	Now sending to $\delta\to0$, we obtain by the hemicontinuity of $A$ that 
	$$\int_{0}^{t}{}_{\mathbb{V}}\Big\langle v(s), B(s,\omega)-A(s,X(s,\omega))\Big\rangle_{\mathbb{V}^{*}}ds\leq 0.
	$$
	The arbitrariness of ${v}$ leads to $B(s, \omega)=A(s, X(s,\omega))$ for almost all $s \in[0,t]$.
	
	Set for any $\omega\in\Omega_0$,
	\begin{align*}
		\eta{(t,\omega)}=&\phi{(0)}+\int_0^t b(s, \omega) d s+\int_0^t \sigma(s, \omega) d W(s)+\int_0^t \int_U f(s, u, \omega) \widetilde{N}(d s, d u)\\
		&+\int_0^t A(s, X(s,\omega)) d s-X(t,\omega).
	\end{align*}
	Then for any $t \in[0, T]$,
	$$\eta^{\varepsilon_k}(t, \omega) \longrightarrow \eta{(t, \omega)} \quad\text{weakly in}\; \mathbb{V}^*.$$	
	Now, it suffices to show that $|\eta(\cdot,\omega)|_{0}^{T}<\infty$ and for any $\alpha\in \mathcal{D}([0,T];\mathbb{H})$ and any $0\leq s\leq t\leq T$,
	\begin{align}\label{81}
		\int_s^t\big\langle X(r,\omega)-\alpha(r),d\eta(r,\omega)\big\rangle_{\mathbb{H}}\geq \int_s^t\big[\varphi(X(r,\omega))-\varphi(\alpha(r))\big]dr.
	\end{align}
	To this end, let $0=t_{1}<t_{2}<\cdots<t_{m}=T$ be a partition of $[0,T]$ and set $\alpha^{m}(t):=\sum_{j=1}^{m}a_{j}\mathbbm{1}_{[t_{j},t_{j+1})}(t)$ where $a_{j}\in\mathbb{V}$  such that
	$$\lim_{m\to\infty}\sup_{t\leq T}\|\alpha^m(t)-\alpha(t)\|_{\mathbb{H}}=0.$$
	On one hand, note that
	\begin{align*}
		\sum_{j=1}^{m}\|\eta(t_{j+1},\omega)-\eta(t_{j},\omega)\|_{\mathbb{H}}&=\sum_{j=1}^{m}\sup_{v\in\mathbb{V},\|v\|_{\mathbb{H}}\leq 1}\big|\big\langle v,\eta(t_{j+1},\omega_{0})-\eta(t_{j},\omega)\big\rangle_{\mathbb{H}}\big|\\
		&\leq\sum_{j=1}^{m}\lim_{\epsilon_k\rightarrow0}\big\|\eta^{\epsilon_k}(t_{j+1},\omega)-\eta^{\epsilon_k}(t_{j},\omega)\big\|_{\mathbb{H}}\\
		&\leq\sup_{\epsilon_k}|\eta^{\epsilon_k}(\cdot,\omega)|_{0}^{T}<\infty.
	\end{align*}
	Taking supremum over partitions of $[0,T]$ gives $$|\eta(\cdot,\omega)|_{0}^{T}<\infty.$$
	On the other hand, by the lower semicontinuity of $\varphi$, for any $0\leq s\leq t\leq T$,
	\begin{align*}
		&\hspace{-1cm}\int_s^t\big[\varphi(X(r,\omega))-\varphi(\alpha(r))\big]dr\\
		\leq& \liminf_{\epsilon_k\rightarrow0}\int_s^t\big[\varphi(X^{\epsilon_k}(r,\omega))-\varphi(\alpha(r))\big]dr\\
		\leq&\liminf_{\epsilon_k\rightarrow0}\int_s^t\big\langle X^{\epsilon_k}(r,\omega),d\eta^{\epsilon_k}(r,\omega)\big\rangle_{\mathbb{H}}
		-\liminf_{\epsilon_k\rightarrow0}\int_s^t\langle\alpha(r),d\eta^{\epsilon_k}(r,\omega)\rangle_{\mathbb{H}}.
	\end{align*}
	Moreover,
	\begin{align}
		&\hspace{-1cm}\Bigg|\int_{s}^{t}\big\langle X^{\epsilon_k}(r,\omega),d\eta^{\epsilon_k}(r,\omega)\big\rangle_{\mathbb{H}}-\int_{s}^{t}\big\langle X(r,\omega_{0}),d\eta^{\epsilon_k}(r,\omega)\big\rangle_{\mathbb{H}}\Bigg|\nonumber\\
		\leq&\sup_{s\leq r\leq t}\big\|X^{\epsilon_k}(r,\omega)-X(r,\omega)\big\|_{\mathbb{H}}\sup_{\epsilon_k}|\eta^{\epsilon_k}(\cdot,\omega)|_{0}^{T}\nonumber\\
		\leq& C\sup_{s\leq r\leq t}\big\|X^{\epsilon_k}(r,\omega)-X(r,\omega)\big\|_{\mathbb{H}}\nonumber\\
		&\rightarrow0,\hspace{5cm} \text{as}\quad \epsilon_k\rightarrow0. \label{08}
	\end{align}
	And \begin{align}\label{081}
		\int_s^t\langle X(r,\omega),d\eta^{\epsilon_k}(r,\omega)\big\rangle_{\mathbb{H}}\to \int_s^t\langle X(r,\omega),d\eta(r,\omega)\big\rangle_{\mathbb{H}},\quad \text{as}\quad \epsilon_k\rightarrow0.
	\end{align}
	For any $\delta'>0$, choose $k_0$ sufficiently large such that for all $k>k_0$ and all $j=1,\ldots,m$,
	\begin{align*}
		\Big|{}_{\mathbb{V}}\langle a_j, \eta^{\epsilon_k}(t_{j+1},\omega)-\eta^{\epsilon_k}(t_{j},\omega)\rangle_{\mathbb{V}^{*}}-{}_{\mathbb{V}}\langle a_j,\eta(t_{j+1},\omega)-\eta(t_{j},\omega)\rangle_{\mathbb{V}^{*}}\Big|\leq\frac{\delta'}{2m}.
	\end{align*}
	Denote $$L:=\sup_{k}|\eta^{\epsilon_k}(\cdot,\omega)|_{0}^{T}\vee|\eta(\cdot,\omega)|_{0}^{T}.$$ 
	Take $m$ sufficiently large such that $$\sup_{t\leq T}\|\alpha^m(t)-\alpha(t)\|_{\mathbb{H}}<\frac{\delta'}{4L}.$$ It then follows that
	\begin{align}\label{011}
		&\hspace{-1cm}\Bigg|\int_s^t \Big\langle\alpha(r),d\eta^{\epsilon_k}(r,\omega)-d\eta(r,\omega)\Big\rangle_{\mathbb{H}}\Bigg|\nonumber\\
		&\leq \Bigg|\int_s^t \Big\langle\alpha(r)-\alpha^m(r),d\eta^{\epsilon_k}(r,\omega)-d\eta(r,\omega)\Big\rangle_{\mathbb{H}}\Bigg|\nonumber\\
		&\quad+
		\sum_{j=1}^m\Bigg|{}_{\mathbb{V}}\Big\langle a_j, \eta^{\epsilon_k}(t_{j+1},\omega)-\eta^{\epsilon_k}(t_{j},\omega)-\eta(t_{j+1},\omega)+\eta(t_{j},\omega)\Big\rangle_{\mathbb{V}^{*}}\Bigg|\nonumber\\
		&\leq 2L\frac{\delta'}{4L}+m\frac{\delta'}{2m}\nonumber\\
		&=\delta'.
	\end{align}
	Combining  Eqn.   (\ref{08}) and  Eqn.   (\ref{011}), we obtain  Eqn.   (\ref{81}).
	
	At last, we aim to establish the uniqueness. Suppose $(X'(t),\eta'(t))$ is another solution. Similar to Proposition \ref{prop1}, we have 
	$$\mathbb{E}\sup_{-h\leq t\leq T}(\|X(t)\|_{\mathbb{H}}^{2}\vee|X'(t)\|_{\mathbb{H}}^{2})<\infty.$$ 
	By the energy equality again, we obtain
	\begin{align*}
		&\|X(t)-X'(t)\|_{\mathbb{H}}^{2}\\
		&=2\int_{0}^{t}\Bigg[{}_{\mathbb{V}}\Big\langle X(s)-X'(s), A(s,X(s))-A(s,X'(s))\Big\rangle_{\mathbb{V}^{*}}-\Big\langle X(s)-X'(s),d\eta(s)-d\eta'(s)\Big\rangle_{\mathbb{H}}\Bigg]ds\\
		&\leq \beta\int_{0}^{t}\|X(s)-X'(s)\|^2_{\mathbb{H}}ds-m_0\int_{0}^{t}\|X(s)-X'(s)\|^2_{\mathbb{V}}ds.
	\end{align*}
	We have by Gr\"onwall's lemma that
	\begin{align*}
		\mathbb{E}\sup_{-h\leq s\leq T}\|X(s)-X'(s)\|_{\mathbb{H}}^{2}=0\quad\text{and}\quad
		\mathbb{E}\int_{-h}^{T}\|X(s)-X'(s)\|_{\mathbb{V}}^{2}ds=0,
	\end{align*}
	which complete the proof of pathwise uniqueness.
\end{proof}

\section{Proof of Proposition \ref{prop1}} 
\label{appendix:prop1}
\begin{proof}
	It is easy to see that
	\begin{equation}\begin{split}
			\label{phi}
			\sup_n \mathbb{E}\sup_{t\in[-h,0]}\|X^{n}(t)\|_{\mathbb{H}}^4=\mathbb{E}\sup_{t\in[-h,0]}\|\phi(t)\|_{\mathbb{H}}^4<\infty,\\
			\sup_n \mathbb{E}\int_{-h}^0 \|X^{n}(t)\|_{\mathbb{V}}^2dt=\mathbb{E}\int_{-h}^0 \|\phi(t)\|_{\mathbb{V}}^2dt<\infty.
		\end{split}
	\end{equation}
	Applying the energy equality in Theorem \ref{wellposed1} to $e^{-\beta t}\|X^{n+1}(t)\|_{\mathbb{H}}^{2}$, we have that for $0\leq t\leq T$,
	\begin{align*}
		&e^{-\beta t}\|X^{n+1}(t)\|_{\mathbb{H}}^{2}\\
		&=\|\phi({0})\|_{\mathbb{H}}^{2}+2\int_{0}^{t}e^{-\beta s}\big\langle X^{n+1}(s),\sigma(s,X^{n}_{s})dW(s)\big\rangle_{\mathbb{H}}+2\int_{0}^{t}e^{-\beta s}\big\langle X^{n+1}(s),b(s,X^{n}_{s})\big\rangle_{\mathbb{H}}ds\\
		&\quad+2\int_{0}^{t}e^{-\beta s}{}_{\mathbb{V}}\big\langle X^{n+1}(s), A(s,X^{n+1}(s))\big\rangle_{\mathbb{V}^{*}}ds-2\int_{0}^{t}e^{-\beta s}\big\langle X^{n+1}(s),d\eta^{n+1}(s)\big\rangle_{\mathbb{H}}\\
		&\quad+2\int_{0}^{t}e^{-\beta s}\int_{U}\big\langle X^{n+1}(s),f(s,X^{n}_{s},u)\big\rangle_{\mathbb{H}}\widetilde{N}(ds,du)+\int_{0}^{t}e^{-\beta s}\Big[\|\sigma(s,X^{n}_{s})\|_{L_{2}^{0}}^{2}-\beta\|X^{n+1}(s)\|_{\mathbb{H}}^{2}\Big]ds\\
		&\quad+\int_{0}^{t}e^{-\beta s}\int_{U}\|f(s,X^{n}_{s},u)\|_{\mathbb{H}}^{2}\nu(du)ds+\int_{0}^{t}\int_{U}e^{-\beta s}\|f(s,X^{n}_{s},u)\|_{\mathbb{H}}^{2}\widetilde{N}(ds,du).
	\end{align*}
	From condition (H1), we have that
	\begin{align*}
		2{}_{\mathbb{V}}\big\langle X^{n+1}(t),A(t,X^{n+1}(t))\big\rangle_{\mathbb{V}^{*}}
		&\leq {}_{\mathbb{V}}\big\langle X^{n+1}(t)-0, A(t,X^{n+1}(t))-A(t,0)\big\rangle_{\mathbb{V}^{*}}+\|X^{n+1}(t)\|_{\mathbb{V}}\|A(t,0)\|_{\mathbb{V}^{*}}\\
		&\leq\beta\|X^{n+1}(t)\|_{\mathbb{H}}^{2}-\frac{m_{0}}{2}\|X^{n+1}(t)\|_{\mathbb{V}}^{2}+C.
	\end{align*}	
	Note that
	\begin{align*}
		&\hspace{-1cm}\mathbb{E}\sup_{t\in[0,T]}\left|2\int_{0}^{t}e^{-\beta s}\big\langle X^{n+1}(s),b(s,X^{n}_{s})\big\rangle_{\mathbb{H}}ds\right|^2\\
		&\leq\frac16\mathbb{E}\sup_{t\in[0,T]}e^{-2\beta t}\|X^{n+1}(t)\|^4_{\mathbb{H}}
		+C_T\mathbb{E}\int_0^Te^{-2\beta s}\|b(s,X^{n}_{s})\|^4_{\mathbb{H}}ds.
	\end{align*}	
	Applying the BDG inequality, we obtain
	\begin{align*}
		&\hspace{-1cm}\mathbb{E}\sup_{t\in[0,T]}\left|2\int_{0}^{t}e^{-\beta s}\big\langle X^{n+1}(s),\;\sigma(s,X^{n}_{s})dW(s)\big\rangle_{\mathbb{H}}ds\right|^2\\
		&\leq\frac16\mathbb{E}\sup_{t\in[0,T]}e^{-2\beta t}\|X^n(t)\|^4_{\mathbb{H}}
		+C_T\mathbb{E}\int_0^Te^{-2\beta s}\|\sigma(s,X^{n}_{s})\|^4_{L_2^0}ds.
	\end{align*}	
	By Lemma \ref{lem2}, we have
	\begin{align*}
		&\hspace{-1cm}\mathbb{E}\sup_{0\leq s\leq T}\Bigg|2\int_{0}^{s}\int_{U}e^{-\beta r}\big\langle X^{n+1}(r),f(r,X^{n}_{r},u)\big\rangle_{\mathbb{H}}\widetilde{N}(dr,du)\Bigg|^2\\
		&\leq\mathbb{E}\int_{0}^{T}\int_{U}e^{-2\beta r}\|X^{n+1}(r)\|^2_{\mathbb{H}}
		\|f(r,X^{n}_{r},u)\|^2_{\mathbb{H}}\nu(du)dr\\
		&\leq \frac{1}{6}\mathbb{E}\sup_{0\leq t
			\leq T}e^{-2\beta t}\|X^{n+1}(t)\|_{\mathbb{H}}^{4}+C\mathbb{E}\int_{0}^{T}\left(\int_{U}e^{-\beta s}\|f(s,X^{n}_{s},u)\|_{\mathbb{H}}^{2}\nu(du)\right)^2ds,
	\end{align*}
	and
	\begin{align*}
		&\mathbb{E}\sup_{0\leq t\leq T}\Bigg|\int_{0}^{t}\int_{U}e^{-\beta r}\|f(r,X^{n}_{r},u)\|^2_{\mathbb{H}}\widetilde{N}(dr,du)\Bigg|^2
		\leq	C\mathbb{E}\int_{0}^{T}\int_{U}e^{-2\beta s}\|f(s,X^{n}_{s},u)\|_{\mathbb{H}}^{4}\nu(du)ds.
	\end{align*}
	Summing up the above estimates gives 
	\begin{equation}\label{6}
		\begin{split}	
			&\mathbb{E}\sup_{0\leq s\leq T}e^{-2\beta s}\|X^{n+1}(s)\|_{\mathbb{H}}^{4}+\frac{m_{0}^2}{4}\mathbb{E}\left(\int_{0}^{T}e^{-\beta s}
			\|X^{n+1}(s)\|_{\mathbb{V}}^{2}ds\right)^2\\
			&\leq C(1+\mathbb{E} \|\phi({0})\|_{\mathbb{H}}^{4})+\frac{1}{2}\mathbb{E}\sup_{0\leq s\leq T}e^{-2\beta s}\|X^{n+1}(s)\|_{\mathbb{H}}^{4}+C_T\int_{0}^{T}H\Big(s,\mathbb{E}\sup_{r\in[-h,s]}\|X^{n}(r)\|_{\mathbb{H}}^{4}\Big)ds,
		\end{split}
	\end{equation}
	which implies that
	\begin{equation}\label{xn1}
		\begin{split}
			\mathbb{E}\sup_{0\leq s\leq T}\|X^{n+1}(s)\|_{\mathbb{H}}^{4}
			\leq C_T(1+\|\phi\|_{[-h,0]}^4)+C_{T}\int_{0}^{T}H\Big(s,\mathbb{E}\sup_{r\in[-h,s]]}\|X^{n}(r)\|_{\mathbb{H}}^{4}\Big)ds.
		\end{split}    	
	\end{equation}
	By (H2), there exists a continuous nondecreasing function $z(t)$ satisfying
	$$
	z(t)=z(0)+C_{T}\int_0^tH(s,z(s))ds,
	$$
	where $z(0):=C_{T}(1+\|\phi\|_{[-h,0]}^{4})$. We now claim that
	\begin{align*}
		\mathbb{E}\sup_{-h\leq s\leq T}\|X^{n+1}(s)\|_{\mathbb{H}}^{4}\leq z(t).	
	\end{align*}
	Indeed, for $n=0$, we have $$\mathbb{E}\|X^{0}\|_{[-h,T]}^{4}=\|\phi\|_{[-h,0]}^{4}\leq z(0)\leq z(t),
	$$ and then
	\begin{align*}
		\mathbb{E}\sup_{-h\leq s\leq t}\|X^{1}(s)\|_{\mathbb{H}}^{4}\leq&C_{T}(1+\mathbb{E}\|\phi\|_{[-h,0]}^{4})+C_{T}\int_{0}^{t}H\Big(s,\mathbb{E}\sup_{-h\leq r\leq s}\|X^0(r)\|_{\mathbb{H}}^{4}\Big)ds\\
		\leq&z(0)+C_{T}\int_{0}^{t}H(s,z(s))ds\\
		=&z(t).
	\end{align*}
	Now supposing 
	\begin{align*}
		\mathbb{E}\sup_{-h\leq s\leq t}\|X^{n}(s)\|_{\mathbb{H}}^{4}\leq z(t),
	\end{align*}
	we have from Eqn. (\ref{xn1}) that
	\begin{align*}
		\mathbb{E}\sup_{-h\leq s\leq t}\|X^{n+1}(s)\|_{\mathbb{H}}^{4}
		&\leq C_{T}(1+\mathbb{E}\|\phi\|_{[-h,0]}^{4})+C_{T}\int_{0}^{t}H\Big(s,\mathbb{E}\sup_{-h\leq r\leq s}\|X^{n}(r)\|_{\mathbb{H}}^{4}\Big)ds\\
		&\leq z(0)+C_{T}\int_{0}^{t}H(s,z(s))ds\\
		&=z(t).
	\end{align*}
	The continuity of $z$ and  Eqn.   (\ref{phi}) yield
	\begin{align}\label{7}
		\sup_n\mathbb{E}\sup_{-h\leq s\leq T}\|X^{n}(s)\|_{\mathbb{H}}^{4}<\infty.
	\end{align}
	Plugging  Eqn.   (\ref{7}) into Eqn.   (\ref{6}) gives
	\begin{align*}
		\sup_n\mathbb{E}\left(\int_{0}^{T}\|X^{n}(s)\|_{\mathbb{V}}^{2}ds\right)^2<\infty.
	\end{align*}
	Applying the energy equality to $\|X^{n+1}(t)-a_0\|_{\mathbb{H}}^{2}$ and by Lemma \ref{lem-1}, we have
	\begin{align*}
		&\mathbb{E}\sup_{0\leq s\leq T}\|X^{n+1}(s)-a_0\|_{\mathbb{H}}^{4}+m_{1}^2\mathbb{E}\big(|\eta^{n+1}|_{0}^{T}\big)^2\\
		& \leq 1+\mathbb{E}\|\phi({0})-a_0\|_{\mathbb{H}}^{4}+\frac{1}{2}\mathbb{E}\sup_{0\leq s\leq T}\|X^{n+1}(s)-a_0\|_{\mathbb{H}}^{4}+\int_{0}^{T}(\beta+k_1)^2\mathbb{E}\|X^{n+1}(s)-a_0\|_{\mathbb{H}}^{4}ds\\
		&\quad+C\int_{0}^{T}H\Big(s,\mathbb{E}\sup_{-a(s)\leq r\leq 0}\|X^{n}(r+s)\|_{\mathbb{H}}^{4}\Big)ds+k_1^2m_1^2T.
	\end{align*}
	Then we have by  Eqn.   (\ref{7}) that
	\begin{align*}
		\sup_{n}\mathbb{E}\big(|\eta^{n+1}|_{0}^{T}\big)^2<\infty,
	\end{align*}
	which completes the proof.
\end{proof}

\section{Proof of Theorem \ref{wellposed}} 
\label{appendix:wellposed}
\begin{proof}
	Set $$\tau^{n}_{R}:=\inf\Big\{t\geq0;\;|X^{n}(t)\|_{\mathbb{H}}>R\Big\},\quad \tau^{m,n}_{R}:=\tau^{m}_{R}\wedge \tau^{n}_{R}.$$ We have by applying the energy equality that, for $0\leq t\leq T\wedge\tau^{m,n}_{R}$,
	\begin{align}
		\label{eqn:Appendix_d0}
		&e^{-\beta t}\|X^{m}(t)-X^{n}(t)\|_{\mathbb{H}}^{2}\\
		&=2\int_{0}^{t}e^{-\beta s}\Bigg[\Big\langle X^{m}(s)-X^{n}(s),(\sigma(s,X^{m-1}_{s})-\sigma(s,X^{n-1}_{s}))dW(s)\Big\rangle_{\mathbb{H}}\nonumber\\
		&\hspace{2.9cm}+\Big\langle X^{m}(s)-X^{n}(s),b(s,X^{m-1}_{s})-b(s,X^{n-1}_{s})\Big\rangle_{\mathbb{H}}ds\nonumber\\
		&\hspace{2.9cm}+{}_{\mathbb{V}}\Big\langle X^{m}(s)-X^{n}(s),A(s,X^{m}(s))-A(s,X^{n}(s))\Big\rangle_{\mathbb{V}^{*}}ds\nonumber\\
		&\hspace{2.9cm}+\int_{U}\Big\langle X^{m}(s)-X^{n}(s),f(s,X^{m-1}_{s},u)-f(s,X^{n-1}_{s},u) \Big\rangle_{\mathbb{H}}\widetilde{N}(ds,du)\nonumber\\
		&\hspace{2.9cm}-\Big\langle X^{m}(s)-X^{n}(s),d\eta^{m}(s)-d\eta^{n}(s)\Big\rangle_{\mathbb{H}}\Bigg]\nonumber\\
		&\quad+\int_{0}^{t}e^{-\beta s}\Bigg[\Big\|\sigma(s,X^{m-1}_{s})-\sigma(s,X^{n-1}_{s})\Big\|^{2}_{L_{2}^{0}}ds-\beta\|X^{m}(s)-X^{n}(s)\|_{\mathbb{H}}^{2}ds\nonumber\\
		&\hspace{2.9cm}+\int_{U}\Big\|f(s,X^{m-1}_{s},u)-f(s,X^{n-1}_{s},u)\Big\|_{\mathbb{H}}^{2}\nu(du)ds\nonumber\\
		&\hspace{2.9cm}+\int_{U}\Big\|f(s,X^{m-1}_{s},u)-f(s,X^{n-1}_{s},u)\Big\|_{\mathbb{H}}^{2}\widetilde{N}(ds,du)\Bigg]\nonumber\\
		&\leq\frac14\sup_{s\in[0,t]}e^{-\beta s}\|X^{m}(s)-X^{n}(s)\|_{\mathbb{H}}^2\nonumber\\
		&\quad+2\int_{0}^{t}e^{-\beta s}\Bigg[
		2\|b(s,X^{m-1}_{s})-b(s,X^{n-1}_{s})\|^2_{\mathbb{H}}ds\nonumber\\
		&\hspace{3cm}+\Big\langle X^{m}(s)-X^{n}(s),(\sigma(s,X^{m-1}_{s})-\sigma(s,X^{n-1}_{s}))dW(s)\Big\rangle_{\mathbb{H}}\nonumber\\
		&\hspace{3cm}+\frac{\beta}2\|X^{m}(s)-X^{n}(s)\|_{\mathbb{H}}^2-m_0\|X^{m}(s)-X^{n}(s)\|_{\mathbb{V}}^2\nonumber\\
		&\hspace{3cm}+\int_{U}\Big\langle X^{m}(s)-X^{n}(s),f(s,X^{m-1}_{s},u)-f(s,X^{n-1}_{s},u) \Big\rangle_{\mathbb{H}}\widetilde{N}(ds,du)\Bigg]\nonumber\\
		&\quad+\int_{0}^{t}e^{-\beta s}\Bigg[\Big\|\sigma(s,X^{m-1}_{s})-\sigma(s,X^{n-1}_{s})\Big\|^{2}_{L_{2}^{0}}ds-\beta\|X^{m}(s)-X^{n}(s)\|_{\mathbb{H}}^{2}ds\nonumber\\
		&\hspace{3cm}+\int_{U}\Big\|f(s,X^{m-1}_{s},u)-f(s,X^{n-1}_{s},u)\Big\|_{\mathbb{H}}^{2}\nu(du)ds\nonumber\\
		&\hspace{3cm}+\int_{U}\Big\|f(s,X^{m-1}_{s},u)-f(s,X^{n-1}_{s},u)\Big\|_{\mathbb{H}}^{2}\widetilde{N}(ds,du)\Bigg]\nonumber\\
		&\leq \frac16\sup_{s\in[0,t]}e^{-\beta s}\|X^{m}(s)-X^{n}(s)\|_{\mathbb{H}}^2
		-2m_0\int_{0}^{t}e^{-\beta s}\|X^{m}(s)-X^{n}(s)\|_{\mathbb{V}}^2ds\nonumber\\
		&\quad+2\int_{0}^{t}e^{-\beta s}\Bigg[
		2\|b(s,X^{m-1}_{s})-b(s,X^{n-1}_{s})\|^2_{\mathbb{H}}
		+\Big\|\sigma(s,X^{m-1}_{s})-\sigma(s,X^{n-1}_{s})\Big\|^{2}_{L_{2}^{0}}\nonumber\\
		&\hspace{3cm}+\int_{U}\Big\|f(s,X^{m-1}_{s},u)-f(s,X^{n-1}_{s},u)\Big\|_{\mathbb{H}}^{2}\nu(du)\Bigg]ds\nonumber\\
		&\quad+2\int_{0}^{t}e^{-\beta s}\Bigg[
		\Big\langle X^{m}(s)-X^{n}(s),(\sigma(s,X^{m-1}_{s})-\sigma(s,X^{n-1}_{s}))dW(s)\Big\rangle_{\mathbb{H}}\nonumber\\
		&\hspace{3cm}+\int_{U}\Big\langle X^{m}(s)-X^{n}(s),f(s,X^{m-1}_{s},u)-f(s,X^{n-1}_{s},u) \Big\rangle_{\mathbb{H}}\widetilde{N}(ds,du)\nonumber\\
		&\hspace{3cm}+\int_{U}\Big\|f(s,X^{m-1}_{s},u)-f(s,X^{n-1}_{s},u)\Big\|_{\mathbb{H}}^{2}\widetilde{N}(ds,du)\Bigg]  \nonumber        \end{align}
	Applying BDG's inequality and Lemma \ref{lem2} we have 
	\begin{align*}
		&\mathbb{E}\sup_{t\in[0,T]}\left|2\int_{0}^{t}e^{-\beta s}\Big\langle X^{m}(s)-X^{n}(s),(\sigma(s,X^{m-1}_{s})-\sigma(s,X^{n-1}_{s}))dW(s)\right|\\
		&\leq \frac16\mathbb{E}\sup_{s\in[0,T]}e^{-\beta s}\|X^{m}(s)-X^{n}(s)\|_{\mathbb{H}}^2
		+C\mathbb{E}\int_{0}^{T}e^{-\beta s}
		\Big\|\sigma(s,X^{m-1}_{s})-\sigma(s,X^{n-1}_{s})\Big\|^{2}_{L_{2}^{0}}ds.
	\end{align*}
	and
	\begin{align*} 
		&\mathbb{E}\sup_{t\in[0,T]}\left|2\int_{0}^{t}e^{-\beta s}\int_{U}\Big\langle X^{m}(s)-X^{n}(s),f(s,X^{m-1}_{s},u)-f(s,X^{n-1}_{s},u) \Big\rangle_{\mathbb{H}}\widetilde{N}(ds,du)\right|\\
		&\leq \frac16\mathbb{E}\sup_{s\in[0,T]}e^{-\beta s}\|X^{m}(s)-X^{n}(s)\|_{\mathbb{H}}^2
		+C\mathbb{E}\int_{0}^{T}e^{-\beta s}\Big(\int_{U}\Big\|f(s,X^{m-1}_{s},u)-f(s,X^{n-1}_{s},u)\Big\|_{\mathbb{H}}\nu(du)\Big)^{2}ds\\
		&\quad+C\mathbb{E}\int_{0}^{T}e^{-\beta s}\Big\|f(s,X^{m-1}_{s},u)-f(s,X^{n-1}_{s},u)\Big\|_{\mathbb{H}}^2\nu(du)ds.
	\end{align*}
	Taking expectations on both sides, and applying (H1) and (H3), we obtain
	\begin{align}
		\label{eqn:Appendix_d}
		&\mathbb{E}\sup_{0\leq s\leq T}\Big\|X^{m}(s\wedge\tau^{m,n}_{R})-X^{n}(s\wedge\tau^{m,n}_{R})\Big\|_{\mathbb{H}}^{2}+m_{0} \mathbb{E}\int_{0}^{T}\Big\|X^{m}(s\wedge\tau^{m,n}_{R})-X^{n}(s\wedge\tau^{m,n}_{R})\Big\|_{\mathbb{V}}^{2}ds\\
		&\leq e^{\beta T}\mathbb{E}\sup_{0\leq s\leq T}e^{-\beta s}\Big\|X^{m}(s\wedge\tau^{m,n}_{R})-X^{n}(s\wedge\tau^{m,n}_{R})\Big\|_{\mathbb{H}}^{2}\nonumber\\
		&\hspace{0.9cm}+m_{0}e^{\beta T}\mathbb{E}\int_{0}^{T}e^{-\beta s}\Big\|X^{m}(s\wedge\tau^{m,n}_{R})-X^{n}(s\wedge\tau^{m,n}_{R})\Big\|_{\mathbb{V}}^{2}ds\nonumber\\
		&\leq C_T\mathbb{E}\int_{0}^{T\wedge\tau^{m,n}_{R}}\Bigg[
		2\Big\|b(s,X^{m-1}_{s})-b(s,X^{n-1}_{s})\Big\|^2_{\mathbb{H}}
		+\Big\|\sigma(s,X^{m-1}_{s})-\sigma(s,X^{n-1}_{s})\Big\|^{2}_{L_{2}^{0}}\nonumber\\
		&\hspace{0.3cm}+\int_{U}\Big\|f(s,X^{m-1}_{s},u)-f(s,X^{n-1}_{s},u)\Big\|_{\mathbb{H}}^{2}\nu(du)
		+\Big(\int_{U}\Big\|f(s,X^{m-1}_{s},u)-f(s,X^{n-1}_{s},u)\Big\|_{\mathbb{H}}\nu(du)\Big)^{2}\Bigg]ds\nonumber\\
		&\leq C_{T,m_{0}}\int_{0}^{T}G_{R}\Big(s,\mathbb{E}\sup_{0\leq r\leq s}\Big\|X^{m-1}(r\wedge\tau^{m,n}_{R})-X^{n-1}(r\wedge\tau^{m,n}_{R})\Big\|_{\mathbb{H}}^{2}\Big)ds.\nonumber
	\end{align}
	With (H3) we have
	\begin{align*}
		\limsup_{m,n}\mathbb{E}\sup_{0\leq s\leq T}\Big\|X^{m}(s\wedge\tau^{m,n}_{R})-X^{n}(s\wedge\tau^{m,n}_{R})\Big\|_{\mathbb{H}}^{2}=0,
	\end{align*}
	and furthermore,
	\begin{align*}
		\limsup_{m,n\rightarrow\infty}\mathbb{E}\int_{0}^{T}\Big\|X^{m}(s\wedge\tau^{m,n}_{R})-X^{n}(s\wedge\tau^{m,n}_{R})\Big\|_{\mathbb{V}}^{2}ds=0.
	\end{align*}
	By Proposition \ref{prop1} we have 
	\begin{align*}
		&\hspace{-0.5cm}\limsup_n\mathbb{E}\Bigg(\sup_{t\in[-h,T]}\|X^{m}(t)-X^{n}(t)\|^2_{\mathbb{H}}\Bigg)\\
		\leq& 	\limsup_{m,n}\mathbb{E}\Bigg(\sup_{t\in[-h,T]}\Big\|X^{m}(t\wedge\tau^{m,n}_{R})-X^{n}(t\wedge\tau^{m,n}_{R})\Big\|^2_{\mathbb{H}}\Bigg)
		+	2\sup_n\mathbb{E}\Bigg(\sup_{t\in[-h,T]}\|X^{n}(t)\|^2_{\mathbb{H}}\mathbbm{1}_{\{T<\tau^{n}_{R}\}}\Bigg)\\
		\leq& \frac2{R^2}\sup_n\mathbb{E}\sup_{t\in[-h,T]}\|X^{n}(t)\|_{\mathbb{H}}^4\rightarrow  0,\qquad \mathrm{by ~letting}  ~~R\to\infty,\end{align*}
	and moreover,
	\begin{align*}
		\mathbb{E}\int_0^T\|X^{m}(t)-X^{n}(t)\|_{\mathbb{V}}^2dt\to0, \qquad \mathrm{as} ~m, n\to\infty.
	\end{align*}
	Therefore, by Propositions \ref{prop1} and Fatou's lemma, there exists a subsequence of $X^n$, denoted as $X^{n_l}$, such that as $l\to\infty$,
	\begin{align*}
		\sup_{0\le t\le T}\|X^{n_l}(t)-X(t)\|_{\mathbb{H}}\rightarrow0\quad\text{and}\quad
		\int_{0}^{T}\|X^{n_l}(s)-X(s)\|_{\mathbb{V}}^{2}ds\rightarrow0,~~~a.s..
	\end{align*}
	Set $X(t)=\phi(t)$ for $t\in[-h,0]$. Then $X\in \mathcal{D}([-h,T];\mathbb{H})$ and moreover,
	\begin{align*}
		\sup_{t\in[-h,T]}\|X(t)\|_{\mathbb{H}}^2<\infty\quad\text{and}\quad\int_0^T\|X(t)\|_{\mathbb{V}}^2dt<\infty, ~a.s..
	\end{align*}
	Therefore, one chooses $\Omega'\subset\Omega$ such that $P(\Omega')=1$ and for all $\omega_{0}\in\Omega'$,
	$$\liminf_{l\rightarrow\infty}|\eta^{n_{l}}(\cdot,\omega_{0})|_{0}^{T}=:c_0<\infty,$$
	\begin{align}\label{star}
		\lim_{l\rightarrow \infty}\sup_{-h\le t\le T}\|X^{n_{l}}(t,\omega_{0})-X(t,\omega_{0})\|_{\mathbb{H}}=0\quad\text{and}\quad\lim_{l\rightarrow\infty}\int_{0}^{T}\|X^{n_{l}}(s,\omega_{0})-X(s,\omega_{0})\|_{\mathbb{V}}^{2}ds=0.
	\end{align}
	The continuity of $\sigma$, $b$ and $f$ then implies that for all $\omega_{0}\in\Omega'$,
	\begin{align*}
		&\lim\limits_{l\rightarrow\infty}\sup_{0\leq t\leq T}\Bigg\|\int_{0}^{t}\sigma(s,X^{n_{l}}_{s}(\omega_{0}))dW(s)-\int_{0}^{t}\sigma(s,X_{s}(\omega_{0}))dW(s)\Bigg\|_{\mathbb{H}}=0,\\
		&\lim\limits_{l\rightarrow\infty}\Bigg\|\int_{0}^{T}b(s,X^{n_{l}}_{s}(\omega_{0}))ds-\int_{0}^{t}b(s,X_{s}(\omega_{0}))ds\Bigg\|_{\mathbb{H}}= 0,\\
		&\lim\limits_{l\rightarrow\infty}\sup_{0\leq t\leq T}\Bigg\|\int_{0}^{t}\int_{U}f(s,X^{n_{l}}_{s}(\omega_{0}),u)\widetilde{N}(ds,du)-\int_{0}^{t}\int_{U}f(s,X_{s}(\omega_{0}),u)\widetilde{N}(ds,du)\Bigg\|_{\mathbb{H}}=0.
	\end{align*}
	By the boundedness of $A$ in (H2),  for all $\omega_{0}\in\Omega'$, there exists a further subsequence $n_{l_{i}}$ of $n_l$ and $B\in L^{2}([0,T];\mathbb{V}^{*})$ such that
	\begin{align}\label{09}
		&A(\cdot,X^{n_{l_{i}}}(\cdot,\omega_{0}))\rightharpoonup B(\cdot,\omega_{0}),\qquad  \text{weakly in}\  L^{2}([0,T];\mathbb{V}^{*}).
	\end{align}
	By \eqref{star} we may choose a subsequence $\eta^{n_{l_{i}}}$ of $\eta^{n_{l}}$ such that $$
	\lim_{i\to\infty}|\eta^{n_{l_{i}}}(\cdot,\omega_0)|_0^T=c_0,$$ and thus 
	\begin{align}\label{090}
		\sup_{i}\sup_{0\leq t\leq T}\|\eta^{n_{l_{i}}}(t,\omega_{0})\|_{\mathbb{H}}\leq\sup_{i}|\eta^{n_{l_{i}}}(\cdot,\omega_{0})|_{0}^{T}<\infty.
	\end{align}	
	Note that, by (H1), for any $v\in L^2([0,T];\mathbb{V})$ and any $\delta>0$,
	\begin{align*}
		&2\int_0^T{}_{\mathbb{V}}\Big\langle v, A(s,X(s,\omega_{0})+\delta v)-B(s,\omega_{0})\Big\rangle_{\mathbb{V}^{*}}ds\\
		&=\frac2{\delta}\int_0^T{}_{\mathbb{V}}\Big\langle \delta v+X(s,\omega_{0})-X^{n_{l_i}}(s,\omega_{0}), A(s,X(s,\omega_{0})+\delta v)-A(s,X^{n_{l_i}}(s,\omega_{0}))\Big\rangle_{\mathbb{V}^{*}}ds\\
		&\hspace{3mm}-\frac2{\delta}\int_0^T{}_{\mathbb{V}}\Big\langle X(s,\omega_{0})-X^{n_{l_i}}(s,\omega_{0}), A(s,X(s,\omega_{0})+\delta v)-A(s,X^{n_{l_i}}(s,\omega_{0}))\Big\rangle_{\mathbb{V}^{*}}ds\\
		&\hspace{3mm}+2\int_0^T{}_{\mathbb{V}}\Big\langle v,A(s,X^{n_{l_i}}(s,\omega_{0}))-B(s,\omega_{0})\Big\rangle_{\mathbb{V}^{*}}ds\\
		&\leq\frac{2\beta}\delta\int_0^T\|X(s,\omega_{0})+\delta v-X^{n_{l_i}}(s,\omega_{0}))\|_{\mathbb{H}}^{2}ds+
		\frac{C}\delta\left(\int_0^T\|X(s,\omega_{0})-X^{n_{l_i}}(s,\omega_{0}))\|_{\mathbb{V}}^2ds\right)^{1/2}\\
		&\hspace{3mm}+2\int_0^T{}_{\mathbb{V}}\Big\langle v, A(s,X^{n_{l_i}}(s,\omega_{0}))-B(s,\omega_{0})\Big\rangle_{\mathbb{V}^{*}}ds.
	\end{align*}
	Plugging the convergence achieved earlier that $X^{n_{l_{i}}}(t,\omega_{0})$ converges to $X(t,\omega_{0})$ into equations \eqref{star} and \eqref{09} and sending $n_{l_{i}}\to\infty$, yield
	\begin{align*}
		2\int_0^T{}_{\mathbb{V}}\Big\langle v, A(s,X(s,\omega_{0})+\delta v)-B(s,\omega_{0})\Big\rangle_{\mathbb{V}^{*}}ds
		\leq {2\beta}\delta T\|v\|_{\mathbb{H}}^2.
	\end{align*}
	Sending $\delta\to0$ and noting that $A$ is hemicontinuous, we obtain for any $v\in\mathbb{V}$,
	$$
	\int_0^t {}_{\mathbb{V}}\Big\langle v, A(s,X(s,\omega_{0}))-B(s,\omega_{0})\Big\rangle_{\mathbb{V}^{*}}ds\leq 0.
	$$
	Since $v$ is arbitrary, it then follows that 
	$B(\cdot,\omega_{0})=A(\cdot,X(\cdot,\omega_{0}))$ in $L^{2}([0,T];\mathbb{V}^{*})$, 
	and moreover, by letting 
	\begin{align*}
		\eta(t,\omega_{0}):=&-X(t,\omega_{0})+\phi({0})+\int_{0}^{t}\sigma(s,X_{s}(\omega_{0}))dW(s)+\int_{0}^{t}A(s,X(s,\omega_{0}))ds\\
		&+\int_{0}^{t}b(s,X_{s}(\omega_{0}))ds+\int_{0}^{t}\int_{U}f(s,X_{s}(\omega_{0}),u)\widetilde{N}(ds,du),
	\end{align*}
	we have for all $\omega_{0}\in\Omega'$,
	\begin{align*}
		\lim_{i\rightarrow\infty}{}_{\mathbb{V}}\langle v,\eta^{n_{l_{i}}}(t,\omega_{0})\rangle_{\mathbb{V}^{*}}={}_{\mathbb{V}}\langle v,\eta(t,\omega_{0})\rangle_{\mathbb{V}^{*}},\qquad \forall v\in\mathbb{V}.
	\end{align*}	
	
	To show that $(X,\eta)$ is a solution of Eqn \eqref{see}, it remains to prove $(X(t,\omega_{0}),\eta(t,\omega_{0}))\in \Gr(\partial\varphi)$. It suffices to show that $|\eta(\cdot,\omega_{0})|_{0}^{T}<\infty$ and for any $\alpha\in \mathcal{D}([0,T];\mathbb{H})$ and any $0\leq s\leq t\leq T$,
	\begin{align}\label{810}
		\int_s^t\big\langle X(r,\omega_{0})-\alpha(r),d\eta(r,\omega_{0})\big\rangle_{\mathbb{H}}\geq \int_s^t\big[\varphi(X(r,\omega_{0}))-\varphi(\alpha(r))\big]dr.
	\end{align}
	Let $0=t_{1}<t_{2}<\cdots<t_{m}=T$ be a partition of $[0,T]$ and set $$\alpha^{m}(t):=\sum_{j=1}^{m}a_{j}\mathbbm{1}_{[t_{j},t_{j+1})}(t), \qquad a_{j}\in\mathbb{V},$$  so that
	$$\lim_{m\to\infty}\sup_{t\leq T}\|\alpha^m(t)-\alpha(t)\|_{\mathbb{H}}=0.$$
	On one hand, note that
	\begin{align*}
		\sum_{j=1}^{m}\Big\|\eta(t_{j+1},\omega_{0})-\eta(t_{j},\omega_{0})\Big\|_{\mathbb{H}}&=\sum_{j=1}^{m}\sup_{v\in\mathbb{V},\|v\|_{\mathbb{H}}\leq 1}\big|\big\langle v,\eta(t_{j+1},\omega_{0})-\eta(t_{j},\omega_{0})\big\rangle_{\mathbb{H}}\big|\\
		&\leq\sum_{j=1}^{m}\lim_{i\rightarrow\infty}\big\|\eta^{n_{l_{i}}}(t_{j+1},\omega_{0})-\eta^{n_{l_{i}}}(t_{j},\omega_{0})\big\|_{\mathbb{H}}\\
		&\leq\sup_{i}|\eta^{n_{l_{i}}}(\cdot,\omega_{0})|_{0}^{T}<\infty.
	\end{align*}
	Taking supremum over partitions of $[0,T]$ gives $$|\eta(\cdot,\omega_{0})|_{0}^{T}<\infty.$$
	On the other hand, by the lower semicontinuity of $\varphi$, for any $0\leq s\leq t\leq T$,
	\begin{align*}
		\int_s^t\big[\varphi(X(r,\omega_{0}))-\varphi(\alpha(r))\big]dr\leq& \liminf_{i\rightarrow\infty}\int_s^t\big[\varphi(X^{n_{l_{i}}}(r,\omega_{0}))-\varphi(\alpha(r))\big]dr\\
		\leq&\liminf_{i\rightarrow\infty}\int_s^t\big\langle X^{n_{l_{i}}}(r,\omega_{0}),d\eta^{n_{l_{i}}}(r,\omega_{0})\big\rangle_{\mathbb{H}}\\
		&-\liminf_{i\rightarrow\infty}\int_s^t\langle\alpha(r),d\eta^{n_{l_{i}}}(r,\omega_{0})\rangle_{\mathbb{H}}.
	\end{align*}
	Moreover, by equations \eqref{star} and \eqref{090}, 
	\begin{align}
		\label{080}
		&\hspace{-1cm}\Bigg|\int_{s}^{t}\big\langle X^{n_{l_{i}}}(r,\omega_{0}),d\eta^{n_{l_{i}}}(r,\omega_{0})\big\rangle_{\mathbb{H}}-\int_{s}^{t}\big\langle X(r,\omega_{0}),d\eta^{n_{l_{i}}}(r,\omega_{0})\big\rangle_{\mathbb{H}}\Bigg|\nonumber\\
		&\leq\sup_{s\leq r\leq t}\big\|X^{n_{l_{i}}}(r,\omega_{0})-X(r,\omega_{0})\big\|_{\mathbb{H}}\sup_{i}|\eta^{n_{l_{i}}}(\cdot,\omega_{0})|_{0}^{T}\nonumber\\
		&\leq C\sup_{s\leq r\leq t}\big\|X^{n_{l_{i}}}(r,\omega_{0})-X(r,\omega_{0})\big\|_{\mathbb{H}}\nonumber\\
		&\rightarrow0,\hspace{5cm} \text{as}\quad i\rightarrow\infty. 
	\end{align}
	Choose $N$ sufficiently large such that for all $i>N$ and all $j=1,\ldots,m$,
	\begin{align*}
		\Bigg|{}_{\mathbb{V}}\Big\langle a_j, \eta^{n_{l_{i}}}(t_{j+1},\omega_{0})-\eta^{n_{l_{i}}}(t_{j},\omega_{0})\Big\rangle_{\mathbb{V}^{*}}-{}_{\mathbb{V}}\Big\langle a_j,\eta(t_{j+1},\omega_{0})-\eta(t_{j},\omega_{0})\Big\rangle_{\mathbb{V}^{*}}\Bigg|\leq\frac{\epsilon}{2m}.
	\end{align*}
	Denote $$L:=\sup_i|\eta^{n_{l_{i}}}(\cdot,\omega_{0})|_{0}^{T}\vee|\eta(\cdot,\omega_{0})|_{0}^{T}.$$ 
	Take $m$ sufficiently large such that $$\sup_{t\leq T}\|\alpha^m(t)-\alpha(t)\|_{\mathbb{H}}<\frac{\epsilon}{4L}.$$ It then follows
	\begin{align}\label{0110}
		&\hspace{-1cm}\Bigg|\int_s^t\Big\langle\alpha(r),\;d\eta^{n_{l_{i}}}(r,\omega_{0})-d\eta(r,\omega_{0})\Big\rangle_{\mathbb{H}}\Bigg|\nonumber\\
		&\leq \Bigg|\int_s^t\Big\langle\alpha(r)-\alpha^m(r),\;d\eta^{n_{l_{i}}}(r,\omega_{0})-d\eta(r,\omega_{0}\Big\rangle_{\mathbb{H}}\Bigg|\nonumber\\
		&\quad+
		\sum_{j=1}^m\Bigg|{}_{\mathbb{V}}\Big\langle a_j,\; \eta^{n_{l_{i}}}(t_{j+1},\omega_{0})-\eta^{n_{l_{i}}}(t_{j},\omega_{0})-\eta(t_{j+1},\omega_{0})+\eta(t_{j},\omega_{0})\Big\rangle_{\mathbb{V}^{*}}\Bigg|\nonumber\\
		&\leq 2L\frac{\epsilon}{4L}+m\frac{\epsilon}{2m}\nonumber\\
		&=\epsilon.
	\end{align}
	Combining  Eqn.   (\ref{080}) and  Eqn.   (\ref{0110}), we obtain  Eqn.   (\ref{810}) and hence $(X,\eta)$ is a solution. Moreover, by Proposition \ref{prop1}, it follows that 
	\begin{align}\label{4thmom}
		\left(\mathbb{E}\sup_{-h\leq t\leq T}\|X(t)\|_{\mathbb{H}}^{4}+\mathbb{E}\left(\int_{0}^{T}\|X(t)\|_{\mathbb{V}}^{2}dt\right)^2+\mathbb{E}\left(|\eta|_{0}^{T}\right)^2\right)<\infty.
	\end{align}

	In the rest of the proof, we aim to establish the uniqueness. Suppose $(X'(t),\eta'(t))$ is another solution. Similar to Proposition \ref{prop1}, we have 
	$$\mathbb{E}\sup_{-h\leq t\leq T}(\|X(t)\|_{\mathbb{H}}^{2}\vee|X'(t)\|_{\mathbb{H}}^{2})<\infty.$$ 
	By the energy equality, we obtain
	\begin{align*}
		e^{-\beta t}\|X(t)-X'(t)\|_{\mathbb{H}}^{2}
		&=2\int_{0}^{t}e^{-\beta s}\Bigg[\Big\langle X(s)-X'(s),(\sigma(s,X_{s})-\sigma(s,X'_{s}))dW(s)\Big\rangle_{\mathbb{H}}\\
		&\hspace{3cm}+\Big\langle X(s)-X'(s),b(s,X_{s})-b(s,X'_{s})\Big\rangle_{\mathbb{H}}ds\\
		&\hspace{3cm}+{}_{\mathbb{V}}\Big\langle X(s)-X'(s), A(s,X(s))-A(s,X'(s))\Big\rangle_{\mathbb{V}^{*}}ds\\
		&\hspace{3cm}+\int_{U}\Big\langle X(s)-X'(s),f(s,X_{s},u)-f(s,X'_{s},u)\Big\rangle_{\mathbb{H}}\widetilde{N}(ds,du)\\
		&\hspace{3cm}-\Big\langle X(s)-X'(s),d\eta(s)-d\eta'(s)\Big\rangle_{\mathbb{H}}\Bigg]\\
		& +\int_{0}^{t}e^{-\beta s}\Bigg[\Big\|\sigma(s,X_{s})-\sigma(s,X'_{s})\Big\|_{L_{2}^{0}}^{2}ds-\beta\|X(s)-X'(s)\|_{\mathbb{H}}^{2}ds\\
		&\hspace{3cm}+\int_{U}\Big\|f(s,X_{s},u)-f(s,X'_{s},u)\Big\|_{\mathbb{H}}^{2}\nu(du)ds\\
		&\hspace{3cm}+\int_{U}\Big\|f(s,X_{s},u)-f(s,X'_{s},u)\Big\|_{\mathbb{H}}^{2}\widetilde{N}(ds,du)\Bigg].
	\end{align*}
	Set $$\overline{\tau}_{R}:=\inf\Big\{t\geq 0;\;\|X(t)\|_{\mathbb{H}}\vee\|X'(t)\|_{\mathbb{H}}>R\Big\}.$$ 
	By Lemma \ref{lem2}, the Burkh\"older inequality, and conditions (H1)-(H3), we have
	\begin{align*}
		\mathbb{E}\sup_{-h\leq s\leq T}\|X(s\wedge\overline{\tau}_{R})-X'(s\wedge\overline{\tau}_{R})\|_{\mathbb{H}}^{2}
		\leq C_{T}\int_{0}^{T}G_{R}\Big(s,\mathbb{E}\sup_{-h\leq r\leq s\wedge\overline{\tau}_{R}}\|X(r\wedge\overline{\tau}_{R})-X'(r\wedge\overline{\tau}_{R})\|_{\mathbb{H}}^{2}\Big)ds.
	\end{align*}
	Condition (H3) gives
	\begin{align*}
		\mathbb{E}\sup_{-h\leq s\leq T}\|X(s\wedge\overline{\tau}_{R})-X'(s\wedge\overline{\tau}_{R})\|_{\mathbb{H}}^{2}=0\quad\text{and}\quad
		\mathbb{E}\int_{0}^{T}\|X(s\wedge\overline{\tau}_{R})-X'(s\wedge\overline{\tau}_{R})\|_{\mathbb{V}}^{2}ds=0,
	\end{align*}
	which completes the proof of uniqueness 
	by letting $R\rightarrow\infty$.
	
	Finally, applying the energy equality to $\|X(t)-X(s)\|^2_{\mathbb{H}}$ yields
	\begin{align*}
		&\mathbb{E}\|X(t)-X(s)\|^2_{\mathbb{H}}\\
		&=2\mathbb{E}\int_s^t\Big\langle X(r)-X(s),b(r,X_{r})dr\Big\rangle_{\mathbb{H}}
		+2\mathbb{E}\int_s^t{}_{\mathbb{V}}\Big\langle X(r)-X(s),A(r,X(r))\Big\rangle_{\mathbb{V}^{*}}dr\\
		&\quad-2\mathbb{E}\int_s^t\Big\langle X(r)-X(s),d\eta(r)\Big\rangle_{\mathbb{H}}+\mathbb{E}\left[\int_{s}^{t}\|\sigma(r,X_{r})\|^{2}_{L_{2}^{0}}dr+\int_{U}\|f(r,X_{r},u)\|_{\mathbb{H}}^{2}\nu(du)\right]dr\\
		&\leq C\mathbb{E}\int_s^t\|X(r)-X(s)\|_{\mathbb{H}}^2dr+C(t-s)+\mathbb{E}\int_s^t\left[\varphi(X(r))-\varphi(X(s))\right]dr\\
		&\quad+\mathbb{E}\int_{s}^{t}\left[\|b(r,X_r)\|_{\mathbb{H}}^{2}+\|\sigma(r,X_{r})\|^{2}_{L_{2}^{0}}dr+\int_{U}\|f(r,X_{r},u)\|_{\mathbb{H}}^{2}\nu(du)\right]dr.
	\end{align*}
	Thus by applying donditions (H2) and (H4), together with Eqn. \eqref{4thmom}, 
	\begin{align*}
		\mathbb{E}\|X(t)-X(s)\|^4_{\mathbb{H}}
		&\leq C(t-s)^2+C\mathbb{E}\left(\int_s^t\Big[1+\|X(r)\|_{\mathbb{H}}^{l}+\|X(s)\|_{\mathbb{H}}^l\Big]dr\right)^2\\
		&\quad+C(t-s)\int_{s}^{t}H\Big(r, \mathbb{E}\sup_{r'\in[-h,T]}\|X(r')\|_{\mathbb{H}}^4\Big)dr\\
		&\leq C(t-s)^2+C(t-s))\int_{s}^{t}H(r,c_T)dr,
	\end{align*}
	where $c_T:=\mathbb{E}\sup_{t\in[-h,T]}\|X(t)\|_{\mathbb{H}}^4$.
\end{proof}			

\section*{Acknowledgments}
		The research of Jing Wu (Corresponding author) was supported by NSFCs (No. 12071493, 11871484). 


\bibliography{bib-ms}

\end{document}